\DeclarePairedDelimiter\floor{\lfloor}{\rfloor}
\DeclareMathOperator{\Li}{Li}
\renewcommand*\env@matrix[1][*\c@MaxMatrixCols c]{%
  \hskip -\arraycolsep
  \let\@ifnextchar\new@ifnextchar
  \array{#1}}
\definecolor{apricot}{rgb}{0.98, 0.81, 0.69}
\definecolor{aquamarine}{rgb}{0.5, 1.0, 0.83}
\definecolor{babyblueeyes}{rgb}{0.63, 0.79, 0.95}
\definecolor{bananamania}{rgb}{0.98, 0.91, 0.71}
\definecolor{bittersweet}{rgb}{1.0, 0.44, 0.37}
\definecolor{bluebell}{rgb}{0.64, 0.64, 0.82}
\definecolor{blush}{rgb}{0.87, 0.36, 0.51}
\definecolor{cerulean}{rgb}{0.0, 0.48, 0.65}
\definecolor{darkcyan}{rgb}{0.0, 0.55, 0.55}
\definecolor{airforceblue}{rgb}{0.36, 0.54, 0.66}
\definecolor{antiquefuchsia}{rgb}{0.57, 0.36, 0.51}
\definecolor{asparagus}{rgb}{0.53, 0.66, 0.42}
\definecolor{ballblue}{rgb}{0.13, 0.67, 0.8}
\definecolor{blueviolet}{rgb}{0.54, 0.17, 0.89}
\definecolor{brightgreen}{rgb}{0.4, 1.0, 0.0}
\definecolor{brightcerulean}{rgb}{0.11, 0.67, 0.84}
\definecolor{brightpink}{rgb}{1.0, 0.0, 0.5}
\definecolor{brightturquoise}{rgb}{0.03, 0.91, 0.87}
\definecolor{brightube}{rgb}{0.82, 0.62, 0.91}
\definecolor{brilliantlavender}{rgb}{0.98, 0.86, 1.0}
\definecolor{bluebell}{rgb}{0.64, 0.64, 0.82}
\definecolor{bluegray}{rgb}{0.4, 0.6, 0.8}
\definecolor{bluegreen}{rgb}{0.0, 0.87, 0.87}
\definecolor{brandeisblue}{rgb}{0.0, 0.44, 1.0}
\definecolor{capri}{rgb}{0.0, 0.75, 1.0}
\definecolor{fandango}{rgb}{0.71, 0.2, 0.54}
\definecolor{bubblegum}{rgb}{0.99, 0.76, 0.8}
\definecolor{brilliantrose}{rgb}{1.0, 0.33, 0.64}
\definecolor{brightmaroon}{rgb}{0.76, 0.13, 0.28}
\newtheorem{thm}{Theorem}[section]
\newtheorem{lem}[thm]{Lemma}
\newtheorem{prop}[thm]{Proposition}
\newtheorem{nrem}[thm]{Remark}
\begin{document}
\title{Matching for a family of infinite measure continued fraction transformations}
\author[Charlene Kalle]{Charlene Kalle$^\dagger$}
\author[Niels Langeveld] {Niels Langeveld$^\dagger$}
\author[Marta Maggioni]{Marta Maggioni$^\dagger$}
\author[Sara Munday]{Sara Munday$^\ddagger$}

\address[$\dagger$]{Department of Mathematics, Leiden University,
Niels Bohrweg 1, 2333CA Leiden, The Netherlands}
\email[Charlene Kalle]{kallecccj@math.leidenuniv.nl}
\email[Niels Langeveld]{n.d.s.langeveld@math.leidenuniv.nl}
\email[Marta Maggioni]{m.maggioni@math.leidenuniv.nl}
\address[$\ddagger$]{John Cabot University, Via della Lungara, 233, 00165 Roma, Italy}
\email[Sara Munday]{smunday@johncabot.edu}

\subjclass[2010]{Primary: 11K50, 37A05, 37A35, 37A40, Secondary: 11A55, 28D05, 37E05}
\keywords{continued fractions, invariant measure, Krengel entropy, infinite ergodic theory, matching}

\begin{abstract}
As a natural counterpart to Nakada's $\alpha$-continued fraction maps, we study a one-parameter family of continued fraction transformations with an indifferent fixed point. We prove that matching holds for Lebesgue almost every parameter in this family and that the exceptional set has Hausdorff dimension 1. Due to this matching property, we can construct a planar version of the natural extension for a large part of the parameter space. We use this to obtain an explicit expression for the density of the unique infinite $\sigma$-finite absolutely continuous invariant measure and to compute the Krengel entropy, return sequence and wandering rate of the corresponding maps.
\end{abstract}
\maketitle

\section{Introduction}
Over the past decades the dynamical phenomenon of matching, or synchronisation, has surfaced increasingly often in the study of the dynamics of interval maps, mostly due to the fact that systems with matching exhibit properties that resemble those of Markov maps. A map $T$ is said to have the matching property if for any discontinuity point $c$ of the map $T$ or its derivative $T'$ the orbits of the left and right limits of $c$ eventually meet. That is, there exist non-negative integers $M$ and $N$, called \textit{matching exponents}, such that
\begin{equation}\label{q:matchingdef}
T^M(c^-)=T^N(c^+),
\end{equation}
where
\[c^-=\lim_{x \uparrow c} T(x) \quad \text{and } \quad c^+= \lim_{x \downarrow c} T(x).\]
\vskip .2cm
General results on the implications of matching are scarce. There are many results however on the consequences of matching for specific families of interval maps. In \cite{KS12, BORG13, BCK,BCMP,CM18,DK} matching was considered for various families of piecewise linear maps in relation to expressions for the invariant densities, entropy and multiple tilings. Another type of transformation for which matching has proven to be convenient is for continued fraction maps, most notably for Nakada's $\alpha$-continued fraction maps. This family was introduced in \cite{Nak81} by defining for each $\alpha \in \big[\frac12,1\big]$ the map $S_\alpha:[\alpha-1, \alpha] \to [\alpha-1, \alpha]$ by $S_\alpha(0)=0$ and for $x \neq 0$,
\begin{equation}\label{q:nakada}
S_\alpha(x) = \Big| \frac1x \Big| - \Big\lfloor  \Big| \frac1x \Big| +1-\alpha  \Big\rfloor.
\end{equation}
In \cite{Nak81} Nakada constructed a planar natural extension of $S_\alpha$ and proved the existence of a unique absolutely continuous invariant probability measure. In \cite{LM08} the family was extended to include the parameters $\alpha \in \big[0, \frac12\big)$. On this part of the parameter space the planar natural extension strongly depends on the matching property, and it is much more complicated. This also affects the behaviour of the metric entropy as a function of $\alpha$, which is described in detail in \cite{LM08, NN08, CMPT10, KSS12, CT13,T14}. In \cite{DKS09,KU10, CIT18} matching was successfully considered for other families of continued fraction transformations.

\vskip .2cm
The matching behaviour of these different families has some striking similarities. The parameter space usually breaks down into maximal intervals on which the exponents $M$ and $N$ from \eqref{q:matchingdef} are constant, called \textit{matching intervals}. These matching intervals usually cover most of the space, leaving a Lebesgue null set. The set where matching fails, called the \textit{exceptional set}, is often of positive Haussdorff dimension, see \cite{CT12, KSS12, BCIT13, BCK, DK} for example.

\vskip .2cm
So far, matching has been considered only for dynamical systems with a finite absolutely continuous invariant measure. In this article, we introduce and study the matching behaviour and its consequences for a one-parameter family of continued fraction transformations on the interval that have a unique absolutely continuous, $\sigma$-finite invariant measure that is infinite. This family of \textit{flipped $\alpha$-continued fraction transformations} we introduce arises naturally as a counterpart to Nakada's $\alpha$-continued fraction maps. Due to matching we obtain a nice planar natural extension on a large part of the parameter space, which allows us to explicitly compute dynamical features of the maps, such as the invariant density, Krengel entropy and wandering rate.

\vskip .2cm
The family of maps $\{ T_\alpha \}_{\alpha \in (0,1)}$ we consider is defined as follows. For each $\alpha \in (0,1)$ let
\begin{equation}\label{q:Dalpha}
 D_\alpha = \bigcup_{n \ge 1} \Big[ \frac1{n+\alpha}, \frac 1n \Big] \subseteq [0,1],
 \end{equation}
and $I_\alpha := [\min\{\alpha, 1-\alpha\},1]$, and define the map $T_\alpha: I_\alpha  \to I_\alpha$ by
\begin{equation}\nonumber
T_\alpha(x)= \left\{
  \begin{array}{l l}
    G(x) =  \displaystyle \frac{1}{x}-\left\lfloor\frac{1}{x}\right\rfloor,		& \text{if }x \in D_\alpha^{\mathsf{c}},\\[0.5cm]
    1-G(x) =  \displaystyle -\frac{1}{x} + \left(1+\left\lfloor\frac{1}{x}\right\rfloor\right),	&\text{if } x\in D_\alpha,
\end{array}  \right.
\end{equation}
where $G(x) = \frac1x \pmod 1$ is the {\em Gauss map} and $D_\alpha^{\mathsf{c}}$ denotes the complement of $D_{\alpha}$ in $[0,1]$. Note that for $\alpha=0$ one recovers the Gauss map $G$, and $\alpha=1$ gives $1-G$, which is a shifted version of the {\em R\'enyi map} or {\em backwards continued fraction map}. Since these transformations have already been studied extensively, we omit them from our analysis. Figures~\ref{f:off}(c) and~\ref{f:off}(f) show the graphs of the maps $T_{\alpha}$ for a parameter $\alpha < \frac12$ and a parameter $\alpha> \frac12$, respectively. We could define $T_\alpha$ on the whole interval $[0,1]$, but since the dynamics of $T_{\alpha}$ is attracted to the interval $I_\alpha$ we just take that as the domain. Since $I_\alpha$ is bounded away from 0, any map $T_\alpha$ has only a finite number of branches. Note also that each map $T_\alpha$ has an indifferent fixed point at $1$.

\vskip .2cm
We call these transformations flipped $\alpha$-continued fraction maps, due to their relation to the family of maps described in \cite{MMY97}. The authors defined for each $\alpha \in [0,1]$ the {\em folded $\alpha$-continued fraction map} $\hat S_\alpha: [0, \max\{\alpha, 1-\alpha \}] \to [0, \max \{ \alpha, 1-\alpha \}]$ by $\hat S_\alpha(0)=0$ and for $x \neq 0$,
\[ \hat S_\alpha (x) = \Big| \frac1x - \Big\lfloor \frac1x +1-\alpha \Big\rfloor \Big| = \left\{
  \begin{array}{l l}
    G(x) =  \displaystyle \frac{1}{x}-\left\lfloor\frac{1}{x}\right\rfloor,		& \text{if }x \in D_\alpha,\\[0.5cm]
    1-G(x) =  \displaystyle -\frac{1}{x} + \left(1+\left\lfloor\frac{1}{x}\right\rfloor\right),	&\text{if } x \in D_\alpha^{\mathsf{c}}.
\end{array}  \right. \]
The dynamical properties of the folded $\alpha$-continued fraction maps are essentially equal to those of Nakada's $\alpha$-continued fraction maps. The name represents the idea that these maps `fold' the interval $[\alpha-1,\alpha]$ onto $[0, \max\{ \alpha, 1-\alpha\}]$. As shown in Figure~\ref{f:off} the families $\{ T_\alpha\}$ and $\{ \hat S_\alpha\}$ are obtained by flipping the Gauss map on complementary parts of the unit interval and, as such, both families are particular instances of what are called {\em $D$-continued fraction maps} in \cite{DHKM12}. Furthermore, for $\alpha=\frac12$ the transformation $T_\alpha$ seems to be closely related, but not isomorphic, to the object of study of \cite{DK00}.

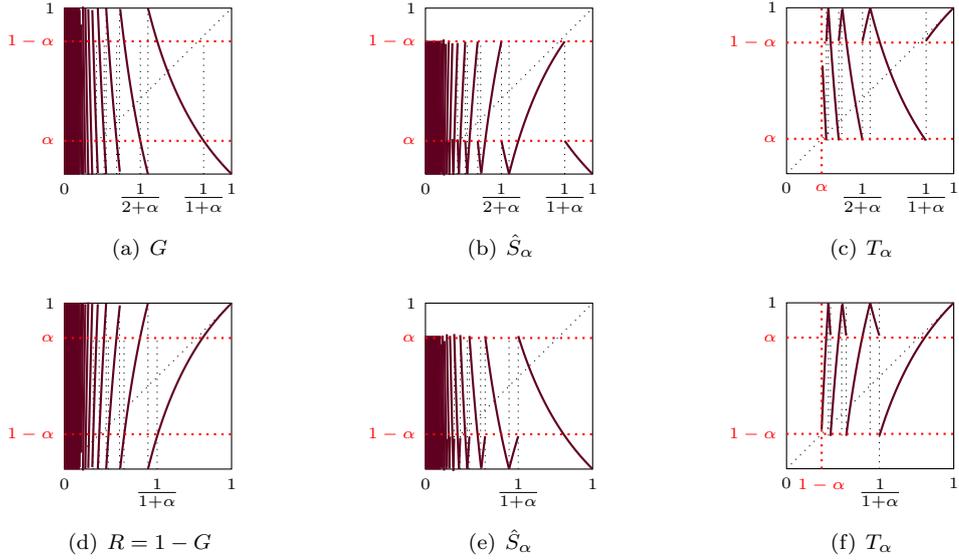
\begin{figure}[h]
\centering
\subfigure[$G$]{\begin{tikzpicture}[scale=2.2]
\draw[white] (-.6,0)--(1.5,0);
\draw(0,0)node[below]{\tiny $0$}--(1,0)node[below]{\tiny $1$}--(1,1)--(0,1)node[left]{\tiny $1$}--(0,0);
 \draw[dotted] (.5,0)--(.5,1);
  \draw[dotted] (.33,0)--(.33,1);
  \draw[dotted] (.25,0)--(.25,1);
 \draw[dotted] (0,0)--(1,1);
 \draw[dotted] (.833,0)node[below]{\small $\frac1{1+\alpha}$}--(.833,.8);
 \draw[dotted] (.4545,0)node[below]{\small $\frac1{2+\alpha}$}--(.4545,.8);
 \draw[dotted] (.3125,0)--(.3125,.8)(.238,0)--(.238,.8)(.192,0)--(.192,.8);

 \draw[thick, purple!50!black, smooth, samples =20, domain=.5:1] plot(\x,{1 / \x -1});
\draw[thick, purple!50!black, smooth, samples =20, domain=.334:.5] plot(\x,{1 /\x -2});
\draw[thick, purple!50!black, smooth, samples =20, domain=.25:.332] plot(\x,{1 /\x -3});
\draw[thick, purple!50!black, smooth, samples =20, domain=.2:.25] plot(\x,{1 /\x -4});
\draw[thick, purple!50!black, smooth, samples =20, domain=.167:.2] plot(\x,{1 /\x -5});
\draw[thick, purple!50!black, smooth, samples =20, domain=.143:.167] plot(\x,{1 /\x -6});
\draw[thick, purple!50!black, smooth, samples =20, domain=.125:.143] plot(\x,{1 /\x -7});
\draw[thick, purple!50!black, smooth, samples =20, domain=.111:.125] plot(\x,{1 /\x -8});
\draw[thick, purple!50!black, smooth, samples =20, domain=.101:.1112] plot(\x,{1 /\x -9});
\draw[thick, purple!50!black, smooth, samples =20, domain=.091:.1] plot(\x,{1 /\x -10});
\filldraw[purple!50!black] (0,0) rectangle (.09,1);
\draw[thick, red, dotted](0,.2)node[left]{\tiny $\alpha$}--(1,.2);
\draw[thick, red, dotted](0,.8)node[left]{\tiny $1-\alpha$}--(1,.8);
\end{tikzpicture}}
\subfigure[$\hat S_\alpha$]{\begin{tikzpicture}[scale=2.2]
\draw[white] (-.6,0)--(1.5,0);
\draw(0,0)node[below]{\tiny $0$}--(1,0)node[below]{\tiny $1$}--(1,1)--(0,1)node[left]{\tiny $1$}--(0,0);
 \draw[dotted] (.5,0)--(.5,.8);
  \draw[dotted] (.33,0)--(.33,.8);
  \draw[dotted] (.25,0)--(.25,.8);
 \draw[dotted] (0,0)--(1,1);
  \draw[dotted] (.833,0)node[below]{\small $\frac1{1+\alpha}$}--(.833,.8);
 \draw[dotted] (.4545,0)node[below]{\small $\frac1{2+\alpha}$}--(.4545,.8);
 \draw[dotted] (.3125,0)--(.3125,.8)(.238,0)--(.238,.8)(.192,0)--(.192,.8);

\draw[thick, purple!50!black, smooth, samples =20, domain=.833:1] plot(\x,{1 / \x -1});
\draw[thick, purple!50!black, smooth, samples =20, domain=.5:.833] plot(\x,{2-1 / \x });

\draw[thick, purple!50!black, smooth, samples =20, domain=.4545:.5] plot(\x,{1 /\x -2});
\draw[thick, purple!50!black, smooth, samples =20, domain=.334:.4545] plot(\x,{3-1 /\x});

\draw[thick, purple!50!black, smooth, samples =20, domain=.3125:.332] plot(\x,{1 /\x -3});
\draw[thick, purple!50!black, smooth, samples =20, domain=.25:.3125] plot(\x,{4-1 /\x });

\draw[thick, purple!50!black, smooth, samples =20, domain=.238:.25] plot(\x,{1 /\x -4});
\draw[thick, purple!50!black, smooth, samples =20, domain=.2:.238] plot(\x,{5-1 /\x });

\draw[thick, purple!50!black, smooth, samples =20, domain=.192:.2] plot(\x,{1 /\x -5});
\draw[thick, purple!50!black, smooth, samples =20, domain=.167:.192] plot(\x,{6-1 /\x });

\draw[thick, purple!50!black, smooth, samples =20, domain=.161:.167] plot(\x,{1 /\x -6});
\draw[thick, purple!50!black, smooth, samples =20, domain=.143:.161] plot(\x,{7-1 /\x });

\draw[thick, purple!50!black, smooth, samples =20, domain=.139:.143] plot(\x,{1 /\x -7});
\draw[thick, purple!50!black, smooth, samples =20, domain=.125:.139] plot(\x,{8-1 /\x });

\draw[thick, purple!50!black, smooth, samples =20, domain=.122:.125] plot(\x,{1 /\x -8});
\draw[thick, purple!50!black, smooth, samples =20, domain=.111:.122] plot(\x,{9-1 /\x });

\draw[thick, purple!50!black, smooth, samples =20, domain=.109:.1112] plot(\x,{1 /\x -9});
\draw[thick, purple!50!black, smooth, samples =20, domain=.101:.109] plot(\x,{10-1 /\x });

\draw[thick, purple!50!black, smooth, samples =20, domain=.098:.1] plot(\x,{1 /\x -10});
\draw[thick, purple!50!black, smooth, samples =20, domain=.091:.098] plot(\x,{11-1 /\x });

\filldraw[purple!50!black] (0,0) rectangle (.09,.8);
\draw[thick, red, dotted](0,.2)node[left]{\tiny $\alpha$}--(1,.2);
\draw[thick, red, dotted](0,.8)node[left]{\tiny $1-\alpha$}--(1,.8);
\end{tikzpicture}}
\subfigure[$T_\alpha$]{\begin{tikzpicture}[scale=2.2]
\draw[white] (-.6,0)--(1.5,0);
\draw(0,0)node[below]{\tiny $0$}--(1,0)node[below]{\tiny $1$}--(1,1)--(0,1)node[left]{\tiny $1$}--(0,0);
 \draw[dotted] (.5,.2)--(.5,1);
  \draw[dotted] (.33,.2)--(.33,1);
  \draw[dotted] (.25,.2)--(.25,1);
 \draw[dotted] (0,0)--(1,1);
  \draw[dotted] (.835,.2)--(.835,1);
  \node at (.835,-.15){\small $\frac1{1+\alpha}$};
 \draw[dotted] (.4545,.2)--(.4545,1);
 \node at (.4545,-.15){\small $\frac1{2+\alpha}$};
 \draw[dotted] (.3125,.2)--(.3125,1)(.238,.2)--(.238,1);

\draw[thick, purple!50!black, smooth, samples =20, domain=.833:1] plot(\x,{2-1 / \x });
\draw[thick, purple!50!black, smooth, samples =20, domain=.5:.833] plot(\x,{1 / \x -1});

\draw[thick, purple!50!black, smooth, samples =20, domain=.4545:.5] plot(\x,{3-1 /\x });
\draw[thick, purple!50!black, smooth, samples =20, domain=.334:.4545] plot(\x,{1 /\x-2});

\draw[thick, purple!50!black, smooth, samples =20, domain=.3125:.332] plot(\x,{4-1 /\x });
\draw[thick, purple!50!black, smooth, samples =20, domain=.25:.3125] plot(\x,{1 /\x -3});

\draw[thick, purple!50!black, smooth, samples =20, domain=.238:.25] plot(\x,{5-1 /\x });
\draw[thick, purple!50!black, smooth, samples =20, domain=.215:.238] plot(\x,{1 /\x -4});

\draw[thick, red, dotted](0,.21)node[left]{\tiny $\alpha$}--(1,.21);
\draw[thick, red, dotted](0,.79)node[left]{\tiny $1-\alpha$}--(1,.79);
\draw[thick, red, dotted] (.21,0)node[below]{\tiny $\alpha$} -- (.21,1);
\end{tikzpicture}}
\subfigure[$R=1-G$]{\begin{tikzpicture}[scale=2.2]
\draw[white] (-.6,0)--(1.5,0);
\draw(0,0)node[below]{\tiny $0$}--(1,0)node[below]{\tiny $1$}--(1,1)--(0,1)node[left]{\tiny $1$}--(0,0);
 \draw[dotted] (.5,0)--(.5,1);
  \draw[dotted] (.33,0)--(.33,1);
  \draw[dotted] (.25,0)--(.25,1);
 \draw[dotted] (0,0)--(1,1);
 \draw[dotted] (.555,0)node[below]{\small $\frac1{1+\alpha}$}--(.555,.8);
 \draw[dotted] (.357,0)--(.357,.8);
 \draw[dotted] (.263,0)--(.263,.8)(.208,0)--(.208,.8)(.172,0)--(.172,.8);

 \draw[thick, purple!50!black, smooth, samples =20, domain=.5:1] plot(\x,{2-1 / \x });
\draw[thick, purple!50!black, smooth, samples =20, domain=.334:.5] plot(\x,{3-1 /\x });
\draw[thick, purple!50!black, smooth, samples =20, domain=.25:.332] plot(\x,{4-1 /\x });
\draw[thick, purple!50!black, smooth, samples =20, domain=.2:.25] plot(\x,{5-1 /\x });
\draw[thick, purple!50!black, smooth, samples =20, domain=.167:.2] plot(\x,{6-1 /\x });
\draw[thick, purple!50!black, smooth, samples =20, domain=.143:.167] plot(\x,{7-1 /\x });
\draw[thick, purple!50!black, smooth, samples =20, domain=.125:.143] plot(\x,{8-1 /\x });
\draw[thick, purple!50!black, smooth, samples =20, domain=.111:.125] plot(\x,{9-1 /\x });
\draw[thick, purple!50!black, smooth, samples =20, domain=.101:.1112] plot(\x,{10-1 /\x });
\draw[thick, purple!50!black, smooth, samples =20, domain=.091:.1] plot(\x,{11-1 /\x });

\filldraw[purple!50!black] (0,0) rectangle (.09,1);
\draw[thick, red, dotted](0,.21)node[left]{\tiny $1-\alpha$}--(1,.21);
\draw[thick, red, dotted](0,.79)node[left]{\tiny $\alpha$}--(1,.79);
\end{tikzpicture}}
\subfigure[$\hat S_\alpha$]{\begin{tikzpicture}[scale=2.2]
\draw[white] (-.6,0)--(1.5,0);
\draw(0,0)node[below]{\tiny $0$}--(1,0)node[below]{\tiny $1$}--(1,1)--(0,1)node[left]{\tiny $1$}--(0,0);
 \draw[dotted] (.5,0)--(.5,.8);
  \draw[dotted] (.33,0)--(.33,.8);
  \draw[dotted] (.25,0)--(.25,.8);
 \draw[dotted] (0,0)--(1,1);
  \draw[dotted] (.555,0)node[below]{\small $\frac1{1+\alpha}$}--(.555,.8);
 \draw[dotted] (.357,0)--(.357,.8);
 \draw[dotted] (.263,0)--(.263,.8)(.208,0)--(.208,.8)(.172,0)--(.172,.8);

\draw[thick, purple!50!black, smooth, samples =20, domain=.555:1] plot(\x,{1 / \x -1});
\draw[thick, purple!50!black, smooth, samples =20, domain=.5:.555] plot(\x,{2-1 / \x });

\draw[thick, purple!50!black, smooth, samples =20, domain=.357:.5] plot(\x,{1 /\x -2});
\draw[thick, purple!50!black, smooth, samples =20, domain=.334:.357] plot(\x,{3-1 /\x});

\draw[thick, purple!50!black, smooth, samples =20, domain=.263:.332] plot(\x,{1 /\x -3});
\draw[thick, purple!50!black, smooth, samples =20, domain=.25:.263] plot(\x,{4-1 /\x });

\draw[thick, purple!50!black, smooth, samples =20, domain=.208:.25] plot(\x,{1 /\x -4});
\draw[thick, purple!50!black, smooth, samples =20, domain=.2:.208] plot(\x,{5-1 /\x });

\draw[thick, purple!50!black, smooth, samples =20, domain=.172:.2] plot(\x,{1 /\x -5});
\draw[thick, purple!50!black, smooth, samples =20, domain=.167:.172] plot(\x,{6-1 /\x });

\draw[thick, purple!50!black, smooth, samples =20, domain=.147:.167] plot(\x,{1 /\x -6});
\draw[thick, purple!50!black, smooth, samples =20, domain=.143:.147] plot(\x,{7-1 /\x });

\draw[thick, purple!50!black, smooth, samples =20, domain=.128:.143] plot(\x,{1 /\x -7});
\draw[thick, purple!50!black, smooth, samples =20, domain=.125:.128] plot(\x,{8-1 /\x });

\draw[thick, purple!50!black, smooth, samples =20, domain=.114:.125] plot(\x,{1 /\x -8});
\draw[thick, purple!50!black, smooth, samples =20, domain=.111:.114] plot(\x,{9-1 /\x });

\draw[thick, purple!50!black, smooth, samples =20, domain=.102:.1112] plot(\x,{1 /\x -9});
\draw[thick, purple!50!black, smooth, samples =20, domain=.101:.102] plot(\x,{10-1 /\x });

\draw[thick, purple!50!black, smooth, samples =20, domain=.093:.1] plot(\x,{1 /\x -10});
\draw[thick, purple!50!black, smooth, samples =20, domain=.091:.093] plot(\x,{11-1 /\x });

\filldraw[purple!50!black] (0,0) rectangle (.09,.8);
\draw[thick, red, dotted](0,.21)node[left]{\tiny $1-\alpha$}--(1,.21);
\draw[thick, red, dotted](0,.79)node[left]{\tiny $\alpha$}--(1,.79);
\end{tikzpicture}}
\subfigure[$T_\alpha$]{\begin{tikzpicture}[scale=2.2]
\draw[white] (-.6,0)--(1.5,0);
\draw(0,0)node[below]{\tiny $0$}--(1,0)node[below]{\tiny $1$}--(1,1)--(0,1)node[left]{\tiny $1$}--(0,0);
 \draw[dotted] (.5,.2)--(.5,1);
  \draw[dotted] (.33,.2)--(.33,1);
  \draw[dotted] (.25,.2)--(.25,1);
 \draw[dotted] (0,0)--(1,1);
  \draw[dotted] (.555,0)--(.555,1);
  \node at (.56,-.15){\small $\frac1{1+\alpha}$};
 \draw[dotted] (.357,.2)--(.357,1);

 \draw[dotted] (.263,.2)--(.263,1);

\draw[thick, purple!50!black, smooth, samples =20, domain=.555:1] plot(\x,{2-1 / \x });
\draw[thick, purple!50!black, smooth, samples =20, domain=.5:.555] plot(\x,{1 / \x -1});

\draw[thick, purple!50!black, smooth, samples =20, domain=.357:.5] plot(\x,{3-1 /\x });
\draw[thick, purple!50!black, smooth, samples =20, domain=.334:.357] plot(\x,{1 /\x-2});

\draw[thick, purple!50!black, smooth, samples =20, domain=.263:.332] plot(\x,{4-1 /\x });
\draw[thick, purple!50!black, smooth, samples =20, domain=.25:.263] plot(\x,{1 /\x -3});

\draw[thick, purple!50!black, smooth, samples =20, domain=.21:.25] plot(\x,{5-1 /\x });

\draw[thick, red, dotted](0,.21)node[left]{\tiny $1-\alpha$}--(1,.21);
\draw[thick, red, dotted](0,.79)node[left]{\tiny $\alpha$}--(1,.79);
\draw[thick, red, dotted] (.21,0)node[below]{\tiny $1-\alpha$} -- (.21,1);
\end{tikzpicture}}
\caption{The Gauss map $G$ and the flipped map $R=1-G$ in (a) and (d). The folded $\alpha$-continued fraction map $\hat S_\alpha$ and the flipped $\alpha$-continued fraction map $T_\alpha$ for $\alpha < \frac12$ in (b) and (c) and for $\alpha > \frac12$ in (e) and (f).}
\label{f:off}
\end{figure}

\vskip .2cm
The first main result of this article is on the matching behaviour of the family $T_\alpha$.

\begin{thm}\label{t:main2}
The set of parameters $\alpha \in (0,1)$ for which the transformation $T_\alpha$ does not have matching is a Lebesgue null set of full Hausdorff dimension.
\end{thm}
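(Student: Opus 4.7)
The only discontinuities of $T_\alpha$ in the interior of $I_\alpha$ occur at the points $1/(n+\alpha)$, and at each of these the left and right limits of $T_\alpha$ equal $\alpha$ and $1-\alpha$ respectively, independently of $n$. Consequently, matching for $T_\alpha$ is equivalent to the existence of non-negative integers $M, N$ with $T_\alpha^M(\alpha) = T_\alpha^N(1-\alpha)$, and my plan is to track the two orbits $\{T_\alpha^k(\alpha)\}_k$ and $\{T_\alpha^k(1-\alpha)\}_k$ jointly as $\alpha$ varies in $(0,1)$.

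The first step is a symbolic stratification of parameter space. Attach to each $\alpha$ the pair of continued fraction digit sequences produced by the two critical orbits; these sequences are locally constant in $\alpha$, and the maximal open intervals on which matching holds with constant exponents $(M,N)$ are the \emph{matching intervals}. Following the philosophy developed for Nakada's family in \cite{LM08, NN08, CT12, KSS12}, the endpoints of a matching interval correspond to parameters where an iterate of one of the critical orbits lands on another critical point or on the indifferent fixed point $1$, and admissible matching intervals can be parametrized by finite words in the digits. To prove that the exceptional set $\mathcal{E} \subset (0,1)$ is Lebesgue null, I would estimate the total length of the complementary matching intervals. The coupling between phase-space and parameter-space dynamics allows one to transfer the expansion $|T_\alpha'(x)| = 1/x^2 > 1$ (valid for $x<1$) to an expansion in $\alpha$, producing a covering of $\mathcal{E}$ by pullbacks of small neighbourhoods of $1$ whose total length tends to zero.

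The main obstacle in the null-set argument is the $\alpha$-dependence of $T_\alpha$ itself: the inverse branches used to transfer expansion back to parameter space vary with $\alpha$, so the transfer is only uniform locally. I would address this by working in small parameter windows in which the relevant inverse branches depend smoothly on $\alpha$ and then summing the resulting local estimates. For the Hausdorff dimension statement, the strategy is to construct inside $\mathcal{E}$ a family of Cantor sets of dimension arbitrarily close to $1$, exploiting the neutral behaviour at $1$: inverse branches of $T_\alpha$ corresponding to long sequences of iterates near $1$ have derivatives tending to $1$, so a Moran-type iterated function system based on admissible symbolic sequences along which the two critical orbits never synchronise produces Cantor subsets of $\mathcal{E}$ whose dimension approaches $1$. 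The delicate point here is verifying the admissibility of these sequences and checking that they correspond to genuinely non-matching parameters; this requires a combinatorial analysis particular to the flipped $\alpha$-continued fraction setting, and I expect it to be the technically hardest step of the proof.
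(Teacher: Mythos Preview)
Your proposal is a plan rather than a proof, and the route you sketch diverges substantially from the paper's argument, which is both shorter and avoids the obstacles you flag as hardest.

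The paper does \emph{not} attack the measure and dimension statements directly. Instead it first proves a structural result (Theorem~\ref{t:acf}): the matching intervals of the flipped family $\{T_\alpha\}$ on $(\tfrac12,1)$ are exactly the images, under the M\"obius map $x\mapsto \tfrac{1}{1+x}$, of the maximal quadratic intervals that index matching for Nakada's $\alpha$-continued fractions. Since $x\mapsto \tfrac{1}{1+x}$ is bi-Lipschitz on $(0,1)$, the exceptional set $\mathcal E$ is contained in the image $E=\tfrac{1}{1+\mathcal N}$ of Nakada's exceptional set $\mathcal N$, and $\lambda(\mathcal N)=0$, $\dim_H(\mathcal N)=1$ are already known from \cite{CT12,KSS12}. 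This immediately gives $\lambda(\mathcal E)=0$; for $\dim_H(\mathcal E)=1$ one only has to check that $E\setminus\mathcal E$ is countable (ultimately periodic continued fractions), which is a short parity argument using Lemma~\ref{l:reverserenyi}. The entire interval $(0,\tfrac12)$ is dispatched in one line by Proposition~\ref{p:matching1}.

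Your direct approach has two genuine gaps. First, the ``transfer of expansion'' argument for $\lambda(\mathcal E)=0$ relies on $|T_\alpha'(x)|=1/x^2>1$, but this bound degenerates to $1$ at the indifferent fixed point; you acknowledge the $\alpha$-dependence issue but not the lack of uniform hyperbolicity, which is what actually obstructs a naive covering argument. Second, and more seriously, your proposed mechanism for full Hausdorff dimension---building Cantor sets from long sojourns near the neutral point~$1$---does not match the actual structure of $\mathcal E$. The paper shows that non-matching parameters are characterised (via $\alpha=\tfrac{1}{1+a}$) by the symbolic condition $G^n(a)\ge a$ for all $n$, i.e., the orbits of $\alpha$ and $1-\alpha$ never leave $D_\alpha$. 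This is a combinatorial constraint on the regular continued fraction digits of $a$, not a condition about proximity to~$1$; your Moran-type construction would need to encode this admissibility, and it is not clear that branches with near-identity derivative correspond to non-matching parameters at all. The missing idea is precisely the reduction to Nakada's family, which converts both questions into citations.
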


We also give an explicit description of the matching intervals by relating them to the matching intervals of Nakada's $\alpha$-continued fraction transformations. The matching behaviour allows us to construct a planar version of the natural extension for $\alpha \in (0, \frac12 \sqrt{2})$ leading to the following result.

\begin{thm}\label{t:main1}
Let $0 \leq \alpha \leq \frac{1}{2}\sqrt{2}$, let $\mathcal{B}_\alpha$ be the Borel $\sigma$-algebra on $I_\alpha$ and let $g:=\frac{\sqrt{5}-1}{2}$. The absolutely continuous measure $\mu_\alpha$ on $(I_\alpha, \mathcal{B}_\alpha)$ with density
\begingroup
\renewcommand*{\arraystretch}{2.2}
\[
f_\alpha (x)  \hspace{-0.15em} = \hspace{-0.2em} \left\{ \hspace{-0.7em}
\begin{array}{l l}
\frac{1}{x} \mathbf{1}_{[\alpha, \frac{\alpha}{1-\alpha}]} (x) + \frac1{1+x} \mathbf{1}_{[\frac{\alpha}{1-\alpha},1-\alpha]} (x)  + \frac{2}{1-x^2} \mathbf{1}_{[1-\alpha,1]}(x),  & \text{for } \alpha\in[0,\frac{1}{2}),\\

\frac{1}{1-x} \mathbf 1_{[1-\alpha, \alpha]}  (x) + \frac1{x(1-x)} \mathbf 1_{[\alpha, \frac{1-\alpha}{\alpha} ]} (x)  + \frac{x^2+1}{x(1-x^2)} \mathbf 1_{[\frac{1-\alpha}{\alpha},1]} (x),  & \text{for } \alpha\in [\frac{1}{2}, g),\\

(\frac{1}{1-x} + \frac{1}{x+\frac{1}{g-1}})\mathbf 1_{[1-\alpha, \frac{2\alpha-1}{\alpha}]}  (x) + \frac1{1-x} \mathbf 1_{[\frac{2\alpha-1}{\alpha}, \alpha ]}  (x) +& \\
+ (\frac{1}{1-x} + \frac{1}{x} -\frac{1}{x+\frac{1}{g}}) \mathbf 1_{[\alpha,\frac{2\alpha-1}{1-\alpha}]}  (x) +  \frac{x^2+1}{x(1-x^2)} \mathbf 1_{[\frac{2\alpha-1}{1-\alpha},1]}  (x),  & \text{for } \alpha\in [g, \frac{2}{3}),\\
(\frac{1}{1-x} + \frac{1}{x+\frac{1}{g-1}})\mathbf 1_{[1-\alpha, \frac{2\alpha-1}{\alpha}]}  (x)  + \frac1{1-x} \mathbf 1_{[\frac{2\alpha-1}{\alpha}, \alpha ]}  (x)  + &\\
+ (\frac{1}{1-x} + \frac{1}{x} -\frac{1}{x+\frac{1}{g}}) \mathbf 1_{[\alpha,\frac{1-\alpha}{2\alpha-1}]} (x)+ &\\
 +(\frac{1}{1-x}+\frac{1}{x+1} -\frac{1}{x+ \frac1g} + \frac{1}{x} - \frac{1}{x+\frac{1}{g+1}}) \mathbf 1_{[\frac{1-\alpha}{2\alpha-1},1]} (x),  &  \text{for } \alpha\in [\frac{2}{3}, \frac{1}{2}\sqrt{2}),\\
\end{array}\right.
\]
\endgroup
is the unique (up to scalar multiplication) $\sigma$-finite, infinite absolutely continuous invariant measure for $T_\alpha$. Furthermore, for $\alpha\in(0,g]$ the Krengel entropy equals $\frac{\pi^2}{6}$. For $\alpha\in(0,\frac{1}{2}\sqrt{2})$ the wandering rate is given by  $w_n(T_\alpha) \sim \log n$ and the return sequence by $a_n(T_\alpha) \sim \frac{n}{\log n}$.
\end{thm}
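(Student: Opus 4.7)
The plan is to derive the density $f_\alpha$, and with it the remaining dynamical quantities in the statement, from a two-dimensional natural extension $(\Omega_\alpha,\mathcal{T}_\alpha,\bar\mu_\alpha)$ whose construction is made tractable by the matching of Theorem~\ref{t:main2}. Concretely, I would set $\mathcal{T}_\alpha(x,y)=(T_\alpha(x),\Psi_\alpha(x,y))$, where $\Psi_\alpha$ is the M\"obius action on the $y$-coordinate dictated by the two cylinder types (Gauss branch and flipped branch) of $T_\alpha$, and endow a domain $\Omega_\alpha\subset I_\alpha\times[-1,1]$ with the measure $d\bar\mu_\alpha=\frac{dx\,dy}{(1+xy)^2}$. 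The crucial point is that the upper and lower boundaries of $\Omega_\alpha$ are traced out by forward $\mathcal{T}_\alpha$-orbits of the pre-images of the endpoints $\alpha$ and $1-\alpha$; by Theorem~\ref{t:main2} these orbits coincide after finitely many steps on matching intervals, so $\Omega_\alpha$ can be taken to be a finite union of horizontal strips $I_j\times[c_j,d_j]$. The four cases in the statement ($\alpha<\tfrac12$, $\tfrac12\le\alpha<g$, $g\le\alpha<\tfrac23$, $\tfrac23\le\alpha<\tfrac{\sqrt2}{2}$) will correspond to the combinatorially distinct shapes $\Omega_\alpha$ takes as $\alpha$ grows and successive matching orbits overlap.

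Once $(\Omega_\alpha,\mathcal{T}_\alpha,\bar\mu_\alpha)$ is in hand, invariance of $\bar\mu_\alpha$ is a Jacobian check (the form $(1+xy)^{-2}\,dx\,dy$ is preserved by any real M\"obius action applied to the $y$-coordinate in the standard way), and the fibre integrals
\[
\int_{c_j}^{d_j}\frac{dy}{(1+xy)^{2}}=\frac{1}{x}\Big(\frac{1}{1+xc_j}-\frac{1}{1+xd_j}\Big)
\]
will reproduce, strip by strip, the piecewise expressions for $f_\alpha$. The infiniteness of $\mu_\alpha$ follows from the $(1-x)^{-1}$ behaviour at the indifferent fixed point~$1$. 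Uniqueness then reduces to conservativity and ergodicity of $T_\alpha$: conservativity is immediate since every sweep-out set bounded away from $1$ has finite $\mu_\alpha$-measure, and ergodicity is inherited from that of the natural extension, which in turn follows from a Knopp-type bounded-distortion argument on $T_\alpha$-cylinders away from~$1$.

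For the Krengel entropy I would apply Rokhlin's formula on a sweep-out set and use Abramov's formula for the induced transformation, reducing the problem to
\[
h_{\mathrm{Kr}}(T_\alpha)=\int_{I_\alpha}\log|T_\alpha'(x)|\,d\mu_\alpha(x)=2\int_{I_\alpha}\log(1/x)\,f_\alpha(x)\,dx.
\]
For $\alpha\in(0,g]$ the density $f_\alpha$ is a sum of Gauss-type pieces $\tfrac1x$, $\tfrac1{1+x}$, $\tfrac1{1-x}$ and $\tfrac1{x(1-x)}$, each of which integrates against $\log(1/x)$ to a combination of dilogarithms at the endpoints of the corresponding $I_j$; these are expected to telescope, via the identity $\operatorname{Li}_2(1)=\pi^2/6$ together with the reflection and duplication formulas for $\operatorname{Li}_2$, to the $\alpha$-independent value $\pi^2/6$, mirroring the fact that matching ties the natural extension of $T_\alpha$ to that of the Gauss map on the relevant range. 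The last pair of asymptotics is read off the indifferent fixed point at~$1$: expanding $T_\alpha(x)=2-1/x$ gives $T_\alpha(x)=x-(1-x)^{2}+O((1-x)^{3})$, so $1$ is neutral of quadratic Pomeau--Manneville type, and $f_\alpha$ has a simple pole there. Thaler's uniform dual ergodic theorem for AFN-maps with a single neutral fixed point of order~$2$ then yields $w_n(T_\alpha)\sim \log n$ and $a_n(T_\alpha)\sim n/\log n$.

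I expect the main obstacle to be Step~1 in the two largest parameter regimes $[g,\tfrac23)$ and $[\tfrac23,\tfrac{\sqrt2}{2})$, where the forward orbits of $\alpha$ and $1-\alpha$ wind around the flipped branches several times before matching and the domain $\Omega_\alpha$ acquires extra strips whose heights involve the golden mean~$g$. Tracking this combinatorics faithfully is precisely what forces the splitting into the four cases in the statement and the appearance of the terms $\tfrac{1}{x+1/g}$ and $\tfrac{1}{x+1/(g\pm 1)}$ in the density.
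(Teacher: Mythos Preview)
Your strategy coincides with the paper's: construct the planar natural extension $\mathcal{T}_\alpha(x,y)=(T_\alpha(x),\,\epsilon_1(x)/(d_1(x)+y))$ with invariant density $(1+xy)^{-2}$, project to obtain $f_\alpha$, invoke Zweim\"uller's AFN-map framework for uniqueness/ergodicity and for the wandering rate and return sequence, and compute the Krengel entropy via Rohlin's formula and dilogarithm identities. The four parameter regimes and the appearance of the golden-mean heights arise exactly as you anticipate.

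One point must be corrected, though: the natural extension domain is \emph{not} contained in $I_\alpha\times[-1,1]$. For $\alpha<\tfrac12$ the paper's domain is
\[
\mathcal D_\alpha=\big[\alpha,\tfrac{\alpha}{1-\alpha}\big]\times[0,\infty)\ \cup\ \big(\tfrac{\alpha}{1-\alpha},1-\alpha\big]\times[0,1]\ \cup\ (1-\alpha,1]\times[-1,1],
\]
and the unbounded fibre over $[\alpha,\tfrac{\alpha}{1-\alpha}]$ is precisely what produces the $1/x$ term in $f_\alpha$, since $\int_0^\infty(1+xy)^{-2}\,dy=1/x$. Similarly, for $\alpha\in[\tfrac12,\tfrac12\sqrt2)$ the terms $\tfrac{1}{x(1-x)}$ and $\tfrac{x^2+1}{x(1-x^2)}$ in the density require fibres of the form $[-1,\infty)$ and $[-1,1]\cup[\text{const},\infty)$. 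With the measure $(1+xy)^{-2}\,dx\,dy$ no bounded strip can yield these pieces (your own fibre-integral formula shows this), so your $d_j$ must be allowed to equal $\infty$. Once that adjustment is made, the rest of your outline goes through exactly as in the paper.
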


\vskip .2cm
The paper is organised as follows. In the next section we give some preliminaries on continued fractions and explain how the maps $T_\alpha$ can be used to generate them for numbers in the interval $I_\alpha$. We also prove that the maps $T_\alpha$ fall into the family of what are called AFN-maps in \cite{Zwe00}. In the third section we study the phenomenon of matching, leading to Theorem~\ref{t:main2}, and we give an explicit description of the matching intervals. The fourth section is devoted to defining a planar natural extension for the maps $T_\alpha$ for $\alpha < \frac12  \sqrt{2}$. This is then used to obtain the invariant densities appearing in Theorem~\ref{t:main1}. In the last section we compute the Krengel entropy, wandering rate and return sequence for $T_\alpha$, giving the last part of Theorem~\ref{t:main1}.

\section{Preliminaries}\label{s:regular}

\subsection{Semi-regular continued fraction expansions}
In 1913, Perron introduced the notion of {\em semi-regular} continued fraction expansions, which are finite or infinite expressions for real numbers of the following form:
\[ x = d_0 + \cfrac{\epsilon_0}{d_1 + \cfrac{\epsilon_1}{d_2 + \ddots + \cfrac{\epsilon_{n-1}}{d_n + \ddots}}},\]
where $d_0 \in \mathbb Z$ and for each $n \ge 1$, $\epsilon_{n-1} \in \{-1,1\}$, $d_n \in \mathbb N$ and $d_n+\epsilon_n \ge 1$; see for example \cite{Per13}. We denote the semi-regular continued fraction expansion of a number $x$ by
\[ x = [d_0; \epsilon_0/d_1, \epsilon_1/d_2, \epsilon_2/d_3, \ldots].\]

\vskip .2cm
The maps $T_\alpha$ generate semi-regular continued fraction expansions of real numbers by iteration. Define for any $\alpha \in (0, 1)$ and any $x \in I_\alpha$ the partial quotients $d_k = d_k(x) = d_1(T^{k-1}_{\alpha}(x))$ and the signs $\epsilon_k = \epsilon_k(x) = \epsilon_1(T^{k-1}_{\alpha}(x))$ by setting
$$d_1(x):=\begin{cases}
\floor{\frac{1}{x}} ,& \text{if } x \in D_\alpha^{\mathsf{c}},\\
\floor{\frac{1}{x}}+1, & \text{otherwise};
\end{cases}
\quad \text{ and } \quad
\epsilon_1(x):=\begin{cases}
1 ,& \text{if } x \in D_\alpha^{\mathsf{c}},\\
-1, & \text{otherwise}.
\end{cases}$$
With this notation the map $T_{\alpha}$ can be written as $T_{\alpha}(x)=\epsilon_1(x) \big( \frac{1}{x} -d_1(x)\big)$, implying
\begin{equation}\label{q:fincf}
x= \frac{1}{d_1+\epsilon_1T_{\alpha}(x) }=  \cfrac{1}{d_1+	\cfrac{\epsilon_1}{d_2+ \ddots + \cfrac{\epsilon_{n-1}}{d_n+\epsilon_n T^n_{\alpha}(x) }} } .	
\end{equation}
Denote by $(p_n/q_n)_{n \geq 1}$ the sequence of convergents of such an expansion, that is,
\[ p_n/q_n = [0;1/d_1, \epsilon_1/d_2, \ldots, \epsilon_{n-1}/d_n].\]
Since we obtained $T_\alpha$ from the Gauss map, by flipping on the domain $D_\alpha$ from \eqref{q:Dalpha}, it follows from \cite[Theorem 1]{DHKM12} that for any $x \in I_\alpha$ we have: $\displaystyle \lim_{n \rightarrow \infty} \frac{p_n}{q_n} = x$. Therefore, we can write
\[
x =  \cfrac{1}{d_1+	\cfrac{\epsilon_1}{d_2+ \ddots + \cfrac{\epsilon_{n-1}}{d_n+\ddots }} } =: [0; 1/d_1, \epsilon_1/d_2, \epsilon_2/d_3, \ldots]_\alpha,
\]
which we call the {\em flipped $\alpha$-continued fraction expansion} of $x$.

\vskip .2cm
In case $\epsilon_n=1$ for all $n\ge 1$ the continued fraction expansion is called {\em regular} and we use the common notation $[a_1,a_2, a_3, \ldots]$ for them.
Regular continued fraction expansions are generated by the Gauss map $G:[0,1]\to [0,1]$ given by $G(0)=0$ and $G(x) = \frac1x \pmod 1$ if $x \neq 0$. Therefore, $G$ acts as a shift on the regular continued fraction expansions:
\[ x = [a_1, a_2, a_3, \ldots] \quad  \Rightarrow \quad G(x) = [a_2, a_3, a_4, \ldots].\]
It is well known that the regular continued fraction expansion of a number $x$ is finite if and only if $x \in \mathbb Q$. For any $x \in [0, \frac12\big]$ the following correspondence between the regular continued fraction expansions of $x$ and $1-x$ holds:
\begin{equation}\label{q:rcfalpha}
x = [a_1,a_2 , a_3, \ldots] \quad \Leftrightarrow \quad 1-x = [1, a_1-1,a_2, a_3,  \ldots].
\end{equation}
We will need this property later.

\vskip .2cm
On sequences of digits $(a_n)_{n \ge 1} \in \mathbb N^\mathbb N$ the {\em alternating ordering} is defined by setting $(a_n)_{n \ge 1} \prec (b_n)_{n \ge 1}$ if and only if for the smallest index $m \ge 1$ such that $a_m \neq b_m$ it holds that $(-1)^m a_m < (-1)^m b_m$. The same definition holds for finite strings of digits of the same length. The Gauss map preserves the alternating ordering, i.e.,
\[ (a_n)_{n \ge 1} \prec (b_n)_{n \ge 1} \quad \Leftrightarrow \quad [a_1, a_2, a_3, \ldots] < [b_1, b_2, b_3, \ldots].\]
The next proposition will be needed in the following section.
\begin{prop}\label{p:rational}
Let $\alpha \in (0,1)$ and $x \in I_\alpha$ be given. Then $x \in \mathbb Q$ if and only if there is an $N \ge 0$ such that $T_\alpha^N(x)=1$.
\end{prop}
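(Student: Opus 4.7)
The plan is to prove the two directions independently.

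The backward direction ($T_\alpha^N(x)=1 \Rightarrow x \in \mathbb{Q}$) is immediate: both branches of $T_\alpha$ are M\"obius transformations with integer coefficients, of the form $y \mapsto 1/y - k$ or $y \mapsto (k+1) - 1/y$, so the preimage of any rational under $T_\alpha$ is rational. Applying this $N$ times, starting from $1 \in \mathbb{Q}$, gives $x \in \mathbb{Q}$.

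For the forward direction I would use a Euclidean-algorithm argument showing that the denominator of the orbit, written in lowest terms, strictly decreases until the orbit hits $1$. If $x=1$, take $N=0$; otherwise, write $x_n := T_\alpha^n(x) = a_n/b_n$ in lowest terms with $0 < a_n \leq b_n$, which is possible because $x_n \in I_\alpha \subseteq (0,1]$. Suppose $x_n < 1$, so $a_n < b_n$, and let $b_n = k a_n + s$ with $0 \leq s < a_n$ be the Euclidean division; then $\gcd(a_n,s) = \gcd(a_n,b_n) = 1$. On the branch $x_n \in D_\alpha^{\mathsf{c}}$ one computes $T_\alpha(x_n) = 1/x_n - k = s/a_n$, where necessarily $s > 0$ because $s=0$ would force $a_n = 1$, making $x_n = 1/b_n$ a unit fraction lying in $D_\alpha$ and contradicting the branch hypothesis. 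On the branch $x_n \in D_\alpha$ one gets $T_\alpha(x_n) = 1 - s/a_n = (a_n-s)/a_n$, which equals $1$ precisely when $s = 0$ (in which case $a_n = 1$ and the iteration terminates) and is otherwise already in lowest terms with denominator $a_n$. In every non-terminating step, therefore, $b_{n+1} = a_n < b_n$, so $(b_n)$ is a strictly decreasing sequence of positive integers. Such a sequence must terminate, and the only way the iteration can stop is by producing $x_N = 1$.

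The main obstacle is the bookkeeping around reductions to lowest terms and the unit-fraction edge case. In particular, one must confirm that the two potential degenerate outputs (a value $0$ coming from the Gauss branch, or a value $1$ coming prematurely from the flipped branch) are precisely accounted for by the fact that the set $D_\alpha$ from \eqref{q:Dalpha} contains every unit fraction $1/b$ with $b \geq 1$, so unit fractions always lie in the flipped branch and are sent directly to the fixed point $1$ in a single step.
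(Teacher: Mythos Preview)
Your proof is correct and follows essentially the same route as the paper's: both directions are handled exactly as the authors do, with the forward direction showing that the denominator of $T_\alpha^n(x)$ in lowest terms strictly decreases as long as the orbit has not reached $1$. Your treatment is in fact a bit more careful than the paper's terse version, since you spell out why unit fractions $1/b$ lie in $D_\alpha$ (as right endpoints of the intervals in \eqref{q:Dalpha}) and hence are sent to $1$ rather than to $0$; the paper simply asserts $0<t_{n+1}<t_n$ without isolating this edge case.
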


\begin{proof}
If there is an $N \ge 0$ such that $T_\alpha^N(x)=1$, then it follows immediately from \eqref{q:fincf} that $x \in \mathbb Q$. Suppose $x \in \mathbb Q$. Note that $T_\alpha^n (x) \in \mathbb Q \cap I_\alpha$ for all $n \ge 0$ and write $T_\alpha^n(x) = \frac{s_n}{t_n}$ with $s_n, t_n \in \mathbb N$ and $t_n$ as small as possible. Assume for a contradiction that $T_\alpha^n(x) \neq 1$ for all $n \ge 1$. Then $s_n < t_n$ and since either $T_\alpha^{n+1}(x) = \frac{t_n-ks_n}{s_n}$ or $T_{\alpha}^{n+1} (x) = \frac{(k+1)s_n-t_n}{s_n}$, we get $0 < t_{n+1} < t_n$. This gives a contradiction.
\end{proof}

\subsection{AFN-maps}
We start our investigation into the dynamical properties of the maps $T_\alpha$ by showing that they fall into the category of AFN-maps considered in \cite{Zwe00}. Let $\lambda$ denote the one-dimensional Lebesgue measure and let $X$ be a finite union of bounded intervals. A map $T: X \to X$ is called an {\em AFN-map} if there is a finite partition $\mathcal{P}$ of $X$ consisting of non-empty, open intervals $I_i$, such that the restriction $T\mid_{I_i}$ is continuous, strictly monotone and twice differentiable. Moreover, $T$ has to satisfy the following three properties: \label{assu}
\begin{itemize}
\item[\textbf{(A)}] Adler's condition: $\frac{T''}{(T')^2}$ is bounded on $\cup_i I_i$;
\item[\textbf{(F)}] The finite image condition: $T(\mathcal{P}):=\{T(I_i): I_i \in \mathcal{P}\}$ is finite;
\item[\textbf{(N)}] The repelling indifferent fixed point condition: there exists a finite set $\mathcal{Z} \subseteq \mathcal{P}$, such that each $Z_i \in \mathcal{Z}$ has an indifferent fixed point $x_{Z_i}$, that is,
$$\lim_{x \rightarrow x_{Z_i}, x \in Z_i} T_\alpha(x)= x_{Z_i} \quad \text{ and } \quad \lim_{x \rightarrow x_{Z_i}, x \in Z_i} T' (x)=1,$$ $T'$ decreases on $(-\infty, x_{Z_i}) \cap Z_i$ and increases on $(x_{Z_i}, \infty) \cap Z_i$. Lastly, $T$ is assumed to be uniformly expanding on sets bounded away from $\{x_{Z_i}: Z_i \in \mathcal{Z}\}$.
\end{itemize}

\vskip .2cm
For the maps $T_{\alpha}$ we can take $\mathcal P$ to be the collection of intervals of monotonicity (or cylinder sets) of $T_\alpha$, defined for each $\epsilon \in \{-1,1\}$ and $d \ge 1$ by
\begin{equation}\label{q:cylinders}
\Delta(\epsilon, d) = int \{ x \in I_\alpha \, : \, \epsilon_1(x)=\epsilon \text{ and } d_1(x)=d\},
\end{equation}
where we use $int$ to denote the interior of the set.

\begin{lem}\label{l:afn}
For each $\alpha \in ( 0,1)$ the map $T_\alpha$ is an AFN-map.
\end{lem}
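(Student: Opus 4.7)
The plan is to verify conditions (A), (F), (N) one by one on the natural partition $\mathcal{P} = \{\Delta(\epsilon,d) : \epsilon \in \{-1,1\},\ d \ge 1\}$ defined in \eqref{q:cylinders}, after first arguing that this is a finite partition of $I_\alpha$ into non-empty open intervals on which $T_\alpha$ is continuous, strictly monotone and real-analytic. Finiteness follows because $I_\alpha \subseteq [\min\{\alpha,1-\alpha\},1]$ is bounded away from $0$, so $\lfloor 1/x\rfloor$ takes only finitely many values on $I_\alpha$; each such integer value $n$ yields at most two cylinders, one in $D_\alpha^{\mathsf{c}}$ on which $T_\alpha(x) = 1/x - n$ and one in $D_\alpha$ on which $T_\alpha(x) = n+1 - 1/x$, split at the point $1/(n+\alpha)$. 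Each branch is a M\"obius-type map, hence smooth and strictly monotone.

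For Adler's condition (A), a direct computation on each cylinder gives $T_\alpha'(x) = \pm 1/x^2$ and $T_\alpha''(x) = \mp 2/x^3$, so
\[
\left|\frac{T_\alpha''(x)}{(T_\alpha'(x))^2}\right| = 2x \le 2
\]
uniformly on $I_\alpha$. For the finite image condition (F), each $\Delta(\epsilon,d)$ is mapped homeomorphically by $T_\alpha$ onto an open interval whose endpoints are the one-sided limits of $T_\alpha$ at the two cylinder endpoints; these endpoints lie in the finite set $\{\alpha,\,1-\alpha\}\cup\{0,1\}\cup\{T_\alpha(c) : c \text{ a cylinder endpoint}\}$, so $T_\alpha(\mathcal{P})$ is finite.

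The key step is condition (N). I would identify the unique indifferent fixed point as $x_Z = 1$: for $x$ just below $1$ one has $\lfloor 1/x\rfloor = 1$ and $x \in [1/(1+\alpha),1] \subseteq D_\alpha$, so the cylinder containing a left neighborhood of $1$ is $Z := \Delta(-1,2) = \bigl(1/(1+\alpha),\,1\bigr)$, on which $T_\alpha(x) = 2 - 1/x$. Then $\lim_{x\uparrow 1} T_\alpha(x) = 1$ and $\lim_{x\uparrow 1} T_\alpha'(x) = \lim_{x\uparrow 1} 1/x^2 = 1$. The monotonicity of $T_\alpha'$ on either side of $x_Z$ is satisfied because $T_\alpha'(x) = 1/x^2$ is strictly decreasing on $(0,1) \supseteq Z$, while the condition on $(x_Z,\infty)\cap Z$ is vacuous since $1$ is the right endpoint of $I_\alpha$. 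Taking $\mathcal{Z} = \{Z\}$, uniform expansion away from $1$ holds because on every cylinder $|T_\alpha'(x)| = 1/x^2$, and on $I_\alpha \setminus (1-\delta,1]$ this is bounded below by $1/(1-\delta)^2 > 1$.

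I do not expect a serious obstacle here: everything reduces to explicit formulae for the two functions $G$ and $1-G$. The only points requiring care are the bookkeeping for the cylinder structure near $\alpha$ and $1-\alpha$, and checking that $x=1$ is the only fixed point with derivative $1$, which is immediate since $|T_\alpha'(x)| = 1/x^2 > 1$ for every $x < 1$.
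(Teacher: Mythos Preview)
Your proof is correct and follows essentially the same approach as the paper: both take $\mathcal{P}=\{\Delta(\epsilon,d)\}$, verify (A) via the explicit computation $T_\alpha''/(T_\alpha')^2=\pm 2x$, check (F) by noting that the branch images have endpoints in a fixed finite set, and establish (N) by observing that $|T_\alpha'(x)|=1/x^2>1$ for $x<1$ with the unique indifferent fixed point at $1$. Your treatment is slightly more detailed in places (explicitly identifying $Z=\Delta(-1,2)$ and verifying the monotonicity of $T_\alpha'$ on $Z$), but the argument is the same.
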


\begin{proof}
Let $\mathcal P = \{ \Delta (\epsilon, d) \}$. Then $T_\alpha$ is continuous, strictly monotone and twice differentiable on each of the intervals in $\mathcal P$. We check the three other conditions. For \textbf{(A)} note that $T_\alpha'(x)= \pm \frac1{x^2}$, so that $\frac{T_\alpha''(x)}{(T_\alpha'(x))^2} = \pm \frac{2x^4}{x^3} = \pm 2x\le 2$ for any $x$ for which $T_\alpha'$ is defined. Also, for any $J \in \mathcal P$ we have
\[ T_\alpha ( J ) \in \big\{ (\alpha,1), (1-\alpha,1), (\alpha, T_\alpha(\alpha)), (1-\alpha, T_\alpha(1-\alpha)), (T_\alpha(\alpha),1), (T_\alpha(1-\alpha),1)\big\},\]
giving \textbf{(F)}. Finally, $T_\alpha$ has only 1 as an indifferent fixed point. Since $T'_{\alpha}(x)=1/x^2>1$ for any $x \in I_\alpha \setminus \{1\}$ where $T_\alpha'(x)$ is defined, we see that $T'_\alpha$ decreases near 1 and also \textbf{(N)} holds.
\end{proof}

Using \cite[Theorem A]{Zwe00} we then obtain the following result.
\begin{prop}\label{t:acim}
For each $\alpha \in \big(0,1)$ there exists a unique absolutely continuous, infinite, $\sigma$-finite $T_\alpha$-invariant measure $\mu_\alpha$ that is ergodic and conservative for $T_\alpha$.
\end{prop}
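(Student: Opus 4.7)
The strategy is to invoke \cite[Theorem A]{Zwe00} directly. By Lemma~\ref{l:afn}, $T_\alpha$ is an AFN-map on $I_\alpha$, so that theorem immediately supplies an absolutely continuous, $\sigma$-finite, $T_\alpha$-invariant measure $\mu_\alpha$, unique up to scalar multiplication, together with ergodicity and conservativity. There is nothing further to prove about these properties beyond quoting the statement; in particular, uniqueness comes from the ergodicity of $\mu_\alpha$ on each conservative component together with the finite-image condition \textbf{(F)}, both of which are already packaged into Zweimüller's theorem.

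The only remaining point is that $\mu_\alpha$ is \emph{infinite} rather than finite. The theorem of \cite{Zwe00} also determines this from the local expansion at each indifferent fixed point: the a.c.\ invariant measure of an AFN-map is infinite precisely when at some such point the tangency is at least quadratic, i.e., $T(x_0 + t) = x_0 + t + c\, t^{p+1} + o(t^{p+1})$ with $p \geq 1$. For $T_\alpha$, the only indifferent fixed point is $x_0 = 1$ (recall $1 \in [\tfrac{1}{1+\alpha}, 1] \subseteq D_\alpha$, so the relevant branch is $T_\alpha(x) = 2 - 1/x$). Setting $t = 1-x$, I would expand
\[
1 - T_\alpha(1-t) \;=\; \frac{t}{1-t} \;=\; t + t^2 + t^3 + \cdots,
\]
which is the parabolic case with $p = 1$. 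This identifies the fixed point as of the infinite-measure type, so $\mu_\alpha(I_\alpha) = \infty$ for every $\alpha \in (0,1)$, and one reads off the non-integrable behaviour $f_\alpha(x) \sim C/(1-x)$ near $x = 1$ of the eventual invariant density.

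The main (and only) substantive step is this local expansion at $x = 1$; once it is in hand the proposition follows by a direct citation. There is no genuine obstacle here: the heavy analytic content, namely the construction of the invariant measure and its ergodic-theoretic properties, is absorbed into \cite{Zwe00}, while the structural hypotheses \textbf{(A)}, \textbf{(F)}, \textbf{(N)} needed to apply it have already been verified in Lemma~\ref{l:afn}.
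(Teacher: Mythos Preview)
There is a genuine gap in your argument for uniqueness. Zweim\"uller's Theorem~A does \emph{not} deliver a single ergodic a.c.\ invariant measure; it gives a finite decomposition $I_\alpha = X_1 \cup \cdots \cup X_N$ (mod $\lambda$) into forward-invariant open sets, each a finite union of intervals, with $T_\alpha|_{X_i}$ conservative and ergodic and supporting its own a.c.\ invariant measure unique up to scalar. Global uniqueness requires $N=1$, and nothing in conditions \textbf{(A)}, \textbf{(F)}, \textbf{(N)} forces that; you must supply a transitivity-type argument. The paper does this via Proposition~\ref{p:rational}: each $X_i$ contains an open interval, hence a rational point, and every rational is eventually mapped to $1$. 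Forward invariance of the $X_i$ then forces every $X_i$ to contain a neighbourhood $(1-\delta,1)$ of the indifferent fixed point, which is incompatible with disjointness unless $N=1$.

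Your local expansion at $x=1$ is correct and does identify the tangency as quadratic, but by itself it only tells you that \emph{the component containing $1$} carries an infinite measure. Without first establishing that there is a single component and that it contains $(1-\delta,1)$, you cannot conclude either uniqueness or infiniteness of $\mu_\alpha$. In short, the step you describe as ``nothing further to prove'' is exactly where the paper's argument has content: Proposition~\ref{p:rational} is the missing ingredient.
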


\begin{proof}
Since $T_\alpha$ is an AFN-map, \cite[Theorem A]{Zwe00} immediately implies that there are finitely many disjoint open sets $X_1, \ldots, X_N \subseteq I_\alpha$, such that $T_\alpha(X_i) = X_i \pmod \lambda$ and $T|_{X_i}$ is conservative and ergodic with respect to $\lambda$. Each $X_i$ is a finite union of open intervals and supports a unique (up to a constant factor) absolutely continuous $T_\alpha$-invariant measure. Moreover, this invariant measure is infinite if and only if $X_i$ contains an interval $(1-\delta, 1)$ for some $\delta>0$. Since each open interval contains a rational point in its interior, Proposition~\ref{p:rational} together with the forward invariance of the sets $X_i$ implies that there can only be one set $X_i$ and that this set contains an interval of the form $(1-\delta, 1)$. Hence, there is a unique (up to a constant factor) absolutely continuous invariant measure $\mu_\alpha$ that is infinite, $\sigma$-finite, ergodic and conservative for $T_\alpha$.
\end{proof}

From Proposition~\ref{t:acim} and \cite[Theorem 1]{Zwe00} it follows that each map $T_{\alpha}$ is {\em pointwise dual-ergodic}, i.e., there are positive constants $a_n(T_{\alpha})$, $n \geq 1$, such that for each $f \in L^1(I_\alpha, \mathcal B_{\alpha}, \mu_\alpha)$, where $\mathcal B_{\alpha}$ is the Borel $\sigma-$algebra on $I_{\alpha}$,
\begin{equation}\label{e:pde}
\lim_{n \to \infty} \frac{1}{a_n(T_{\alpha})} \sum_{k=0}^{n-1} \mathbf{P}_{\alpha}^k f = \int_{I_\alpha} f \, d \mu_{\alpha} \quad \mu_{\alpha}\text{-a.e.},
\end{equation}
where $\mathbf{P}_{\alpha}$ denotes the {\em transfer operator} of the map $T_\alpha$, defined by
\begin{equation}\nonumber
\int_A \mathbf{P}_{\alpha} f \, d\mu_\alpha = \int_{T_\alpha^{-1}(A)} f \, d\mu_\alpha \quad \text{ for all } f \in L^1(I_\alpha, \mathcal B_{\alpha}, \mu_\alpha) \text{ and } A \in \mathcal B_{\alpha}.
\end{equation}
The sequence $(a_n(T_\alpha))_{n \ge 1}$ is called the {\em return sequence} of $T_\alpha$ and will be given for $\alpha \in \big(0, \frac12 \sqrt 2 \big)$ in Section~\ref{s:iso}.

\section{Matching almost everywhere}\label{s:matchingae}
In this section we prove that matching holds for almost every $\alpha\in(0,1)$. The discontinuity points of the map $T_\alpha$ are of the form $\frac1{k+\alpha}$ for some positive integer $k$. For any such point,
\[ c^-=\lim_{x \uparrow \frac1{k+\alpha}} T_\alpha(x) = \alpha \quad \text{and} \quad c^+=\lim_{x \downarrow \frac1{k+\alpha}} T_\alpha(x)  = -(k+\alpha) + k+1=1-\alpha.\]
Recall the definition of matching from equation~(\ref{q:matchingdef}): matching for $T_{\alpha}$ holds if there exist non-negative integers $M, N$ such that
\begin{equation}\label{e:matching}
T_{\alpha}^M (\alpha) = T_{\alpha}^N (1-\alpha).
\end{equation}
Some authors also require the evaluation of the derivative of the iterates in the left and right limits of the critical points to coincide. In our case, we do not need this constraint, since we prove that matching is a local property.

\vskip .2cm
In the next proposition we show that the first half of the parameter space consists of a single matching interval.

\begin{prop}\label{p:matching1}
For $\alpha \in \big(0, \frac12\big)$ it holds that $T_\alpha(\alpha) = T^2_\alpha(1-\alpha)$.
\end{prop}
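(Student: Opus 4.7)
The plan is to follow the points $\alpha$ and $1-\alpha$ explicitly through one and two iterations of $T_\alpha$ and observe that they collapse to the same value. First I would show that $1-\alpha \in D_\alpha^{\mathsf{c}}$: since $(1-\alpha)(1+\alpha) = 1-\alpha^2 < 1$, we have $1-\alpha < \frac{1}{1+\alpha}$, and for $n \geq 2$ the intervals $\bigl[\frac{1}{n+\alpha},\frac{1}{n}\bigr]$ lie in $\bigl[0,\tfrac12\bigr]$ while $1-\alpha > \tfrac{1}{2}$. Therefore the Gauss branch applies and
\[
T_\alpha(1-\alpha) = \tfrac{1}{1-\alpha}-1 = \tfrac{\alpha}{1-\alpha} =: y.
\]

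Set $k := \lfloor 1/\alpha\rfloor$, and note $k \geq 2$ because $\alpha < \tfrac{1}{2}$. Since $\tfrac{1}{y} = \tfrac{1}{\alpha}-1$, we have $\lfloor 1/y\rfloor = k-1$, so if $y$ lies in $D_\alpha$ it must do so via the $n = k-1$ piece, while $\alpha$ can only be in $D_\alpha$ via the $n = k$ piece. The central observation is that these two membership conditions coincide: a short rearrangement shows that both $\alpha \geq \frac{1}{k+\alpha}$ and $y \geq \frac{1}{k-1+\alpha}$ are equivalent to the single inequality
\[
\alpha(k+\alpha) \geq 1.
\]

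I would then split into the two cases this inequality creates. If $\alpha(k+\alpha) \geq 1$, both $\alpha$ and $y$ lie in $D_\alpha$ and the flipped branches give
\[
T_\alpha(\alpha) = -\tfrac{1}{\alpha} + k+1 \quad \text{and} \quad T_\alpha(y) = -\tfrac{1}{y} + k = -\bigl(\tfrac{1}{\alpha}-1\bigr)+k,
\]
which agree. If instead $\alpha(k+\alpha) < 1$, both points lie in $D_\alpha^{\mathsf{c}}$ and the Gauss branches give
\[
T_\alpha(\alpha) = \tfrac{1}{\alpha} - k \quad \text{and} \quad T_\alpha(y) = \tfrac{1}{y} - (k-1) = \tfrac{1}{\alpha} - k,
\]
again equal. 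Hence $T_\alpha(\alpha) = T_\alpha(y) = T_\alpha^2(1-\alpha)$ in all cases.

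There is no genuine obstacle; the proof is a direct two-case verification. The only point of care is correctly locating $\alpha$ and $y$ relative to the intervals of $D_\alpha$, which is transparent once the floors $\lfloor 1/\alpha\rfloor$ and $\lfloor 1/y\rfloor$ are pinned down. The boundary instance $\alpha = 1/k$ satisfies the inequality strictly and falls into the first case, yielding $T_\alpha(\alpha) = 1 = T_\alpha^2(1-\alpha)$, so it requires no separate treatment.
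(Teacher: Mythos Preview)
Your proof is correct and follows essentially the same route as the paper's: first verify $1-\alpha\in D_\alpha^{\mathsf c}$ so that $T_\alpha(1-\alpha)=\alpha/(1-\alpha)$, then show that $\alpha$ and $\alpha/(1-\alpha)$ lie simultaneously in $D_\alpha$ or in $D_\alpha^{\mathsf c}$ via a single equivalent inequality, and finish by a direct two-case computation. The only cosmetic difference is that the paper phrases the pivotal equivalence in terms of the regular continued-fraction digit $a_1=\lfloor 1/\alpha\rfloor$ rather than your $k$, and it leaves the final branch evaluations implicit; your explicit floor-based bookkeeping is arguably cleaner.
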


\begin{proof}
Fix $\alpha = [a_1,a_2, \ldots] \in \big(0, \frac12 \big)$. First note that $\frac12 < 1-\alpha < \frac1{1+\alpha}$, so that by \eqref{q:rcfalpha} we obtain that
\begin{equation}\nonumber
T_\alpha(1-\alpha) = G(1-\alpha) = \frac{\alpha}{1-\alpha}  = [a_1-1,a_2,a_3, \ldots].
\end{equation}
Hence
\[ \frac{1}{a_1} < \alpha < \frac{1}{a_1-1+\alpha } \Leftrightarrow \frac{1}{a_1-1} < \frac{\alpha}{1-\alpha} < \frac{1}{a_1-2+\alpha },\]
which gives that either $\alpha$ and $T_\alpha(1-\alpha)$ are both in $D_\alpha$ or in $D_\alpha^{\mathsf{c}}$. In both cases,
\[T_\alpha (\alpha) = T_\alpha^2(1-\alpha). \qedhere \]
\end{proof}

For $\alpha > \frac12$ the situation is much more complicated. One explanation for this difference comes from two operations that convert one semi-regular continued fraction expansion of a number into another: singularisation and insertion. Both operations were introduced in \cite{Per13} and later appeared in many other places in the literature, see e.g.~\cite{Kra91,DK00,HK02,Sch04,DHKM12}. {\em Singularisation} deletes one of the convergents $\frac{p_n}{q_n}$ from the sequence while altering the ones before and after; {\em insertion} inserts the mediant $\frac{p_n+p_{n+1}}{q_n+q_{n+1}}$ of $\frac{p_n}{q_n}$ and $\frac{p_{n+1}}{q_{n+1}}$ into the sequence. It follows from \cite[Section 2.1]{DHKM12} that for $\alpha < \frac12$ the flipped $\alpha$-continued fraction expansions of numbers in $I_{\alpha}$ can be obtained from their regular continued fraction expansions by insertions only, while for $\alpha>\frac{1}{2}$ one needs singularisations as well.

\vskip .2cm
Define the map $R:[0,1]\to [0,1]$ by $R(x) = 1-G(x)$, see Figure~\ref{f:off}(d). Before we prove that matching holds Lebesgue almost everywhere, we describe the effect of $R$ on the regular continued fraction expansions of numbers in $(0,1)$.

\begin{lem}\label{l:reverserenyi}
Let $x \in (0,1)$ have regular continued fraction expansion $x=[x_1, x_2, x_3, \ldots]$. Then for each $j \ge 1$,
\[ R^{x_2 + x_4 + \cdots + x_{2j}} (x) = [ x_{2j+1}+1 , x_{2j+2}, x_{2j+3}  , \ldots]\]
and if $0<\ell < a_{2j}$, then
\[ R^{x_2 + x_4 + \cdots + x_{2j-2}+\ell} (x) = [ 1, x_{2j}-\ell ,x_{2j+1}, x_{2j+2}  , \ldots].\]
\end{lem}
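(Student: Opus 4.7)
The approach is induction on $j$, based on a single key computation about how $R = 1 - G$ acts on regular continued fraction digits. The critical observation is that $R(y) = 1 - G(y) = 1 - [y_2, y_3, \ldots]$ depends only on the tail $[y_2, y_3, \ldots]$ of $y$, not on the leading digit $y_1$. I will first establish two elementary identities for a number $y = [y_1, y_2, y_3, \ldots] \in (0,1)$: identity (a), that if $y_2 \ge 2$ then $R(y) = [1, y_2 - 1, y_3, y_4, \ldots]$; and identity (b), that if $y_2 = 1$ then $R(y) = [y_3 + 1, y_4, y_5, \ldots]$. Identity (a) is immediate from \eqref{q:rcfalpha} applied to $G(y) = [y_2, y_3, \ldots] \le \tfrac12$. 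Identity (b) follows from a short computation: writing $G(y) = 1/(1+[y_3,y_4,\ldots])$, one obtains $1/R(y) = 1 + 1/[y_3, y_4, \ldots] = (y_3 + 1) + [y_4, y_5, \ldots]$.

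With (a) and (b) in place, the base case $j = 1$ is handled by iterating (a) until the partial quotient in position two is reduced to $1$, and then invoking (b). Explicitly, when $x_2 \ge 2$, successive applications of (a) give $R^{\ell}(x) = [1, x_2 - \ell, x_3, x_4, \ldots]$ for $1 \le \ell \le x_2 - 1$, which is the second formula of the lemma at $j = 1$. Applying (b) to $R^{x_2 - 1}(x) = [1, 1, x_3, x_4, \ldots]$ then gives $R^{x_2}(x) = [x_3 + 1, x_4, \ldots]$, the first formula at $j = 1$. If $x_2 = 1$, the intermediate claim is vacuous and identity (b) applied directly to $x$ yields the first formula.

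For the inductive step, set $S_j := x_2 + x_4 + \cdots + x_{2j}$. Suppose the first formula holds at step $j - 1$, so $R^{S_{j-1}}(x) = [x_{2j-1}+1, x_{2j}, x_{2j+1}, \ldots]$ (with $S_0 = 0$ and the convention that this reads $R^0(x) = x = [x_1, x_2, \ldots]$ when $j = 1$). Since identities (a) and (b) are insensitive to the leading digit, the same sequence of manipulations used in the base case, now with $x_{2j}$ in the role of $x_2$, produces $R^{S_{j-1}+\ell}(x) = [1, x_{2j} - \ell, x_{2j+1}, \ldots]$ for $0 < \ell < x_{2j}$ and $R^{S_j}(x) = [x_{2j+1} + 1, x_{2j+2}, \ldots]$. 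This simultaneously yields both formulas for index $j$ and closes the induction. The only mild subtlety is carefully handling the split between the cases $x_{2j} = 1$ and $x_{2j} \ge 2$ within each block of iterations; no serious obstacle is anticipated.
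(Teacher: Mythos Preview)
Your proposal is correct and follows essentially the same approach as the paper: the paper also computes $R(x)$ via the case split on whether $x_2=1$ or $x_2>1$ (both cases derived from \eqref{q:rcfalpha}), and then says the statement ``easily follows by induction.'' You have simply spelled out that induction in more detail, and derived the $x_2=1$ case by a direct computation rather than by reading \eqref{q:rcfalpha} in the reverse direction; these are cosmetic differences only.
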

\begin{proof}
By \eqref{q:rcfalpha} it holds that
\[
R(x) = 1-G(x) = 1-[ x_2, x_3, x_4, \ldots]
= \begin{cases}
[ x_3+1, x_4, x_5, \ldots] = R^{x_2}(x), & \text{if } x_2=1,\\
[ 1, x_2-1, x_3, x_4,\ldots], & \text{if } x_2 >1.
\end{cases}
\]
The statement then easily follows by induction.
\end{proof}

\begin{nrem}\label{r:parity}
{\rm The previous lemma implies that $R$ preserves the parity of the regular continued fraction digits. More precisely, if $x \in (0,1)$, then (except for possibly the first two digits) the regular continued fraction expansion of $R(x)$ has regular continued fraction digits of $x$ with even indices in even positions and regular continued fraction digits with odd indices in odd positions. }
\end{nrem}

The map $T_\alpha$ equals the map $R$ on $D_\alpha$ and $G$ on $D_\alpha^{\mathsf{c}}$. The next lemma specifies the times $n$ at which the orbit of $\alpha$ (or $1-\alpha$) can enter $D_\alpha^{\mathsf{c}}$ for the first time.

\begin{lem}\label{l:entrytimes}
Let $\alpha=[1, a_1,a_2,a_3,\ldots] \in \big( \frac12, 1 \big)$. If $m:= \min \{ i \ge 0 \, : \, T_\alpha^i(\alpha) \in D_\alpha^{\mathsf{c}} \}$ exists, then $m=a_1+a_3+ \cdots + a_{2j+1}-1$ where $j$ is the unique integer such that
\[ a_1 + a_3 + \cdots + a_{2j-1}-1 < m \le a_1 +a_3 + \cdots + a_{2j+1}-1.\]
Similarly, if $k:=\min \{ i \ge 0 \, : \, T_\alpha^i(1-\alpha) \in D_\alpha^{\mathsf{c}} \}$ exists, then $k=a_2+a_4+\cdots + a_{2j}-1$ where $j$ is the unique integer such that
\[ a_2+ a_4+ \cdots + a_{2j-2}-1 < k \le a_2+a_4+ \cdots + a_{2j}-1.\]
\end{lem}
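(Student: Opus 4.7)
The plan is to observe that, as long as the orbit of $1-\alpha$ under $T_\alpha$ has not yet entered $D_\alpha^{\mathsf{c}}$, the iterates $T_\alpha^i(1-\alpha)$ coincide with $R^i(1-\alpha)$, since $T_\alpha = R$ on $D_\alpha$. By \eqref{q:rcfalpha} the regular continued fraction expansion of $1-\alpha$ is $[a_1+1, a_2, a_3, \ldots]$, so Lemma~\ref{l:reverserenyi} furnishes explicit RCF descriptions of every iterate $R^i(1-\alpha)$. The task is then to determine which of these iterates can possibly lie in $D_\alpha^{\mathsf{c}}$.

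The key observation is a simple necessary condition: a point $y \in I_\alpha$ with RCF $y=[y_1,y_2,\ldots]$ lies in $D_\alpha^{\mathsf{c}}$ if and only if $G(y)=[y_2,y_3,\ldots]\ge \alpha$, and since $\alpha>\tfrac12$ this forces $y_2=1$ (otherwise $[y_2,y_3,\ldots]\le 1/y_2 \le \tfrac12<\alpha$). Reading off second digits from Lemma~\ref{l:reverserenyi}, the iterate $R^{a_2+\cdots+a_{2j-2}+\ell}(1-\alpha)=[1,a_{2j}-\ell,a_{2j+1},\ldots]$, for $1\le \ell<a_{2j}$, has second digit $a_{2j}-\ell$, equal to $1$ only at $\ell = a_{2j}-1$, i.e.\ at time $a_2+\cdots+a_{2j}-1$. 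The iterate $R^{a_2+\cdots+a_{2j}}(1-\alpha)=[a_{2j+1}+1,a_{2j+2},\ldots]$ has second digit $a_{2j+2}$, equal to $1$ only when $a_{2j+2}=1$, in which case that time coincides with $a_2+\cdots+a_{2j+2}-1$. Finally $R^0(1-\alpha)=[a_1+1,a_2,\ldots]$ has second digit $a_2$, equal to $1$ only when $a_2=1$, in which case the time $0$ equals $a_2-1$. Hence every candidate entry time is of the form $a_2+a_4+\cdots+a_{2j}-1$ for some $j \ge 1$; taking the smallest one at which the iterate truly lies in $D_\alpha^{\mathsf{c}}$ yields $k$, and the partition of $\{0,1,2,\ldots\}$ by the intervals $(a_2+\cdots+a_{2j-2}-1,\,a_2+\cdots+a_{2j}-1]$ determines $j$ uniquely.

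The argument for $m$ is completely parallel: the regular continued fraction expansion of $\alpha$ is $[1,a_1,a_2,a_3,\ldots]$, so Lemma~\ref{l:reverserenyi} applied to $\alpha$, combined with the same second-digit criterion, produces candidate times $a_1+a_3+\cdots+a_{2j+1}-1$ for $j \ge 0$ (the shift by one in the indexing reflects the fact that $\alpha$'s first RCF digit is $1$, while $1-\alpha$'s is $a_1+1 \geq 2$). There is no serious obstacle; the only bookkeeping to watch is the collapse of the two families of candidate times when some $a_{2j}$ equals $1$ (where Lemma~\ref{l:reverserenyi}'s intermediate family becomes vacuous for that $j$), together with the initial-iterate edge case, both of which fit neatly into the stated formulas. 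The statement is of course conditional on $k$ (resp.\ $m$) existing; this fails only when the orbit reaches the indifferent fixed point $1 \in D_\alpha$ before ever entering $D_\alpha^{\mathsf{c}}$, which by Proposition~\ref{p:rational} can happen only at rational starting points.
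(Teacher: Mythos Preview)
Your proof is correct and follows essentially the same approach as the paper's: both arguments observe that $T_\alpha^i$ coincides with $R^i$ until the first entry into $D_\alpha^{\mathsf{c}}$, invoke Lemma~\ref{l:reverserenyi} to read off the regular continued fraction expansions of the iterates, and then use the criterion that membership in $D_\alpha^{\mathsf{c}}$ forces the second RCF digit to equal~$1$ (the paper phrases this via the boundary expansions of $\Delta(1,d)$, you via $G(y)>\alpha>\tfrac12$, but these are the same observation). Your treatment is slightly more explicit in enumerating the edge cases $a_{2j}=1$ and the initial iterate, which the paper absorbs into its two-case split; your closing remark that $m$ fails to exist ``only'' when the orbit reaches $1$ is not quite accurate (it can also happen that the orbit stays in $D_\alpha$ forever without ever hitting $1$, as occurs for the non-matching parameters), but this side comment is not needed for the lemma.
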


\begin{proof}
For the first statement, by the definition of $m$ we know that $T^i_{\alpha}(\alpha)= R^i(\alpha)$ for all $i \leq m$. From Lemma~\ref{l:reverserenyi} it then follows that if $m= a_1 + a_3 + \cdots + a_{2j-1}$, then
\[ T_\alpha^m (\alpha) = [ a_{2j}+1 ,a_{2j+1}, a_{2j+2}  , \ldots],\]
and if $m= a_1 + a_3 + \cdots + a_{2j-1}+\ell$ for some $0 < \ell \leq a_{2j+1}-1$, then
\[ T_{\alpha}^m (\alpha) = [ 1, a_{2j+1}-\ell ,a_{2j+2}, a_{2j+3}  , \ldots].\]
Recall that $D_\alpha^{\mathsf{c}} = \bigcup_d \Delta(1, d)$. The right boundary point of any cylinder $\Delta(1, d)=(\frac{1}{d+1}, \frac{1}{d+\alpha})$ has regular continued fraction expansion $[ d, 1, a_1 , a_2, \ldots]$. Since the regular continued fraction expansion of the left boundary point also starts with the digits $d,1$, any $x \in D_\alpha^{\mathsf{c}}$ has a regular continued fraction expansion of the form $[x_1,1,x_3, \ldots]$. In particular this holds for $T_\alpha^m(\alpha)$, which implies that either $a_{2j+1}=1$ or $\ell=a_{2j+1}-1$. In both cases, $ m= a_1+a_3+ \cdots +a_{2j+1}-1$. For the second part of the lemma, recall from \eqref{q:rcfalpha} that $1-\alpha = [a_1+1, a_2, a_3, \ldots]$. The proof of the second part then goes along the same lines as above.
\end{proof}

\vskip .2cm
Recall from the introduction the definition of matching intervals as the maximal parameter intervals on which the matching exponents $M,N$ from~\eqref{q:matchingdef} are constant. We can obtain a complete description of the matching intervals by relating them to the matching intervals of Nakada's $\alpha$-continued fraction maps from \eqref{q:nakada}. First we recall some notation and results on matching for the maps from \eqref{q:nakada}. Any rational number $a \in \mathbb{Q} \cap (0,1)$ has two regular continued fraction expansions:
\[a=[a_1, \ldots, a_n]= [a_1, \ldots, a_n-1,1], \quad a_n \geq 2.\]
The {\em quadratic interval} $I_a$ associated to $a$ is the interval with endpoints
\[[\overline{a_1, \ldots, a_n}] \quad \text{ and } \quad [\overline{a_1, \ldots, a_n-1,1}].\]
The quadratic interval $I_1$ is defined separately by $I_1 = (g,1)$, where $g = \frac{\sqrt 5-1}{2}$. A quadratic interval $I_a$ is called \textit{maximal} if it is not properly contained in any other quadratic interval. By \cite[Theorem 1.3]{CT12} maximal intervals correspond to matching intervals for Nakada's $\alpha$-continued fraction maps.

\vskip .2cm
Let $\mathcal R = \{ a \in \mathbb Q \cap (0,1] \, : \, I_a \text{ is maximal} \}$ and $a = [a_1, \ldots, a_n] \in \mathcal R$ with $a_n \ge 2$. The map $x \mapsto \frac1{1+x}$ is the inverse of the right most branch of the Gauss map. Therefore, $\frac1{1+a} = [1,a_1, a_2, \ldots, a_n-1,1] = [1,a_1, a_2, \ldots, a_n]$. Write
\[ \begin{split} J_a^L =\ & \Big( [1,\overline{a_1, a_2, \ldots , a_n-1,1}], [1,a_1, a_2, \ldots, a_n-1,1] \Big),\\
J_a^R =\ & \Big([1,a_1, a_2, \ldots, a_n], [1,\overline{a_1, a_2, \ldots , a_n}] \Big),
\end{split}\]
if $n$ is odd and
\[ \begin{split} J_a^L =\ & \Big( [1,\overline{a_1, a_2, \ldots , a_n}], [1,a_1, a_2, \ldots, a_n] \Big),\\
J_a^R =\ & \Big([1,a_1, a_2, \ldots, a_n-1,1], [1,\overline{a_1, a_2, \ldots , a_n-1,1}] \Big),
\end{split}\]
if $n$ is even, so that $\frac1{1+I_a} = J_a^L \cup J_a^R \cup \big\{ \frac1{1+a} \big\}$. Finally, let
\begin{equation}\label{q:oddn}
M= a_1+ a_3 + \cdots + a_n \quad \text{ and } \quad N= a_2+ a_4 + \cdots + a_{n-1}+2
\end{equation}
if $n$ is odd and
\begin{equation}\label{q:neven}
M= a_1+ a_3 + \cdots + a_{n-1}+1 \quad \text{ and } \quad N= a_2+ a_4 + \cdots + a_n+1
\end{equation}
if $n$ is even. The next theorem states that the intervals $J_a^L$ and $J_a^R$ are matching intervals for the flipped $\alpha$-continued fraction maps with matching exponents that depend on $M$ and $N$.

\begin{thm}\label{t:acf}
Let $a \in \mathcal R$ and let $M$ and $N$ be as in \eqref{q:oddn} and \eqref{q:neven}. For each $\alpha \in J_a^L$ the map $T_\alpha$ satisfies $T_\alpha^M(\alpha) = T_\alpha^N(1-\alpha)$ and for each $\alpha \in J_a^R$ the map $T_\alpha$ satisfies $T_\alpha^{M+1}(\alpha) = T_\alpha^{N-1}(1-\alpha)$.
\end{thm}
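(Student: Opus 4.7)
The plan is to verify both matching identities by direct orbit computation, treating four cases according to the parity of $n$ and to whether $\alpha$ lies in $J_a^L$ or $J_a^R$. The two main ingredients are Lemma~\ref{l:reverserenyi}, which describes how $R$ acts on regular continued fraction digits, and Lemma~\ref{l:entrytimes}, which locates the iterates at which $T_\alpha$ switches from its $R$-branch to its $G$-branch. The sums of odd- and even-indexed partial quotients defining $M$ and $N$ in \eqref{q:oddn}-\eqref{q:neven} are precisely the entry times provided by Lemma~\ref{l:entrytimes}, which is where those specific exponents come from.

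Consider the representative case $n$ odd, $\alpha \in J_a^L$. The definition of $J_a^L$, together with the alternating ordering and~\eqref{q:rcfalpha}, forces $\alpha = [1, a_1, \ldots, a_{n-1}, a_n-1, 1, e_1, e_2, \ldots]$ and $1-\alpha = [a_1+1, a_2, \ldots, a_{n-1}, a_n-1, 1, e_1, e_2, \ldots]$ for some tail $(e_i)$ depending on $\alpha$. Lemma~\ref{l:reverserenyi} together with Lemma~\ref{l:entrytimes} then gives $T_\alpha^{M-1}(\alpha) = R^{M-1}(\alpha) = [2, e_1, e_2, \ldots] \in D_\alpha$, so one more application of $R$ produces $T_\alpha^M(\alpha) = 1 - [e_1, e_2, \ldots]$. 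On the other side, the $(1-\alpha)$-orbit takes $R$-steps up to time $N-3$, lands at $T_\alpha^{N-2}(1-\alpha) = [a_n, 1, e_1, e_2, \ldots] \in D_\alpha^{\mathsf{c}}$ (the single $G$-step predicted by Lemma~\ref{l:entrytimes}), passes through $T_\alpha^{N-1}(1-\alpha) = G([a_n, 1, e_1, \ldots]) = [1, e_1, e_2, \ldots]$, and finally reaches $T_\alpha^N(1-\alpha) = R([1, e_1, \ldots]) = 1 - [e_1, e_2, \ldots]$, matching the value computed from $\alpha$.

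The case $\alpha \in J_a^R$ is analogous, but the roles of the $\alpha$- and $(1-\alpha)$-orbits with respect to the $D_\alpha/D_\alpha^{\mathsf{c}}$ split are swapped: the $\alpha$-orbit now picks up the single $G$-step and the $(1-\alpha)$-orbit stays in $D_\alpha$ throughout, which accounts for the shift $M \mapsto M+1$, $N \mapsto N-1$ in the exponents. The two cases with $n$ even are handled identically, with the indices of the partial quotients interchanged between odd and even positions, which is exactly why the definitions~\eqref{q:neven} and~\eqref{q:oddn} of $M$ and $N$ have their sums swapped.

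The main technical obstacle is the combinatorial bookkeeping required to verify, at each application of $T_\alpha$, that the current orbit point really lies in the cylinder predicted by its CF tail; in particular that a CF of the form $[1,\cdot]$ falls in $D_\alpha$ while one of the form $[d,1,\cdot]$ with $d \ge 2$ falls in $D_\alpha^{\mathsf{c}}$. This reduces to comparing CF tails against $\alpha = [1, a_1, a_2, \ldots]$ under the alternating ordering, and the hypothesis $a \in \mathcal{R}$ (maximality of $I_a$) is exactly what makes these comparisons go the right way and rules out premature matching that would contradict the declared matching exponents.
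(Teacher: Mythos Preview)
Your outline follows the same route as the paper's proof: compute the $\alpha$- and $(1-\alpha)$-orbits via Lemma~\ref{l:reverserenyi}, locate the single $G$-step via the cylinder analysis, and verify the two orbits merge at $1-[\mathbf w]$. Two points to tighten, however. First, Lemma~\ref{l:entrytimes} does \emph{not} by itself give $T_\alpha^{M-1}(\alpha)=R^{M-1}(\alpha)$: that lemma only says the first entry time into $D_\alpha^{\mathsf c}$, if it exists, lies in the set $\{a_1-1,\,a_1+a_3-1,\,\ldots\}$; ruling out the values below $M-1$ is exactly where the maximality hypothesis $a\in\mathcal R$ enters, via the string inequality $\mathbf v\succ\mathbf u\mathbf v$ for every split $\mathbf a=\mathbf u\mathbf v$ (this is \cite[Proposition~4.5.2]{CT12}, and the paper invokes it explicitly). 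You do flag maximality in your final paragraph, but your sentence ``Lemma~\ref{l:reverserenyi} together with Lemma~\ref{l:entrytimes} then gives $T_\alpha^{M-1}(\alpha)=R^{M-1}(\alpha)$'' overstates what those lemmas deliver. Second, the case $a=1$ must be handled separately: here $J_1^L=\emptyset$ and $J_1^R=(\tfrac12,g)$, so $\alpha<g$, and the inequality $\alpha>g$ used in the general argument (to place $[1,\mathbf w]$ in $D_\alpha$) fails; the paper treats this case by a short direct computation at the start.
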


\begin{proof}
First we consider the special maximal quadratic interval $I_1 = (g,1)$ separately, since $J_1^L = \emptyset$ and $J_1^R = \big( \frac12, g\big)$. Let $\alpha \in J_1^R$. Then $\alpha = [1,1,a_2, a_3, \ldots]$ and $1-\alpha = [2,a_2,a_3, \ldots]$. Note that $M=1$ and $N=2$. From $\alpha < g$ it follows that $\alpha^2+\alpha-1<0$. This implies that $1-\alpha > \frac1{2+\alpha}$, so that $T^{N-1}_\alpha(1-\alpha) =T_\alpha(1-\alpha) = R(1-\alpha)$. It also implies that $\frac12 < \alpha < \frac1{1+\alpha}$ and that
\[ T_\alpha(\alpha) = G(\alpha) = \frac1{\alpha}-1 > \frac1{1+\alpha},\]
so that $T^{M+1}_\alpha(\alpha) = T^2_\alpha(\alpha) = R \circ G (\alpha)$, which by Lemma~\ref{l:reverserenyi} equals $T^{N-1}_\alpha(1-\alpha)$.

\vskip .2cm
Fix $a \in \mathcal R \setminus \{1\}$ and write $a=[a_1,a_2, \ldots, a_n]=[a_1,a_2, \ldots, a_n-1,1]$ for its regular continued fraction expansions. We only prove the statement for $J_a^L$, since the proof for $J_a^R$ is similar. Assume without loss of generality that $n$ is odd. The proof is analogous for $n$ even and the parity is fixed only to determine the endpoints of $J_a^L$. We start by proving that matching cannot occur for indices smaller than $M$ and $N$.

\vskip .2cm
Write $\mathbf a =a_1, a_2,  \ldots ,  a_{n-1}, a_n-1, 1 \in \mathbb N^{n+1}$ and let $\alpha \in J_a^L = ([1,\overline{\mathbf a}], [1,\mathbf a]) $. Then there is some finite or infinite string of positive integers $\mathbf w=a_{n+2}, a_{n+3}, \ldots$, such that $\alpha=[1, \mathbf a , \mathbf w]$. The assumption that $n$ is odd together with the fact that the Gauss map preserves the alternating ordering imply that
\begin{equation}\label{e:conditionw}
\overline{\mathbf a} \succ \mathbf w.
\end{equation}
Assume that $m = \min \{ i \ge 0 \, : \, T^i_{\alpha}(\alpha) \in D_\alpha^{\mathsf{c}}\}$ exists. By Lemma~\ref{l:entrytimes} there is a $j$, such that $m = a_1 + a_3 + \cdots + a_{2j-1}-1$. Assume that $2j-1 < n$. Since $a \in \mathcal R$, the result from \cite[Proposition 4.5.2]{CT12} implies that for any two non-empty strings $\mathbf u$ and $\mathbf v$ such that $\mathbf a=\mathbf u \mathbf v$, the inequality
\begin{equation}\label{e:maximalitystring}
\mathbf v \succ \mathbf u \mathbf v
\end{equation}
holds. By the definition of $m$ it holds that $T^m_\alpha (\alpha) \in D_\alpha^{\mathsf{c}}$. So, using Lemma~\ref{l:reverserenyi} we obtain that
\[ [1, a_{2j}, a_{2j+1}, \ldots] = G(T^m_\alpha (\alpha)) = T^{m+1}_\alpha(\alpha)> \alpha = [1,\mathbf a,\mathbf w].\]
Thus, if we take $\mathbf v=a_{2j}, a_{2j+1}, \ldots, a_n-1, 1$ and $\mathbf u=a_1, a_2, \ldots, a_{2j-1}$, then we find $\mathbf v \preceq \mathbf u \mathbf v$, which contradicts \eqref{e:maximalitystring}. Hence, if $m$ exists, then $m \ge M-1$. In a similar way we can deduce that if $k = \min \{ i \ge 0 \, : \, T^i_\alpha (1-\alpha ) \in D_\alpha^{\mathsf{c}} \}$ exists, then $k \ge N-2$.

\vskip .2cm
Now assume that there exist $\ell < M-1$ and $i < N-2$, such that
\begin{equation}\label{q:matchingbefore}
T_\alpha^\ell(\alpha) = R^\ell (\alpha) =  R^i(1-\alpha) = T_\alpha^i (1-\alpha).
\end{equation}
Recall from Remark~\ref{r:parity} that $R$ preserves the parity of the regular continued fraction digits. Since $\alpha = [1,a_1,a_2, \ldots]$ and $1-\alpha = [a_1+1,a_2,a_3, \ldots]$, the assumption \eqref{q:matchingbefore} then implies the existence of an even index $2 \le j \le n-1 $ and an odd index $1 \le \ell < n$, such that
\[ a_j, a_{j+1}, a_{j+2} , \ldots = a_\ell, a_{\ell+1}, a_{\ell +2}, \ldots.\]
This implies that $a$ has an ultimately periodic regular continued fraction expansion, which contradicts the fact that $a \in \mathbb Q$. Hence, matching cannot occur with indices $\ell < M-1$ and $i < N-2$.

\vskip .2cm
Next consider $T^M_\alpha (\alpha)$ and $T^{N-2}_\alpha (1-\alpha)$. From Lemma~\ref{l:reverserenyi} we get that
\[ T^{M-1}_\alpha(\alpha) = R^{a_1 + a_3 + \cdots + a_n-1}(\alpha) = [2,\mathbf w].\]
From $\alpha = [1,\mathbf a, \mathbf w] > g$, it follows that $G(\alpha) = [\mathbf a , \mathbf w] < g$. Combining this with the fact that the property from \eqref{e:conditionw} implies $\mathbf w \prec \mathbf a \mathbf w$ gives $\mathbf w \prec \mathbf a \mathbf w \prec 1 \mathbf a \mathbf w $. Hence, $T^{M-1}_\alpha(\alpha)  > [2, 1, \mathbf a, \mathbf w] = \frac1{2+\alpha}$. This implies that $T^M_\alpha(\alpha) = R (T^{M-1}_\alpha(\alpha)) = R([2, \mathbf w])$. For $1-\alpha =[a_1+1, a_2, a_3, \ldots, a_{n-1}, a_n-1, 1, \mathbf w]$ we get from Lemma~\ref{l:reverserenyi} that
\[ T_\alpha^{N-2}(1-\alpha) = R^{N-2}(1-\alpha) = [a_n, 1, \mathbf w].\]
Again using that $\mathbf w \prec \mathbf a \mathbf w$ gives $T_\alpha^{N-2}(1-\alpha) \in \Delta(1,a_n) = ([a_n,1],[a_n,1,\mathbf a, \mathbf w])$. Since $\alpha > g$, it follows that $T^N_\alpha (1-\alpha) = R \circ G (T_\alpha^{N-2}(1-\alpha))$. Then, again by using Lemma~\ref{l:reverserenyi}, we obtain
\[ T^N_\alpha (1-\alpha) = R([1, \mathbf w]) = R([2,\mathbf w]) = T^M_\alpha (\alpha).\]

\vskip .2cm
For $\alpha \in J_a^R$, one can show similarly that $T^{M-1}_\alpha(\alpha) = [1,1,\mathbf w] \in \Delta(1,1)$. Since $\alpha > g$, this gives $T^{M+1}_\alpha(\alpha) = R \circ G (T^{M-1}_\alpha(\alpha)) = R([1,\mathbf w])$.
On the other hand, $T^{N-2}_\alpha(1-\alpha) = R^{N-2}_\alpha(1-\alpha) = [a_n+1, \mathbf w] > \frac1{a_n+1+\alpha}$. So,
\[ T^{N-1}_\alpha(1-\alpha) = R^{N-1}_\alpha (1-\alpha) = R ([a_n+1, \mathbf w] ) = R([1, \mathbf w]) = T^{M+1}_\alpha (\alpha). \qedhere \]
\end{proof}

From this theorem we obtain the result from Theorem~\ref{t:main2} on the size of the set of non-matching parameters. We use $\dim_H(A)$ to denote the Hausdorff dimension of a set $A$ and let $\mathcal{E}$ denote the non-matching set, that is,
\[\mathcal{E} = \{ \alpha \in (0,1) \, : \, T_\alpha \text{ does not have the matching property} \}.\]

\begin{proof}[Proof of Theorem~\ref{t:main2}]
We use known results on the exceptional set $\mathcal N$ of non-matching parameters for Nakada's $\alpha$-continued fraction maps from \eqref{q:nakada}. It is proven in \cite{CT12} and \cite{KSS12} that $\lambda(\mathcal N)=0$ and in \cite[Theorem 1.2]{CT12} that $\dim_H(\mathcal N)=1$. Since the bi-Lipschitz map $x \mapsto \frac{1}{1+x}$ on $(0,1)$ preserves Lebesgue null sets and Hausdorff dimension, the same properties hold for the set $E:=\frac1{1+\mathcal N}$. Note that $T_\alpha$ has matching for all $\alpha \in E^{\mathsf{c}}$, since according to Theorem~\ref{t:acf} either $\alpha$ is in a matching interval or it is of the form $\frac1{1+a}$ for some rational number $a$ and then both $\alpha$ and $1-\alpha$ eventually get mapped to 1. Hence, $\mathcal E \subseteq E$ and it follows that $\lambda(\mathcal E)=0$.

\vskip .2cm
Now consider $E \setminus \mathcal E$. Let $a \in \mathcal N$. By~\cite[Section 4]{KSS12}, this is equivalent to $G^n(a) \geq a$ for all $n \geq 1$. Let $\alpha:=\frac{1}{1+a}$ and write $[1,a_1,a_2, \ldots]$ for its regular continued fraction expansion. Suppose there exists a minimal $m\ge 0$ such that $T^m_{\alpha}(\alpha) \in D_\alpha^{\mathsf{c}}$. Then there exists a positive integer $d$ such that
\[\frac{1}{d+1} < T^m_{\alpha}(\alpha) < \frac{1}{d+\alpha}.\]
By Lemma~\ref{l:reverserenyi} the inequality implies in particular that for some $j > 2$
\[ [a_j, a_{j+1}, \ldots] < [a_1, a_2, \ldots],\]
i.e., $G^{j-1}(a) < a$, which contradicts the assumption on $a$. Hence, $T^k_{\alpha}(\alpha) \not \in D_\alpha^{\mathsf{c}}$ for all $k \ge 0$. Since the regular continued fraction expansion of $1-\alpha$ is given by $1-\alpha= [a_1+1, a_2, \ldots ]$, the same conclusion holds for $1-\alpha$, that is, $T^k_{\alpha}(\alpha) \not \in D_\alpha^{\mathsf{c}}$ for all $k \ge 0$. Hence, $T^k_{\alpha}(\alpha)= R^k(\alpha)$ and $T^k_{\alpha}(1-\alpha)= R^k(1-\alpha)$ for all $k$. Assume that $\alpha \not \in \mathcal E$, so there are positive integers $M, N$ such that $T_{\alpha}^M(\alpha)= T_{\alpha}^N(1-\alpha)$. By Remark~\ref{r:parity} there is an odd index $\ell \ge 1$ and an even index $ k \ge 2 $ such that
\[ a_{\ell}, a_{\ell+1}, \ldots = a_k, a_{k+1}, \ldots .\]
Therefore $\alpha$ is ultimately periodic and thus a preimage of a quadratic irrational. This implies that $\dim_H ( E \setminus \mathcal E \big)=0$ and hence $\dim_H(\mathcal E)=1$.
\end{proof}

These matching results are the main reason for the existence of the nice geometric versions of the natural extensions that we investigate in the next section.

\section{Natural extensions}\label{s:ne}
For non-invertible dynamical systems, especially for continued fraction transformations, the natural extension is a very useful tool to obtain dynamical properties of the system. Roughly speaking, the natural extension of a system is the minimal invertible dynamical system that contains the original system as a subsystem. Canonical constructions of the natural extension were first studied by Rohlin in \cite{Roh61}. Based on these results it was shown in \cite{Si88, ST91} that for infinite measure systems like $T_\alpha$ a natural extension always exists and that any two natural extensions of the same system are necessarily isomorphic. Moreover, many ergodic properties carry over from the natural extension to the original map. The amount of information on the original system that can be gained from the natural extension, depends to a large extent on the version of the natural extension one considers. For continued fraction maps, there is a canonical construction that has led to many useful observations; see for example \cite{Nak81,Kra91,KSS12,AS, Ha02}. It turns out that a similar construction also works for the family $\{T_\alpha \}_{\alpha \in (0, 1)}$.

\vskip .2cm
In this section we construct a natural extension for the system $(I_{\alpha},  \mathcal B_{\alpha}, \mu_{\alpha}, T_{\alpha})$, where $\mathcal B_{\alpha}$ is the Borel $\sigma-$algebra on $I_{\alpha}$ and $\mu_{\alpha}$ is the measure from Proposition~\ref{t:acim}. This natural extension is given by the dynamical system $(\mathcal D_\alpha, \mathcal B(\mathcal D_\alpha), \nu_\alpha , \mathcal T_\alpha)$, where $\mathcal D_\alpha$ is some domain in $\mathbb R^2$ that needs to be determined, $\mathcal B(\mathcal D_\alpha)$ is the Borel $\sigma$-algebra on $\mathcal D_\alpha$, $\nu_\alpha$ is the measure defined by
\begin{equation} \label{e:canme}
\nu_\alpha (A) =  \iint_A \frac1{(1+xy)^2} \ d\lambda^2(x,y) \quad \text{ for any } A \in \mathcal B(\mathcal D_\alpha),
\end{equation}
where $\lambda^2$ is the two-dimensional Lebesgue measure, and $\mathcal T_\alpha : \mathcal D_\alpha \to \mathcal D_\alpha$ is given by
\[ \mathcal T_\alpha (x,y) = \bigg(T_{\alpha}(x), \frac{\epsilon_1(x)}{d_1(x)+y}\bigg).\]
To prove that $(\mathcal D_\alpha, \mathcal B(\mathcal D_\alpha), \nu_\alpha, \mathcal T_\alpha)$ is the natural extension of $(I_{\alpha},  \mathcal B_{\alpha}, \mu_{\alpha}, T_{\alpha})$ we need to show that $\nu_\alpha$ is $\mathcal T_\alpha$-invariant and that all of the following properties hold $\nu_\alpha$-almost everywhere:
\begin{itemize}
\item[(ne1)] $\mathcal T_\alpha$ is invertible;
\item[(ne2)] the projection map $\pi: \mathcal D_\alpha \to I_{\alpha}$ is measurable and surjective;
\item[(ne3)] $\pi \circ \mathcal T_\alpha = T_\alpha \circ \pi$, where $\pi$ is the projection onto the first coordinate;
\item[(ne4)] $\bigvee_{n=0}^\infty \mathcal T_\alpha^n \pi^{-1}(\mathcal B_{\alpha}) = \mathcal B(\mathcal D_\alpha)$, where $\bigvee_{n=0}^\infty \mathcal T_\alpha^n \pi^{-1}(\mathcal B_{\alpha})$ is the smallest $\sigma$-algebra containing the $\sigma$-algebras $\mathcal T_\alpha^n \pi^{-1}(\mathcal B_{\alpha})$ for all $n \ge 0$.
\end{itemize}

\vskip .2cm
The shape of $\mathcal D_\alpha$ will depend on the orbits of $\alpha$ and $1-\alpha$ up to the moment of matching. As might be imagined in light of Proposition~\ref{p:matching1} and Theorem~\ref{t:acf}, the situation for $0 < \alpha < \frac12$ is simpler than for $\frac12 < \alpha < 1$. We will provide a detailed description and proof for $0 < \alpha < \frac12$ and list some analytical and numerical results for $\frac12 < \alpha < 1$.

\subsection{Natural extension for $\mathbf{T_{\alpha}}$ for $ \mathbf{\alpha < \frac12}$} We claim that for $\alpha < \frac12$ the domain of the natural extension is given by
\[ \mathcal{D}_{\alpha}:= \bigg[\alpha, \frac{\alpha}{1-\alpha}\bigg] \times [0, \infty ) \cup \bigg(\frac{\alpha}{1-\alpha}, 1-\alpha\bigg] \times [0,1] \cup \bigg(1-\alpha, 1 \bigg] \times [-1, 1], \]
see Figure~\ref{f:ne1}. Before we check (ne1)--(ne4), we introduce some notation. Partition $D_\alpha$ according to the cylinder sets of $T_{\alpha}$ described in \eqref{q:cylinders}. Let
\[ \tilde \Delta(-1, 2) = \Delta(-1,2) \times (-1,1), \quad \tilde \Delta(1,1) = \Big( \frac12, 1-\alpha \Big) \times (0,1) \cup \Big( 1-\alpha, \frac1{1+\alpha} \Big) \times (-1,1),\]
and for $(\epsilon, d) \notin \{(1,1), (-1,2)\}$,
\[ \tilde \Delta(\epsilon, d) = \begin{cases}
\Delta(\epsilon, d) \times (0,1), & \text{if } \Delta(\epsilon,d) \subseteq \big[ \frac{\alpha}{1-\alpha}, 1-\alpha \big],\\
\Delta(\epsilon, d) \times (0, \infty), & \text{if } \Delta(\epsilon,d) \subseteq \big[\alpha, \frac{\alpha}{1-\alpha}\big],
\end{cases}\]
and for the $\epsilon$ and $d$ such that $\frac{\alpha}{1-\alpha} \in \Delta(\epsilon,d)$,
\[ \tilde \Delta_L(\epsilon,d) =\Big( \Delta(\epsilon,d)\cap \Big[\alpha, \frac{\alpha}{1-\alpha} \Big] \Big) \times (0, \infty), \quad \tilde \Delta_R(\epsilon,d) =\Big( \Delta(\epsilon,d)\cap \Big[\frac{\alpha}{1-\alpha},1 \Big] \Big) \times (0, 1).\]
Due to the matching property described in Proposition~\ref{p:matching1} we have, up to a Lebesgue measure zero set,
\begin{itemize}
\item $ \mathcal{T}_{\alpha} ( \tilde \Delta(1,1))= \big( \alpha, \frac{\alpha}{1-\alpha} \big) \times \big(\frac12, \infty \big) \cup \big( \frac{\alpha}{1-\alpha}, 1 \big) \times \big(\frac12, 1 \big)$,
\item $ \bigcup_{d\ge 2} \mathcal{T}_{\alpha} ( \tilde \Delta(-1,d))= (1-\alpha, 1) \times (-1, 0)$,
\item $ \bigcup_{d \ge 1} \mathcal{T}_{\alpha} ( \tilde \Delta(1,d))= (\alpha, 1) \times (0,1)$,
\end{itemize}
where we have included the sets $\tilde \Delta_L(\epsilon, d)$ and $\tilde \Delta_R(\epsilon,d)$ in the appropriate union.
Hence, $\mathcal T_\alpha$ is Lebesgue almost everywhere invertible, which gives (ne1).

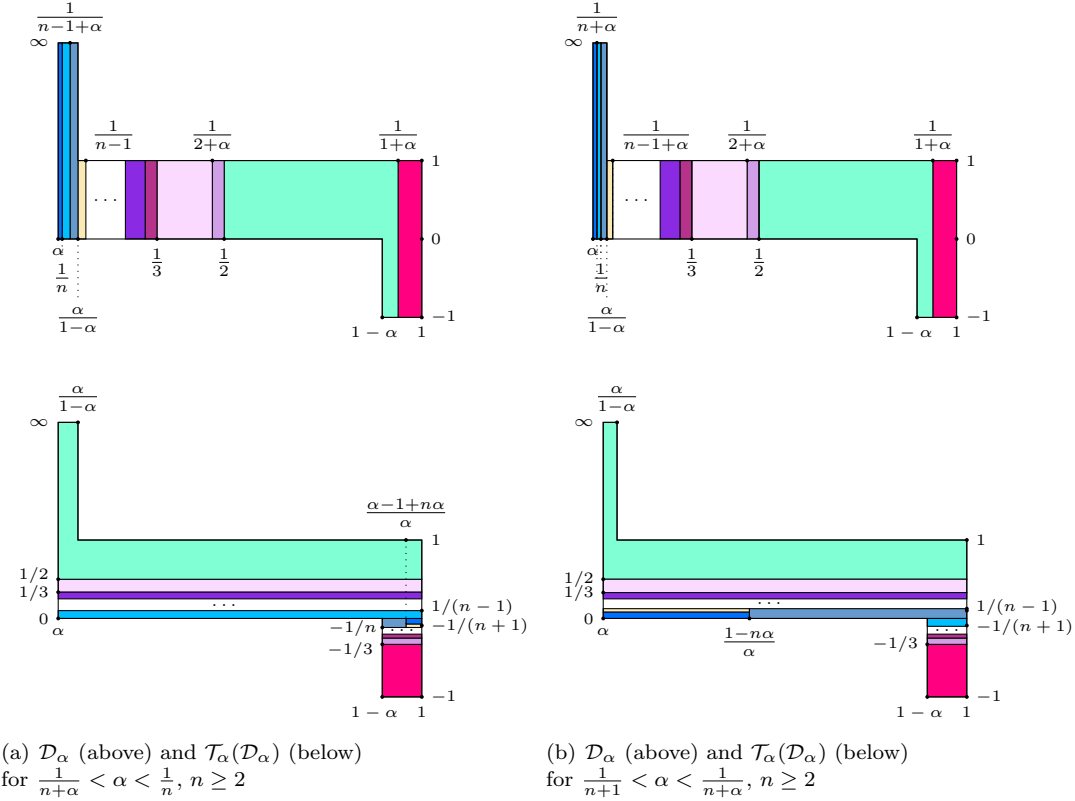
\begin{figure}
\begin{center}
\subfigure{
\begin{tikzpicture}[scale=5.2]
\draw[fill=brightpink] (.94,-.2) -- (1,-.2) -- (1,.2) -- (.94,.2)--cycle;
\draw[fill=aquamarine] (.9,-.2) -- (.94,-.2) -- (.94,.2) -- (.5,.2) -- (.5,0) -- (.9,0)--cycle;
\draw[fill=brightube] (.47,0)--(.5,0)--(.5,.2)--(.47,.2)--cycle;
\draw[fill=brilliantlavender] (.33,0) -- (.47,0) -- (.47,.2) -- (.33,.2)--cycle;
\draw[fill=fandango](.3,0)--(.33,0)--(.33,.2)--(.3,.2)--cycle;
\draw[fill=blueviolet](.25,0)--(.3,0)--(.3,.2)--(.25,.2)--cycle;

\draw[fill=bananamania] (.13,0) -- (.15,0) -- (.15,.2) -- (.13,.2)--cycle;
\draw[fill=bluegray] (.11,0) -- (.13,0) -- (.13,.5) -- (.11,.5)--cycle;
\draw[fill=capri] (.09,0) -- (.11,0) -- (.11,.5) -- (.09,.5)--cycle;
\draw[fill=brandeisblue] (.08,0) -- (.09,0) -- (.09,.5) -- (.08,.5)--cycle;

\draw[draw=black] (.08,0)--(.9,0)--(.9,-.2)--(1,-.2)--(1,.2)--(.13,.2)--(.13,.5)--(.08,.5)--(.08,0);
\node[below] at(.08,0){\tiny $\alpha$};
\node[below]at(.88,-0.2){\tiny $1-\alpha$};
\node[below]at(1,-.2){\tiny $1$};
\node[right]at(1,0){\tiny $0$};
\node[right]at(1,-.2){\tiny $-1$};
\node[right]at(1,.2){\tiny $1$};
\node[left]at(.08,.5){\tiny $\infty$};
\node[below]at(.5,0){\small $\frac{1}{2}$};
\node[below]at(.33,0){\small $\frac{1}{3}$};
\draw[dotted](.09,0)--(.09,-.04)node[below]{\small $\frac{1}{n}$};
\node[above]at(.11,.5){\small $\frac{1}{n-1+\alpha}$};
\node[above]at(.22,.2){\small $\frac{1}{n-1}$};
\node[above]at(.94,.2){\small $\frac{1}{1+\alpha}$};
\node[above]at(.47,.2){\small $\frac{1}{2+\alpha}$};
\node[right]at(1,.03){\tiny $\textcolor{white}{1/(n-1)}$};
\node[right]at(1,-.019){\tiny $\textcolor{white}{-1/(n+1)}$};
\draw[dotted](.13,0)--(.13,-.15)node[below]{\small $\frac{\alpha}{1-\alpha}$};
\filldraw[draw=black, fill=black] (.08,0) circle (.1pt);

\filldraw[draw=black, fill=black] (.09,0) circle (.1pt);
\filldraw[draw=black, fill=black] (.15,.2) circle (.1pt);
\filldraw[draw=black, fill=black] (.33,0) circle (.1pt);
\filldraw[draw=black, fill=black] (.5,0) circle (.1pt);
\filldraw[draw=black, fill=black] (.9,-.2) circle (.1pt);
\filldraw[draw=black, fill=black] (1,0) circle (.1pt);
\filldraw[draw=black, fill=black] (1,-.2) circle (.1pt);
\filldraw[draw=black, fill=black] (1,.2) circle (.1pt);
\filldraw[draw=black, fill=black] (.13,0) circle (.1pt);
\filldraw[draw=black, fill=black] (.94,.2) circle (.1pt);
\filldraw[draw=black, fill=black] (.47,.2) circle (.1pt);
\filldraw[draw=black, fill=black] (.11,.5) circle (.1pt);
\node at(.2,.1){\tiny $\ldots$};
\end{tikzpicture}}
\subfigure{
\begin{tikzpicture}[scale=5.2]
\draw[fill=brightpink] (.94,-.2) -- (1,-.2) -- (1,.2) -- (.94,.2)--cycle;
\draw[fill=aquamarine] (.9,-.2) -- (.94,-.2) -- (.94,.2) --(.5,.2)--(.5,0)-- (.9,0)--cycle;

\draw[fill=brightube] (.47,0)--(.5,0)--(.5,.2)--(.47,.2)--cycle;
\draw[fill=brilliantlavender] (.33,0) -- (.47,0) -- (.47,.2) -- (.33,.2)--cycle;
\draw[fill=fandango] (.3,0)--(.33,0)--(.33,.2)--(.3,.2)--cycle;
\draw[fill=blueviolet] (.25,0)--(.3,0)--(.3,.2)--(.25,.2)--cycle;

\draw[fill=bananamania] (.115,0) -- (.13,0) -- (.13,.2) -- (.115,.2)--cycle;
\draw[fill=bluegray] (.1,0) -- (.115,0) -- (.115,.5) -- (.1,.5)--cycle;
\draw[fill=capri] (.09,0) -- (.1,0) -- (.1,.5) -- (.09,.5)--cycle;
\draw[fill=brandeisblue] (.08,0) -- (.09,0) -- (.09,.5) -- (.08,.5)--cycle;

\draw [draw=black] (.08,0)--(.9,0)--(.9,-.2)--(1,-.2)--(1,.2)--(.115,.2)--(.115,.5)--(.08,.5)--(.08,0);
\node[below] at(.08,0){\tiny $\alpha$};
\draw[dotted](.115,0)--(.115,-.15)node[below]{\small $\frac{\alpha}{1-\alpha}$};
\node[below]at(.88,-.2){\tiny $1-\alpha$};
\node[right]at(1,0){\tiny $0$};
\node[right]at(1,-.2){\tiny $-1$};
\node[below]at(1,-.2){\tiny $1$};
\node[right]at(1,.2){\tiny $1$};
\node[left]at(.08,.5){\tiny $\infty$};
\node[below]at(.5,0){\small $\frac{1}{2}$};
\node[below]at(.33,0){\small $\frac{1}{3}$};
\node[above]at(.09,.5){\small $\frac{1}{n+\alpha}$};
\draw[dotted](.1,0)--(.1,-.04)node[below]{\small $\frac{1}{n}$};
\node[above]at(.24,.2){\small $\frac{1}{n-1+\alpha}$};
\node[above]at(.94,.2){\small $\frac{1}{1+\alpha}$};
\node[above]at(.47,.2){\small $\frac{1}{2+\alpha}$};
\node[right]at(1,.03){\tiny $\textcolor{white}{1/(n-1)}$};
\node[right]at(1,-.019){\tiny $\textcolor{white}{-1/(n+1)}$};
\draw[dotted](.09,0)--(.09,-.12)(.1,0)--(.1,-.06);
\filldraw[draw=black, fill=black] (.08,0) circle (.1pt);
\filldraw[draw=black, fill=black] (.09,.5) circle (.1pt);
\filldraw[draw=black, fill=black] (.1,0) circle (.1pt);
\filldraw[draw=black, fill=black] (.33,0) circle (.1pt);
\filldraw[draw=black, fill=black] (.5,0) circle (.1pt);
\filldraw[draw=black, fill=black] (.9,-.2) circle (.1pt);
\filldraw[draw=black, fill=black] (1,0) circle (.1pt);
\filldraw[draw=black, fill=black] (1,-.2) circle (.1pt);
\filldraw[draw=black, fill=black] (1,.2) circle (.1pt);
\filldraw[draw=black, fill=black] (.115,0) circle (.1pt);
\filldraw[draw=black, fill=black] (.94,.2) circle (.1pt);
\filldraw[draw=black, fill=black] (.47,.2) circle (.1pt);
\filldraw[draw=black, fill=black] (.13,.2) circle (.1pt);
\draw[dotted](.13,0)--(.13,.2);
\draw(.09,0)--(.09,.5)(.1,0)--(.1,.5)(.115,0)--(.115,.5)(.13,0)--(.13,.2);
\draw(.5,0)--(.5,.2)(.33,0)--(.33,.2);
\node at(.19,.1){\tiny$\ldots$};
\end{tikzpicture}}
\setcounter{subfigure}{0}
\subfigure[$\mathcal{D}_{\alpha}$ (above) and $\mathcal{T}_{\alpha}(\mathcal{D}_{\alpha})$ (below) \newline for $\frac{1}{n+\alpha} < \alpha < \frac{1}{n}$, $n \geq 2$]{
\begin{tikzpicture}[scale=5.2]
\draw[fill=brightpink] (.9,-.2) -- (1,-.2) -- (1,-.066) -- (.9,-.066)--cycle;
\draw[fill=brightube] (.9,-.066) -- (1,-.066) -- (1,-.05) -- (.9,-.05) -- cycle;
\draw[fill=fandango](.9,-.05)--(1,-.05)--(1,-.04)--(.9,-.04)--cycle;

\draw[fill=bananamania] (.96,-.023) -- (1,-.023) -- (1,-.014) -- (.96,-.014)--cycle;
\draw[fill=brandeisblue] (.96,-.014) -- (1,-.014) -- (1,0) -- (.96,0)--cycle;
\draw[fill=bluegray] (.9,-.023) -- (.96,-.023) -- (.96,0) -- (.9,0)--cycle;

\draw[fill=capri] (.08,0) -- (1,0) -- (1,.02) -- (.08,.02)--cycle;

\draw[fill=brilliantlavender] (.08,.067)-- (1,.067)--(1,.1) -- (.08,.1) -- cycle;
\draw[fill=blueviolet](.08,.05)--(1,.05)--(1,.067)--(.08,.067)--cycle;
\draw[fill=aquamarine] (.08,.1) -- (1,.1) --(1,.2) -- (.13,.2) -- (.13,.5) -- (.08,.5)--cycle;

\draw [draw=black] (.08,0)--(.9,0)--(.9,-.2)--(1,-.2)--(1,.2)--(.13,.2)--(.13,.5)--(.08,.5)--(.08,0);
\node[below] at(.08,0){\tiny $\alpha$};
\node[above]at(.13,.5){\small $\frac{\alpha}{1-\alpha}$};
\node[below]at(.88,-.2){\tiny $1-\alpha$};
\node[left]at(.08,0){\tiny $0$ };
\node[right]at(1,-.2){\tiny $-1$};
\node[right]at(1,.2){\tiny $1$};
\node[below]at(1,-.2){\tiny $1$};
\node[left]at(.08,.5){\tiny $\infty$};
\node[left]at(.08,.11){\tiny $1/2$};
\node[left]at(.08,.066){\tiny $1/3$};
\node[right]at(1,.03){\tiny $1/(n-1)$};
\node[right]at(1,-.019){\tiny $-1/(n+1)$};
\node[left]at(.91,-.025){\tiny $-1/n$};
\draw[dotted](.96,-.014)--(.96,.21)node[above]{\small $\frac{\alpha-1+n\alpha}{\alpha}$};
\node[left]at(.9,-.08){\tiny $-1/3$};
\filldraw[draw=black, fill=black] (.08,0) circle (.1pt);
\filldraw[draw=black, fill=black] (.13,.5) circle (.1pt);
\filldraw[draw=black, fill=black] (.08,.1) circle (.1pt);
\filldraw[draw=black, fill=black] (.08,.067) circle (.1pt);
\filldraw[draw=black, fill=black] (1,-.2) circle (.1pt);
\filldraw[draw=black, fill=black] (1,.02) circle (.1pt);
\filldraw[draw=black, fill=black] (1,-.018) circle (.1pt);
\filldraw[draw=black, fill=black] (.9,-.023) circle (.1pt);
\filldraw[draw=black, fill=black] (.9,-.2) circle (.1pt);
\filldraw[draw=black, fill=black] (.96,.2) circle (.1pt);
\filldraw[draw=black, fill=black] (.9,-.066) circle (.1pt);

\node at(.5,.035){\tiny$\ldots$};
\node at(.95,-.03){\tiny $\ldots$};
\end{tikzpicture}}
\subfigure[$\mathcal{D}_{\alpha}$ (above) and $\mathcal{T}_{\alpha}(\mathcal{D}_{\alpha})$ (below) \newline for $\frac{1}{n+1} < \alpha < \frac{1}{n+ \alpha}$, $n \ge 2$]{
\begin{tikzpicture}[scale=5.2]
\draw[fill=brightpink] (.9,-.2) -- (1,-.2) -- (1,-.066) -- (.9,-.066)--cycle;
\draw[fill=brightube] (.9,-.066)--(1,-.066)--(1,-.05)--(.9,-.05)--cycle;
\draw[fill=fandango] (.9,-.05)--(1,-.05)--(1,-.04)--(.9,-.04)--cycle;

\draw[fill=aquamarine] (.08,.1) -- (1,.1)--(1,.2) -- (.115,.2) -- (.115,.5) -- (.08,.5)--cycle;
\draw[fill=brilliantlavender] (.08,.066)--(1,.066)--(1,.1)--(.08,.1)--cycle;
\draw[fill=blueviolet] (.08,.05)--(1,.05)--(1,.066)--(.08,.066)--cycle;

\draw[fill=bananamania] (.08,.016) -- (.45,.016) -- (.45,.025) -- (.08,.025)--cycle;
\draw[fill=brandeisblue] (.08,0) -- (.45,0) -- (.45,.016) -- (.08,.016)--cycle;
\draw[fill=bluegray] (.45,0)--(1,0)--(1,.025)--(.45,.025)--cycle;
\draw[fill=capri] (.9,0) -- (1,0) -- (1,-.02) -- (.9,-.02)--cycle;

\draw [draw=black] (.08,0)--(.9,0)--(.9,-.2)--(1,-.2)--(1,.2)--(.115,.2)--(.115,.5)--(.08,.5)--(.08,0);
\node[below] at(.08,0){\tiny $\alpha$};
\node[above]at(.115,.5){\small $\frac{\alpha}{1-\alpha}$};
\node[below]at(.88,-.2){\tiny $1-\alpha$};
\node[left]at(.08,0){\tiny $0$};
\node[right]at(1,-.2){\tiny $-1$};
\node[below]at(1,-.2){\tiny $1$};
\node[right]at(1,.2){\tiny $1$};
\node[left]at(.08,.5){\tiny $\infty$};
\node[left]at(.08,.1){\tiny $1/2$};
\node[left]at(.08,.066){\tiny $1/3$};
\node[right]at(1,.0295){\tiny $1/(n-1)$};
\node[right]at(1,-.021){\tiny $-1/(n+1)$};
\node[below]at(.45,0){\small $\frac{1-n\alpha}{\alpha}$};
\node[left]at(.9,-.066){\tiny $-1/3$};
\filldraw[draw=black, fill=black] (.08,0) circle (.1pt);
\filldraw[draw=black, fill=black] (.115,.5) circle (.1pt);
\filldraw[draw=black, fill=black] (.08,.1) circle (.1pt);
\filldraw[draw=black, fill=black] (.08,.066) circle (.1pt);
\filldraw[draw=black, fill=black] (1,-.2) circle (.1pt);
\filldraw[draw=black, fill=black] (1,.2) circle (.1pt);
\filldraw[draw=black, fill=black] (1,.025) circle (.1pt);
\filldraw[draw=black, fill=black] (1,.02) circle (.1pt);
\filldraw[draw=black, fill=black] (1,-.018) circle (.1pt);
\filldraw[draw=black, fill=black] (.9,-.2) circle (.1pt);
\filldraw[draw=black, fill=black] (.45,0) circle (.1pt);
\filldraw[draw=black, fill=black] (.9,-.066) circle (.1pt);
\node at(.5,.04){\tiny $\ldots$};
\node at(.95,-.03){\tiny$\ldots$};
\end{tikzpicture}}
\caption{The transformation $\mathcal{T}_{\alpha}$ maps areas on the top to areas on the bottom with the same colours, respectively.}
\label{f:ne1}
\end{center}
\end{figure}

\vskip .2cm
The properties (ne2) and (ne3) follow immediately. Left to prove are (ne4) and the fact that $\nu_\alpha$ is $\mathcal T_\alpha$-invariant. To prove that $\nu_\alpha$ is invariant for $\mathcal{T}_\alpha$, it suffices to check that $\nu_\alpha(A) = \nu_\alpha(\mathcal T^{-1}_\alpha(A))$ for any rectangle $A=[a,b]\times [c,d]\subseteq \mathcal T_\alpha (D)$ for any $D = \tilde \Delta(\epsilon,d)$, $D = \tilde \Delta_L(\epsilon,d)$ or $D = \tilde \Delta_R(\epsilon,d)$. This computation is very similar to the corresponding ones for natural extensions of other continued fraction maps that can be found in literature. We refer to \cite[Theorem 1]{Nak81} for an example of such a computation.

\vskip .2cm
To prove that (ne4) holds, it is enough to show that $\bigvee_{n=0}^\infty \mathcal T_\alpha^n \pi^{-1}(\mathcal B_\alpha)$ separates points, i.e., that for $\lambda^2$-almost all $(x,y),(x',y') \in \mathcal D_\alpha$ with $(x,y)\neq (x',y')$ there are disjoint sets $A,B \in \bigvee_{n=0}^\infty \mathcal T_\alpha^n \pi^{-1}(\mathcal B_\alpha)$ with $(x,y)\in A$ and $(x',y') \in B$. Since $\mathcal B_\alpha$ is separating, the property is clear if $x \neq x'$. Furthermore, note that for $\lambda$-almost all values of $y$ there is an $\varepsilon$ and a $ d$, such that on a neighbourhood of $(x,y)$, the inverse of $\mathcal T_\alpha$ is given by
\[
\mathcal T_\alpha^{-1}(x,y) = \left(\frac{1}{d+\varepsilon x} , \frac{\varepsilon}{y}-d\right).
\]
The map $\frac{\varepsilon}{y}-d$ is expanding and $\mathcal T_\alpha^{-1}$ maps horizontal strips to vertical strips. Hence, we can also separate points that agree on the $x$-coordinate, giving (ne4). Therefore, we have obtained the following result.
\begin{thm}
Let $\alpha \in \big(0, \frac12\big)$. The dynamical system $(\mathcal D_\alpha, \mathcal B(\mathcal D_\alpha), \nu_\alpha, \mathcal T_\alpha)$ is a version of the natural extension of the dynamical system $(I_\alpha, \mathcal B_\alpha, \mu_\alpha, T_\alpha)$ where $\mu_\alpha :=\nu_\alpha \circ \pi^{-1}$.
\end{thm}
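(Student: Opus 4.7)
The plan is to verify the four conditions (ne1)--(ne4) together with the $\mathcal{T}_\alpha$-invariance of $\nu_\alpha$, and then identify the factor measure $\mu_\alpha = \nu_\alpha \circ \pi^{-1}$ with the density $f_\alpha$ from Theorem~\ref{t:main1} in the case $\alpha < \tfrac12$.

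First I would establish (ne1), namely that $\mathcal{T}_\alpha$ is a bijection (mod $\lambda^2$) on $\mathcal{D}_\alpha$. The idea is to partition $\mathcal{D}_\alpha$ into the rectangles $\tilde\Delta(\epsilon,d)$ (and the two pieces $\tilde\Delta_L(\epsilon,d)$, $\tilde\Delta_R(\epsilon,d)$ straddling the fibre height change at $x = \tfrac{\alpha}{1-\alpha}$), and then compute the image of each under $\mathcal{T}_\alpha$ using the formula $\mathcal{T}_\alpha(x,y) = (T_\alpha(x), \epsilon_1(x)/(d_1(x)+y))$. The matching identity $T_\alpha(\alpha) = T_\alpha^2(1-\alpha)$ from Proposition~\ref{p:matching1} is what makes the three image strips listed just before the statement ($(\alpha, \tfrac{\alpha}{1-\alpha}) \times (\tfrac12,\infty)$, the band $(\tfrac{\alpha}{1-\alpha},1) \times (\tfrac12, 1)$, the negative slab $(1-\alpha, 1)\times(-1, 0)$, and the positive strip $(\alpha,1)\times(0,1)$) tile $\mathcal{D}_\alpha$ exactly. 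Checking this geometric matching of images is the main obstacle of the proof; everything else is routine bookkeeping.

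Properties (ne2) and (ne3) are immediate from the definition of $\pi$ as projection onto the first coordinate and the definition of $\mathcal{T}_\alpha$. For the $\mathcal{T}_\alpha$-invariance of $\nu_\alpha$, I would reduce to checking $\nu_\alpha([a,b]\times[c,d]) = \nu_\alpha(\mathcal{T}_\alpha^{-1}([a,b]\times[c,d]))$ for rectangles contained in a single image $\mathcal{T}_\alpha(\tilde\Delta(\epsilon,d))$ (or the $L,R$ pieces). On such a rectangle the inverse has the form $(x,y) \mapsto (\tfrac{1}{d+\epsilon x}, \tfrac{\epsilon}{y}-d)$; the Jacobian is $\tfrac{1}{x^2 y^2}$, and a direct calculation shows that under the substitution the integrand $(1+xy)^{-2}$ transforms correctly. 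This is the standard computation carried out in \cite[Theorem 1]{Nak81}, which I would simply cite.

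Finally, for (ne4) it is enough, by a monotone-class argument, to show that $\bigvee_{n\ge 0} \mathcal{T}_\alpha^n \pi^{-1}(\mathcal{B}_\alpha)$ separates $\lambda^2$-almost every pair of points in $\mathcal{D}_\alpha$. Two points with distinct first coordinates are already separated by $\pi^{-1}(\mathcal{B}_\alpha)$. For points with equal first coordinate but distinct second coordinate, I apply the inverse: on each image rectangle $\mathcal{T}_\alpha^{-1}$ is of the form $(x,y) \mapsto (\tfrac{1}{d+\epsilon x}, \tfrac{\epsilon}{y}-d)$, in which the $y$-map $y \mapsto \tfrac{\epsilon}{y}-d$ is expanding, so after finitely many applications the second coordinates separate and fall into different atoms of $\pi^{-1}(\mathcal{B}_\alpha)$. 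Pulling back by the corresponding $\mathcal{T}_\alpha^n$ produces the required disjoint sets in $\mathcal{T}_\alpha^n \pi^{-1}(\mathcal{B}_\alpha)$, completing (ne4). The identification of $f_\alpha$ then follows by integrating $(1+xy)^{-2}$ over the fibre of $\pi$ at a given $x$: on $[\alpha, \tfrac{\alpha}{1-\alpha}]$ the fibre is $[0,\infty)$ yielding $\tfrac{1}{x}$, on $[\tfrac{\alpha}{1-\alpha}, 1-\alpha]$ the fibre is $[0,1]$ yielding $\tfrac{1}{1+x}$, and on $[1-\alpha,1]$ the fibre is $[-1,1]$ yielding $\tfrac{2}{1-x^2}$, matching the first case of the formula for $f_\alpha$.
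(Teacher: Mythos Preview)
Your proposal is correct and follows essentially the same route as the paper's own proof: partition $\mathcal D_\alpha$ into the $\tilde\Delta(\epsilon,d)$ pieces and use the matching identity of Proposition~\ref{p:matching1} to verify (ne1), note that (ne2) and (ne3) are immediate, reduce invariance of $\nu_\alpha$ to rectangles and cite \cite[Theorem~1]{Nak81}, and for (ne4) argue separation of points via the expanding $y$-coordinate of $\mathcal T_\alpha^{-1}$. The only addition is your fibrewise integration to recover $f_\alpha$, which the paper records immediately after the theorem; note, however, that your stated Jacobian $\tfrac{1}{x^2y^2}$ for the inverse is not quite right (it is $\tfrac{u^2}{y^2}$ with $u=\tfrac{1}{d+\epsilon x}$), though this does not affect the argument since you defer the actual computation to \cite{Nak81}.
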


The measure $\mu_\alpha = \nu_\alpha \circ \pi^{-1}$ is the unique invariant measure for $T_\alpha$ that is absolutely continuous with respect to $\lambda$ from Proposition~\ref{t:acim}. Projecting on the first coordinate gives the following explicit expression for the density $f_\alpha$ of $\mu_\alpha$:
\begin{equation}\label{q:density} \begin{split}
f_\alpha (x) = \ & \frac{1}{x} \mathbf{1}_{[\alpha,\frac{\alpha}{1-\alpha}]} (x) + \frac{1}{1+x} \mathbf{1}_{[\frac{\alpha}{1-\alpha},1]} (x)+ \frac{1}{1-x} \mathbf{1}_{[1-\alpha,1]}(x)\\
=\ & \frac{1}{x}  \mathbf{1}_{[\alpha,\frac{\alpha}{1-\alpha}]} (x)+ \frac{1}{1+x} \mathbf{1}_{[\frac{\alpha}{1-\alpha},1-\alpha]}(x) + \frac{2}{1-x^2} \mathbf{1}_{[1-\alpha,1]}(x).
\end{split}
\end{equation}
Here, by ``unique'', we of course mean unique up to scalar multiples. We choose to work with the above expression, because it comes from projecting the canonical measure \eqref{e:canme} for the natural extension, and is thus a natural choice.

\subsection{Natural extension $\mathbf{T_{\alpha}}$ for $ \mathbf{\alpha \geq \frac12}$}
As indicated by Theorem~\ref{t:acf} the situation for $\alpha \geq \frac12$ becomes increasingly complicated. Figure~\ref{f:nevar} shows the natural extension domain $\mathcal D_\alpha$ for $\alpha \in \big[ \frac12, \frac12 \sqrt 2 \big)$ with the action of $\mathcal T_\alpha$ and Table~\ref{t:densities} provides the corresponding densities. We do not provide further details as the proofs are exactly like the one for $0 < \alpha < \frac12$.

\begin{figure}[h]
\centering
\subfigure{
\begin{tikzpicture}[scale=3.5]
\draw[draw=black,fill=babyblueeyes] (.08,-.2)--(1,-.2)--(1,0)--(.85,0)--(.85,.1)--(1,.1)--(1,.3)--(.27,.3)--(.27,0)--(.08,0)--cycle;
\draw[draw=black,fill=aquamarine] (.08,-.2) -- (.15,-.2) -- (.15,0) -- (.08,0)--cycle;
\draw[draw=black,fill=blush] (.15,-.2) -- (.6,-.2) -- (.6,.3) -- (.27,.3)--(.27,0)--(.15,0)--cycle;

\draw [draw=black] (.08,-.2)--(1,-.2)--(1,0)--(.85,0)--(.85,.1)--(1,.1)--(1,.3)--(.27,.3)--(.27,0)--(.08,0)--cycle;
\draw [draw=black, dashed] (.15,-.2)--(.15,0)(.27,-.2)--(.27,0)(.6,-.2)--(.6,.3)(.85,-.2)--(.85,.3);
\draw[draw=black, dashed] (1,0)--(.08,0);

\node[below] at(.27,-.2){\tiny $\alpha$};
\node[below]at(.02,-.2){\tiny $1-\alpha$};
\node[below]at(.6,-.2){\tiny $\frac{1}{1+\alpha}$};
\node[below]at(.15,-.2){\tiny $\frac{1}{2}$};
\node[below]at(.85,-.2){\tiny $\frac{1-\alpha}{\alpha }$};

\node[right]at(1,0){\tiny $0$};
\node[right]at(1,-.2){\tiny $-1$};
\node[right]at(1,.1){\tiny $1$};
\node[right]at(1,.3){\tiny $\infty$};
\end{tikzpicture}}
\subfigure{
\begin{tikzpicture}[scale=3.5]
\draw[draw=black,fill=bananamania] (.08,-.2)--(.15,-.2)--(.15,-.05)--(.08,-.05)--cycle;
\draw[draw=black,fill=babyblueeyes] (.15,-.2)--(.4,-.2)--(.4,0)--(.35,0)--(.35,-.05)--(.15,-.05)--cycle;
\draw[draw=black,fill=brilliantlavender] (.4,-.2)--(.45,-.2)--(.45,0)--(.4,0)--cycle;
\draw[draw=black,fill=aquamarine] (.45,-.2)--(1,-.2)--(1,0)--(.45,0)--cycle;
\draw[draw=black,fill=aquamarine] (.8,.05) -- (.95,.05)--(.95,.15)--(1,.15)--(1,.4)--(.8,.4)--(.8,.2)--cycle;

\draw[draw=black] (.15,-.2)--(.15,-.05)(.45,-.2)--(.45,0)(.4,-.2)--(.4,0);
\draw[draw=black] (.08,-.2)--(1,-.2)--(1,0)--(.35,0)--(.35,-.05)--(.08,-.05)--(.08,-.2);
\draw[draw=black] (.8,.05) -- (.95,.05)--(.95,.15)--(1,.15)--(1,.4)--(.8,.4)--(.8,.2)--cycle;
\draw[draw=black, dashed] (.95,-.2)--(.95,.4);
\draw[draw=black, dashed] (.08,-.05)--(1,-.05);
\draw[draw=black, dashed] (.35,-.2)--(.35,-.05);

\node[below] at(.8,.075){\tiny $\alpha$};
\node[below]at(.01,-.18){\tiny $1-\alpha$};
\node[below]at(.45,-.21){\tiny $\frac{1}{1+\alpha}$};
\node[below]at(.95,-.2){\tiny $\frac{2\alpha -1}{1-\alpha}$};
\node[below]at(.32,-.18){\tiny $\frac{2\alpha-1}{\alpha }$};
\node[below]at(.15,-.2){\tiny $\frac{1}{2+\alpha}$};
\node[above]at(.4,0){\tiny $\frac{1}{2}$};

\node[right]at(1,0){\tiny $0$};
\node[right]at(1,-.2){\tiny $-1$};
\node[right]at(1,-.05){\tiny $g-1$};
\node[right]at(1,.15){\tiny $1$};
\node[right]at(1,.4){\tiny $\infty$};

\node[left]at(.8,.05){\tiny $g$};
\end{tikzpicture}}
\subfigure{
\begin{tikzpicture}[scale=3.5]
\draw[draw=black,fill=bananamania] (.08,-.2) -- (.15,-.2) -- (.15,-.05) -- (.08,-.05)--cycle;
\draw[draw=black,fill=blush] (.15,-.2) -- (.17,-.2) -- (.17,-.05) -- (.15,-.05)--cycle;
\draw[draw=black,fill=babyblueeyes] (.17,-.2) -- (.25,-.2) -- (.25,-.05) -- (.17,-.05)--cycle;
\draw[draw=black,fill=brilliantlavender] (.25,-.2) -- (.4,-.2) -- (.4,.0) -- (.35,.0)--(.35,-.05)--(.25,-.05)--cycle;
\draw[draw=black,fill=aquamarine] (.4,-.2) -- (1,-.2) -- (1,0) -- (.4,0)--cycle;
\draw[draw=black,fill=aquamarine] (.8,.05) -- (1,.05) -- (1,.1) -- (.95,.1)--(.95,.15)--(1,.15)--(1,.4)--(.8,.4)--(.8,.2)--cycle;

\draw[draw=black] (.08,-.2)--(1,-.2)--(1,0)--(.35,0)--(.35,-.05)--(.08,-.05)--(.08,-.2);
\draw[draw=black] (.8,.05) -- (1,.05) -- (1,.1) -- (.95,.1)--(.95,.15)--(1,.15)--(1,.4)--(.8,.4)--(.8,.2)--cycle;
\draw[draw=black] (.15,-.2)--(.15,-.05) (.17,-.2)--(.17,-.05) (.25,-.2)--(.25,-.05) (.4,-.2)--(.4,.0);
\draw[draw=black, dashed] (.95,-.2)--(.95,.4);
\draw[draw=black, dashed] (.08,-.05)--(1,-.05);
\draw[draw=black, dashed] (.35,-.2)--(.35,-.05);

\node[below] at(.8,.075){\tiny $\alpha$};
\node[below]at(.08,-.2){\tiny $1-\alpha$};
\node[below]at(.41,-.2){\tiny $\frac{1}{1+\alpha}$};
\node[above]at(.25,-.06){\tiny $\frac{1}{2}$};
\node[above]at(.15,-.06){\tiny $\frac{1}{3}$};
\node[below]at(.95,-.2){\tiny $\frac{1-\alpha}{2\alpha -1}$};
\node[above]at(.36,-.01){\tiny $\frac{2\alpha-1}{\alpha }$};

\node[right]at(1,0){\tiny $0$};
\node[right]at(1,-.2){\tiny $-1$};
\node[right]at(1,-.05){\tiny $g-1$};
\node[right]at(1,.05){\tiny $g$};
\node[right]at(1,.1){\tiny $1$};
\node[right]at(1,.15){\tiny $g+1$};
\node[right]at(1,.4){\tiny $\infty$};

\end{tikzpicture}}

\setcounter{subfigure}{0}
\subfigure[][$\alpha \in [\frac12,g)$]{
\begin{tikzpicture}[scale=3.5]
\draw[draw=black,fill=babyblueeyes] (.08,-.2)--(1,-.2)--(1,0)--(.85,0)--(.85,.1)--(1,.1)--(1,.3)--(.27,.3)--(.27,0)--(.08,0)--cycle;
\draw[draw=black,fill=aquamarine] (.45,-.1) -- (1,-.1) -- (1,-.05) -- (.45,-.05)--cycle;
\draw[draw=black,fill=blush] (.27,0) -- (.85,0) -- (.85,.1) -- (1,.1)--(1,.3)--(.27,.3)--cycle;

\draw[draw=black] (.08,-.2)--(1,-.2)--(1,0)--(.85,0)--(.85,.1)--(1,.1)--(1,.3)--(.27,.3)--(.27,0)--(.08,0)--cycle;
\draw[draw=black, dashed] (.45,-.2)--(.45,-.05)(.27,-.2)--(.27,0);
\draw[draw=black, dashed] (.08,-.1)--(.45,-.1)(.08,-.05)--(.45,-.05);

\node[below] at(.27,-.2){\tiny $\alpha$};
\node[below]at(0.08,-.2){\tiny $1-\alpha$};
\node[below]at(.45,-.2){\tiny $\frac{3\alpha-2}{\alpha -1}$};

\node[right]at(1,0){\tiny $0$};
\node[right]at(1,-.2){\tiny $-1$};
\node[right]at(1.03,-.1){\tiny $-\frac12$};
\node[right]at(1,-.05){\tiny $-\frac13$};
\node[right]at(1,.1){\tiny $1$};
\node[right]at(1,.3){\tiny $\infty$};
\end{tikzpicture}}
\subfigure[$\alpha \in [g, \frac23)$]{

\begin{tikzpicture}[scale=3.5]
\draw[draw=black,fill=bananamania] (.8,.05) -- (.95,.05)--(.95,.15)--(.8,.15)--cycle;
\draw[draw=black,fill=babyblueeyes] (.08,-.1)--(1,-.1)--(1,-.02)--(.5,-.02)--(.5,-.05)--(.08,-.05)--cycle;
\draw[draw=black,fill=brilliantlavender] (.8,.15)--(1,.15)--(1,.4)--(.8,.4)--cycle;
\draw[draw=black,fill=aquamarine] (.08,-.2)--(1,-.2)--(1,-.1)--(.08,-.1)--cycle;
\draw[draw=black,fill=aquamarine] (.35,-.05)--(.5,-.05)--(.5,-.02)--(1,-.02)--(1,0)--(.5,0)--(.35,0)--cycle;

\draw [draw=black] (.08,-.2)--(1,-.2)--(1,0)--(.35,0)--(.35,-.05)--(.08,-.05)--(.08,-.2);
\draw[draw=black] (.8,.05) -- (.95,.05)--(.95,.15)--(1,.15)--(1,.4)--(.8,.4)--(.8,.2)--cycle;
\draw[draw=black, dashed] (.95,-.2)--(.95,.4);
\draw[draw=black, dashed] (.08,-.05)--(1,-.05);
\draw[draw=black, dashed] (.5,-.2)--(.5,0);
\draw[draw=black, dashed] (.08,-.1)--(1,-.1)(.35,-.02)--(.5,-.02);
\draw[draw=black, dashed] (.35,-.2)--(.35,-.05);

\node[below]at(.08,-.2){\tiny $1-\alpha$};
\node[below]at(.5,-.2){\tiny $\frac{5\alpha-3}{2\alpha-1}$};
\node[below]at(.95,-.2){\tiny $\frac{2\alpha -1}{1-\alpha}$};
\node[below]at(.32,-.2){\tiny $\frac{2\alpha-1}{\alpha }$};

\node[right]at(1,0){\tiny $0$};
\node[right]at(1,-.2){\tiny $-1$};
\node[right]at(1,-.05){\tiny $g-1$};
\node[right]at(1,-.1){\tiny $-\frac12$};
\node[right]at(1,.15){\tiny $1$};
\node[right]at(1,.4){\tiny $\infty$};

\node[left]at(.35,-.01){\tiny $-\frac13$};
\end{tikzpicture}}
\subfigure[$\alpha \in [\frac23,\frac{1}{2}\sqrt{2})$]{

\begin{tikzpicture}[scale=3.5]
\draw[draw=black,fill=aquamarine] (.08,-.2)--(1,-.2)--(1,0)--(.35,0)--(.35,-.05)--(.08,-.05)--(.08,-.2)--cycle;
\draw[draw=black,fill=bananamania] (.45,-.02) -- (1,-.02) -- (1,-.01) -- (.45,-.01)--cycle;
\draw[draw=black,fill=brilliantlavender] (.8,.05) -- (1,.05) -- (1,.1) -- (.95,.1)--(.95,.15)--(1,.15)--(1,.4)--(.8,.4)--(.8,.2)--cycle;
\draw[draw=black,fill=blush] (.8,.05) -- (1,.05) -- (1,.1) --(.8,.1)--cycle;
\draw[draw=black,fill=babyblueeyes] (.08,-.1) -- (1,-.1) -- (1,-.05) -- (.08,-.05)--cycle;

\draw[draw=black] (.08,-.2)--(1,-.2)--(1,0)--(.35,0)--(.35,-.05)--(.08,-.05)--(.08,-.2);
\draw[draw=black] (.8,.05) -- (1,.05) -- (1,.1) -- (.95,.1)--(.95,.15)--(1,.15)--(1,.4)--(.8,.4)--(.8,.2)--cycle;
\draw[draw=black, dashed] (.95,-.2)--(.95,.4);
\draw[draw=black, dashed] (.08,-.05)--(1,-.05);
\draw[draw=black, dashed] (.45,-.2)--(.45,0);
\draw[draw=black, dashed] (.08,-.1)--(1,-.1);
\draw[draw=black, dashed] (.35,-.2)--(.35,-.05);
\draw[draw=black, dashed] (.35,-.02)--(.45,-.02);
\draw[draw=black, dashed] (.35,-.01)--(.45,-.01);

\node[below] at(.8,.07){\tiny $\alpha$};
\node[below]at(.08,-.2){\tiny $1-\alpha$};
\node[below]at(.47,-.2){\tiny $\frac{3-4\alpha}{1-\alpha}$};
\node[below]at(.95,-.2){\tiny $\frac{1-\alpha}{2\alpha -1}$};
\node[below]at(.3,-.2){\tiny $\frac{2\alpha-1}{\alpha }$};

\node[right]at(1,-.2){\tiny $-1$};
\node[right]at(1,.05){\tiny $g$};
\node[right]at(1,-.1){\tiny $-\frac12$};
\node[right]at(1,.1){\tiny $1$};
\node[right]at(1,.15){\tiny $g+1$};
\node[right]at(1,.4){\tiny $\infty$};

\node[right]at(1.02,-.03){\tiny $-\frac13$};
\node[left]at(.33,0){\tiny $-\frac{1}{g+3}$};

\end{tikzpicture}}
\caption{The maps $\mathcal T_\alpha$ for various values of $\alpha$. Areas on the top are mapped to areas on the bottom with the same color.}\label{f:nevar}
\end{figure}
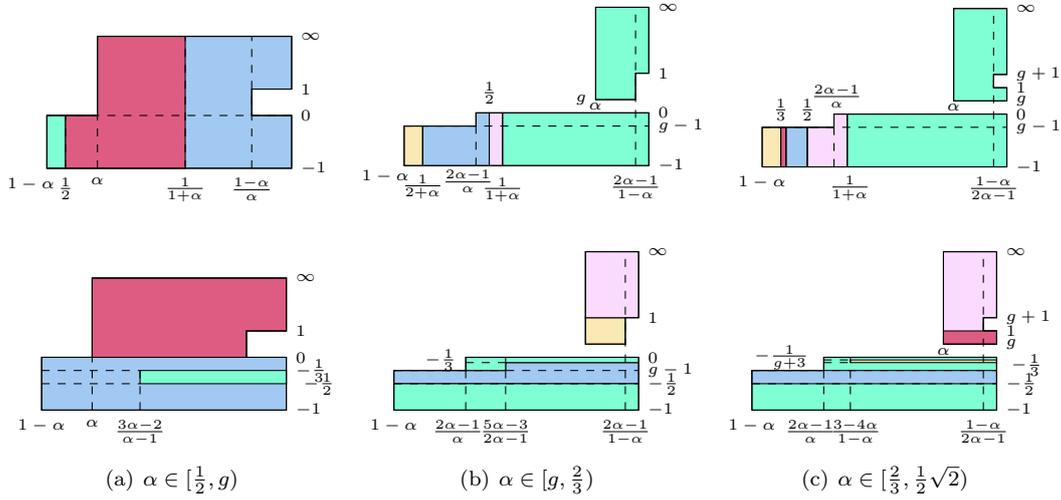

\bgroup
\def\arraystretch{1.75}
\begin{table}[h!]
  \begin{center}
    \begin{tabular}{l|l}
      \textbf{$\alpha$} & \textbf{Density $f_\alpha$}\\
      \hline
      $[\frac12, g)$ & $\frac{1}{1-x} \mathbf 1_{[1-\alpha, \alpha]}(x) + \frac1{x(1-x)} \mathbf 1_{[\alpha, \frac{1-\alpha}{\alpha}  ]} (x)+ \frac{x^2+1}{x(1-x^2)} \mathbf 1_{[\frac{1-\alpha}{\alpha},1]} (x)$ \\
      $[g, \frac23)$ & $\big(\frac{1}{1-x} + \frac{1}{x+\frac{1}{g-1}}\big)\mathbf 1_{[1-\alpha, \frac{2\alpha-1}{\alpha}]} (x) + \frac1{1-x} \mathbf 1_{[\frac{2\alpha-1}{\alpha}, \alpha ]} (x)+$  \\
   &   $+ \big(\frac{1}{1-x} + \frac{1}{x} -\frac{1}{x+\frac{1}{g}}\big) \mathbf 1_{[\alpha,\frac{2\alpha-1}{1-\alpha}]} (x)+  \frac{x^2+1}{x(1-x^2)} \mathbf 1_{[\frac{2\alpha-1}{1-\alpha},1]} (x) $ \\
      $[\frac23, \frac12 \sqrt{2})$ & $\big(\frac{1}{1-x} + \frac{1}{x+\frac{1}{g-1}}\big)\mathbf 1_{[1-\alpha, \frac{2\alpha-1}{\alpha}]} (x) + \frac1{1-x} \mathbf 1_{[\frac{2\alpha-1}{\alpha}, \alpha ]} (x) + \big(\frac{1}{1-x} + \frac{1}{x} -\frac{1}{x+\frac{1}{g}}\big) \mathbf 1_{[\alpha,\frac{1-\alpha}{2\alpha-1}]} (x)+$\\
    &  $ +\big(\frac{1}{1-x}+\frac{1}{x+1} -\frac{1}{x+ \frac1g} + \frac{1}{x} - \frac{1}{x+\frac{1}{g+1}}\big) \mathbf 1_{[\frac{1-\alpha}{2\alpha-1},1]} (x)$ \\
    \end{tabular}
    \vskip .2cm
     \caption{Invariant densities for $\alpha \in \big[\frac12, \frac12 \sqrt 2 \big)$.}
    \label{t:densities}
  \end{center}
\end{table}
\egroup

As $\alpha$ increases even further, the domain $\mathcal{D}_\alpha$ starts to exhibit a fractal structure. Figure~\ref{f:nesimulation} shows numerical simulations for various values of $\alpha > \frac 12\sqrt 2$.

\begin{figure}[h]
\centering
\begin{subfigure}[\text{$\alpha\approx 0.73694949$}]{
\includegraphics[scale=0.08]{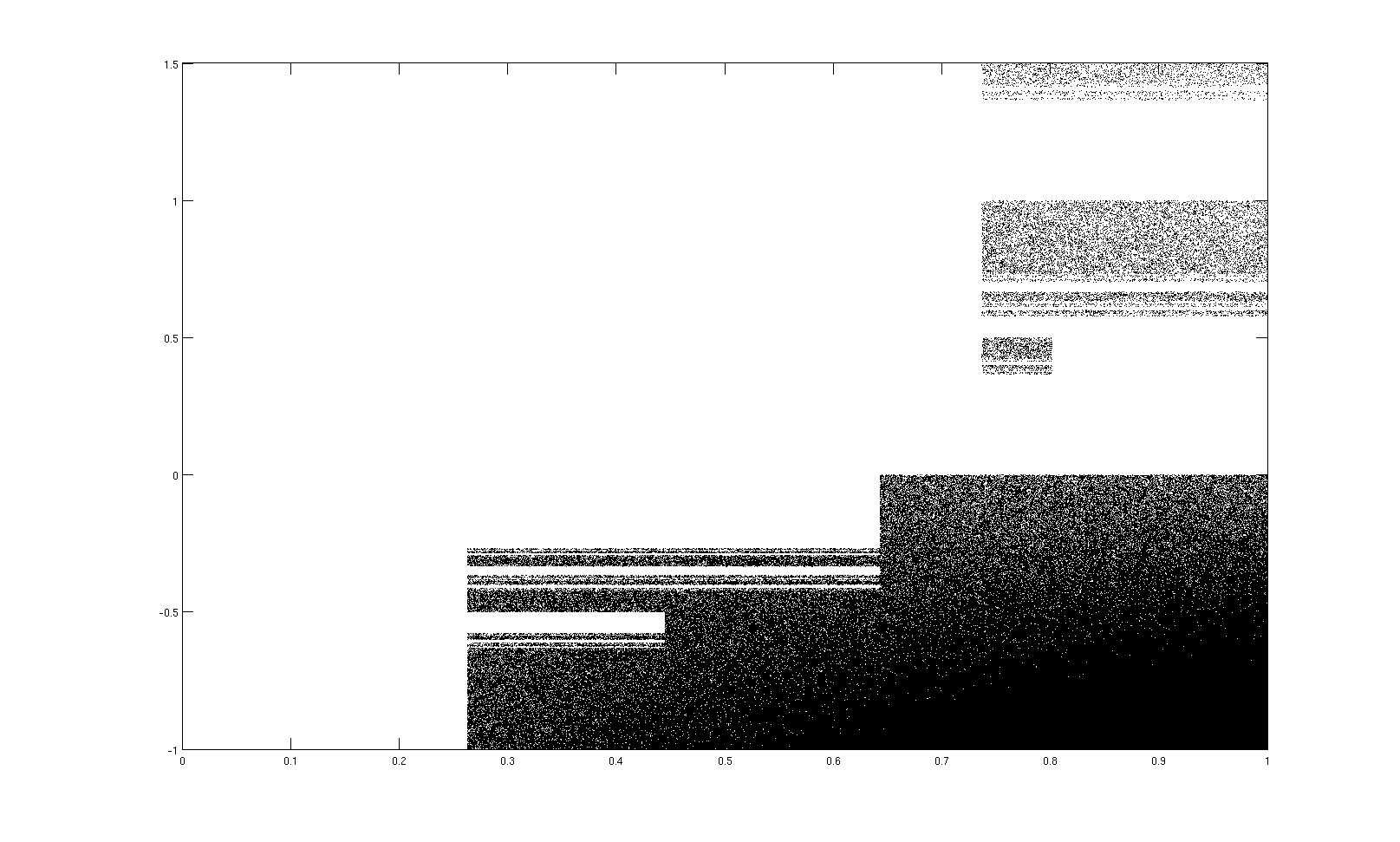}}
\end{subfigure}
\begin{subfigure}[\text{$\alpha\approx 0.79347519$}]{
\includegraphics[scale=0.08]{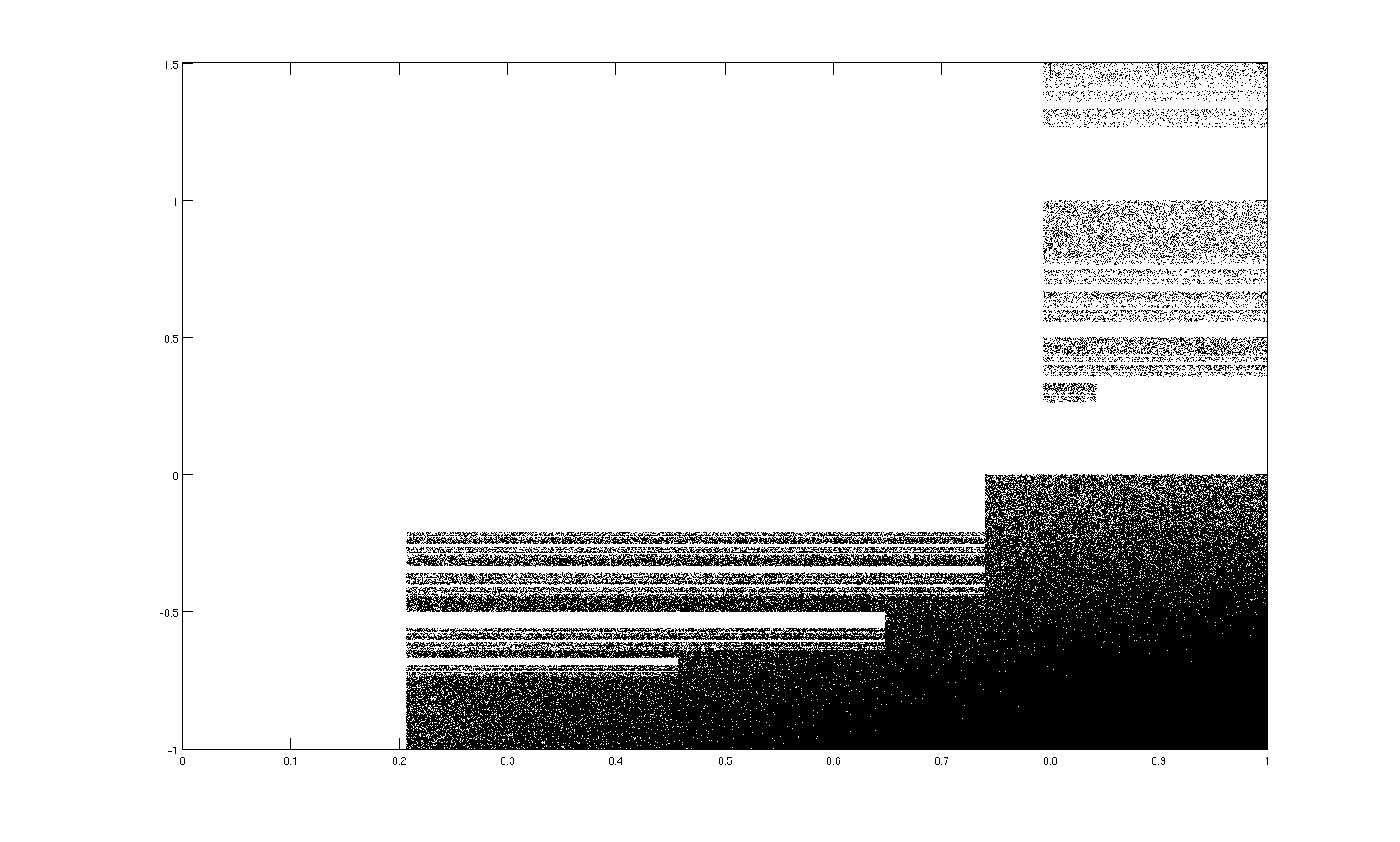}}
\end{subfigure}
\begin{subfigure}[\text{$\alpha \approx 0.85019348$}]{
\includegraphics[scale=0.08]{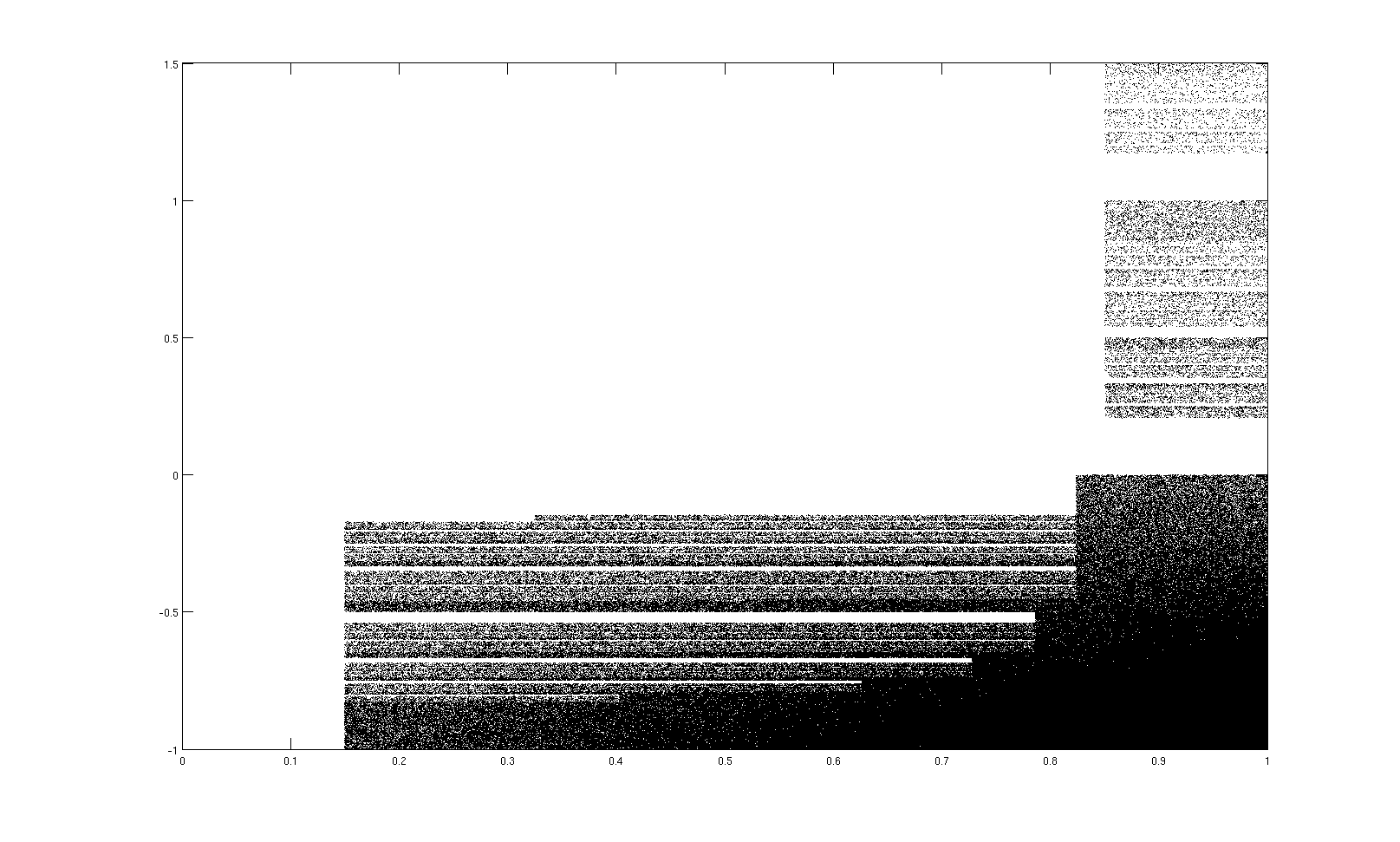}}
\end{subfigure}
\begin{subfigure}[\text{$\alpha \approx 0.89348572$}]{
\centering
\includegraphics[scale=0.08]{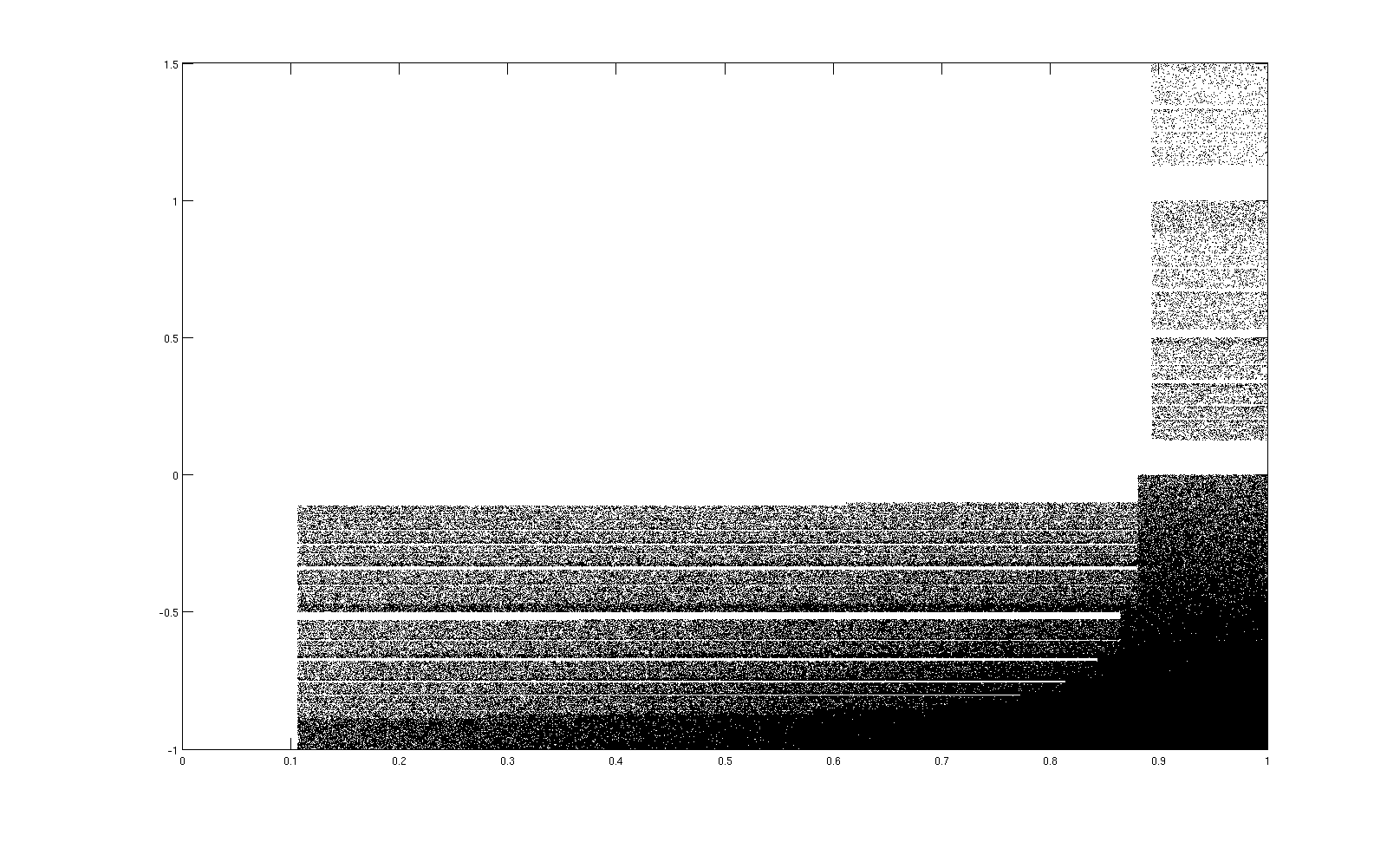}}
\end{subfigure}
\begin{subfigure}[\text{$\alpha \approx 0.92087668$}]{
\centering
\includegraphics[scale=0.08]{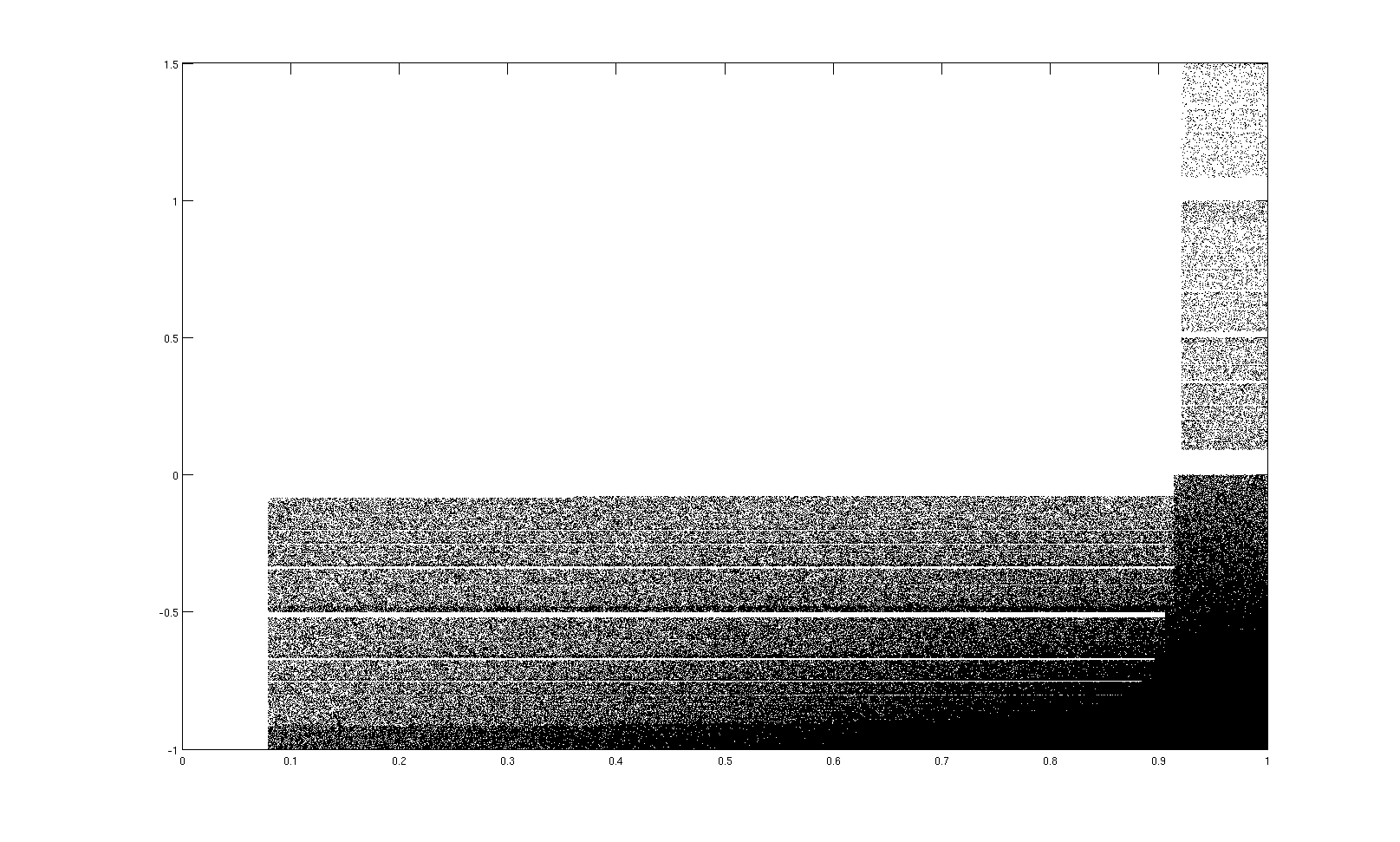}}
\end{subfigure}
\begin{subfigure}[\text{$\alpha \approx 0.95234649$}]{
\centering
\includegraphics[scale=0.08]{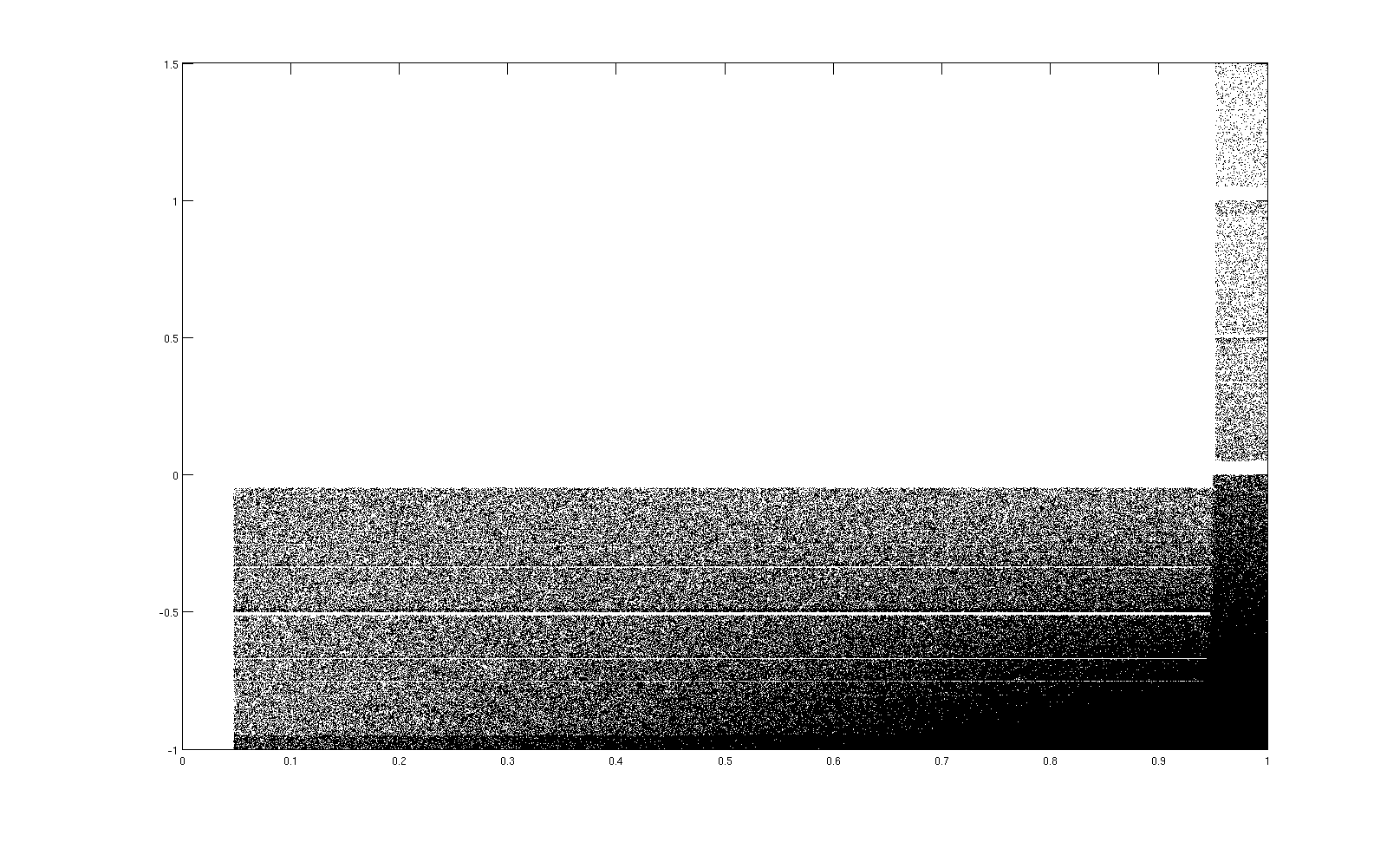}}
\end{subfigure}
\caption{Numerical simulations of $\mathcal{D}_\alpha$ for $\alpha > \frac{1}{2}\sqrt{2}$.}
\label{f:nesimulation}
\end{figure}

\section{Entropy, wandering rate and isomorphisms}\label{s:iso}
With an explicit expression for the density of $\mu_\alpha$ at hand, we can compute several dynamical quantities associated to the systems $T_\alpha$. In this section we compute the Krengel entropy, return sequence and wandering rate of $T_\alpha$ for a large part of the parameter space $(0,1)$.

\vskip .2cm
In \cite{Kre67} Krengel extended the notion of metric entropy to infinite, measure preserving and conservative systems $(X, \mathcal{B}, \mu, T)$ by considering the metric entropy on finite measure induced systems. More precisely, if $A$ is a sweep-out set for $T$ with $\mu(A) < \infty$, $T_{ A}$ the induced transformation of $T$ on $A$ and $\mu_A$ the restriction of $\mu$ to $A$, then the {\em Krengel entropy} of $T$ is defined to be
\begin{equation}\nonumber
h_{\text{Kr}, \mu}(T)=h_{\mu_A}(T_{A}, \mu_A),
\end{equation}
where $h_{\mu_A}(T_{A}, \mu_A)$ is the metric entropy of the system $(A, \mathcal B \cap A, T_A, \mu_A)$. Krengel proved in \cite{Kre67} that this quantity is independent of the choice of $A$. In \cite[Theorem 6]{Zwe00} it is shown that if $T$ is an AFN-map the Krengel entropy can be computed using Rohlin's formula:
\begin{equation}\label{e:ro}
h_{\text{Kr}, \mu}(T)= \int_X \log (|T'|) d \mu.
\end{equation}

The following theorem follows from Lemma~\ref{l:afn}, \eqref{q:density} and Table~\ref{t:densities}.
\begin{thm}\label{t:entropy}
For any $\alpha \in \big(0, g \big]$ the system $(I_{\alpha}, \mathcal{B}_{\alpha}, \mu_{\alpha}, T_{\alpha})$ has $h_{\text{Kr}, \mu_\alpha}(T_\alpha) = \frac{\pi^2}{6}$.
\end{thm}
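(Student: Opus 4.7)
The plan is to apply Rohlin's formula~\eqref{e:ro}, which is available for the maps $T_\alpha$ by Lemma~\ref{l:afn} together with \cite[Theorem 6]{Zwe00}. Since $|T_\alpha'(x)| = 1/x^2$ on every branch,
\[
h_{\text{Kr},\mu_\alpha}(T_\alpha) \;=\; \int_{I_\alpha} \log|T_\alpha'(x)|\, d\mu_\alpha(x) \;=\; -2\int_{I_\alpha} (\log x)\, f_\alpha(x)\, dx,
\]
so the task reduces to showing that this last integral equals $\pi^2/6$ for every $\alpha \in (0,g]$.

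For $\alpha \in (0,\tfrac12)$ I would substitute the density from~\eqref{q:density} and split over $[\alpha,\tfrac{\alpha}{1-\alpha}]$, $[\tfrac{\alpha}{1-\alpha},1-\alpha]$ and $[1-\alpha,1]$. The contribution from the first interval is elementary via $\int \tfrac{\log x}{x}\,dx = \tfrac12(\log x)^2$. For the other two I use the partial fraction $\tfrac{2}{1-x^2} = \tfrac{1}{1-x} + \tfrac{1}{1+x}$ together with the antiderivatives
\[
\int \frac{\log x}{1+x}\,dx = \log x \log(1+x) + \Li_2(-x),\qquad \int \frac{\log x}{1-x}\,dx = -\log x \log(1-x) - \Li_2(x),
\]
both up to a constant. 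Evaluating at the endpoints leaves a linear combination of terms $\log\alpha \log(1-\alpha)$ and dilogarithm values at $\alpha$, $1-\alpha$, $-\alpha/(1-\alpha)$, $-(1-\alpha)$, which I would collapse to $\pi^2/6$ using Euler's reflection $\Li_2(z) + \Li_2(1-z) = \tfrac{\pi^2}{6} - \log z \log(1-z)$ and Landen's identity $\Li_2\!\left(\tfrac{-z}{1-z}\right) = -\Li_2(z) - \tfrac12 \log^2(1-z)$. For $\alpha \in [\tfrac12,g]$ I would run the same argument with the second row of Table~\ref{t:densities}, reducing via the partial fractions $\tfrac{1}{x(1-x)} = \tfrac{1}{x} + \tfrac{1}{1-x}$ and $\tfrac{x^2+1}{x(1-x^2)} = \tfrac{1}{x}+\tfrac{1}{1-x}-\tfrac{1}{1+x}$ to the same elementary antiderivatives, now with endpoints in $\{1-\alpha, \alpha, \tfrac{1-\alpha}{\alpha}\}$, and invoking the same two dilogarithm identities.

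The main obstacle is not conceptual but organizational: a long list of dilogarithm terms with $\alpha$-dependent arguments appears, and they must cancel exactly so that only $\pi^2/6$ survives. The fact that the final value is forced to be independent of $\alpha$ strongly constrains how the cancellations must line up. As useful sanity checks, the limit $\alpha \to 0^+$ collapses the computation to the classical Gauss-map identity $-2\int_0^1 \tfrac{\log x}{1+x}\,dx = \pi^2/6$, while at $\alpha = \tfrac12$ the two formulas from Theorem~\ref{t:main1} must yield the same value.
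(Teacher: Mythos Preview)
Your proposal is correct and follows essentially the same route as the paper: apply Rohlin's formula via Lemma~\ref{l:afn} and \cite[Theorem 6]{Zwe00}, split the integral according to the pieces of the density in~\eqref{q:density} (respectively Table~\ref{t:densities} for $\alpha\in[\tfrac12,g]$), express the antiderivatives in terms of $\Li_2$, and collapse the $\alpha$-dependent terms using standard dilogarithm identities. The only cosmetic difference is that the paper simplifies using $\Li_2(-1)=-\tfrac{\pi^2}{12}$ together with Landen's identity, whereas you invoke Euler's reflection in addition; these toolkits are equivalent and lead to the same cancellation.
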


\begin{proof}First fix $\alpha \in \big(0,\frac12\big)$. By Lemma~\ref{l:afn} we can use formula \eqref{e:ro} to compute the Krengel entropy of $T_\alpha$. For this computation we use some properties of the dilogarithm function, which is defined by
$$\Li_2(x):= \sum_{n=1}^{\infty} \frac{x^n}{n^2} \quad \text{ for } |x| \leq 1,$$
and satisfies (see \cite{Lew} for more information)
\begin{itemize}
\item $\Li_2(0)=0$;
\item $\Li_2(-1)= -\pi^2/12$;
\item $\Li_2(x)+\Li_2(-\frac{x}{1-x})= -\frac{1}{2}\log^2(1-x)$.
\end{itemize}
Using the density from \eqref{q:density} and these three properties of $\Li_2$ we get
\begin{align*}
\int_{I_\alpha} \log (|T'_{\alpha}|) d \mu_{\alpha} &= -2 \bigg(\int_{\alpha}^ {\frac{\alpha}{1-\alpha}} \frac{\log x}{x} dx + \int_{\frac{\alpha}{1-\alpha}}^{1-\alpha} \frac{\log x}{1+x} dx  +2 \int_{1-\alpha}^{1} \frac{\log x}{1-x^2} dx \bigg) \\
&= [-\log^2 x]_{\alpha}^{\frac{\alpha}{1-\alpha}} -2  [\Li_2(-x)+ \log x \log (x+1)]_{\frac{\alpha}{1-\alpha}}^{1-\alpha} \\
& \quad -2  [\Li_2(1-x)+\Li_2(-x)+ \log x \log (x+1)]_{1-\alpha}^{1} \\
&= -\log^2 \Big(\frac{\alpha}{1-\alpha}\Big)+\log^2 (\alpha) +2 \Li_2 \Big(\frac{-\alpha}{1-\alpha} \Big) + 2 \log \Big(\frac{\alpha}{1-\alpha}\Big) \log \Big(\frac{1}{1-\alpha}\Big)  \\
& \quad -2 \Li_2 (-1) + 2 \Li_2(\alpha)\\
&=\log^2 (\alpha) -\log^2 \big(\frac{\alpha}{1-\alpha}\big) -\log^2 (1-\alpha) - 2 \log \big(\frac{\alpha}{1-\alpha}\big) \log (1-\alpha)+\frac{\pi^2}{6} \\
&= \log^2 (\alpha) -\log^2 (\alpha) + 2\log (\alpha) \log (1-\alpha)  -2 \log^2 (1-\alpha) + \\
& \quad - 2 \log (\alpha)\log(1-\alpha) + 2\log^2(1-\alpha)+\frac{\pi^2}{6} \\
&= \frac{\pi^2}{6}.
\end{align*}
A similar computation yields $h_{\text{Kr}, \mu_\alpha}(T_\alpha) = \frac{\pi^2}{6}$ for $\alpha \in [\frac12, g]$.
\end{proof}

\begin{nrem}\label{r:krengel}{\rm
Numerical evidence using the densities from Table~\ref{t:densities} suggests that $h_{\text{Kr}, \mu_\alpha}(T_\alpha) = \frac{\pi^2}{6}$ for $\alpha \in (g, \frac12 \sqrt{2})$ as well. Even though we were not able to calculate the Krengel entropy for $\alpha \in (g, \frac12 \sqrt{2})$ explicitly, we conjecture that in fact $h_{\text{Kr}, \mu_\alpha}(T_\alpha) = \frac{\pi^2}{6}$ for all $\alpha \in (0, 1)$. This claim is supported by the fact that the Krengel entropy for Nakada's $\alpha$-continued fraction maps $S_\alpha$ from \eqref{q:nakada} is $\frac{\pi^2}{6}$ as well, see~\cite[Theorem 2]{KSS12}.}
\end{nrem}

\vskip .2cm
The {\em return sequence} of $T_{\alpha}$ is the sequence $(a_n(T_{\alpha}))_{n \ge 1}$ of positive real numbers satisfying~\eqref{e:pde}. The pointwise dual ergodicity of each map $T_{\alpha}$ implies that such a sequence, which is unique up to asymptotic equivalence, exists. The {\em asymptotic type} of $T_\alpha$ corresponds to the family of all sequences asymptotically equivalent to some positive multiple of $(a_n(T_{\alpha}))_{n \ge 1}$. The return sequence of a system is related to its wandering rate, which quantifies how big the system is in relation to its subsets of finite measure. To be more precise, if $(X, \mathcal{B}, \mu,T)$ is a conservative, ergodic, measure preserving system and $A \in \mathcal{B}$ a set of finite positive measure, then the wandering rate of $A$ with respect to $T$ is the sequence $(w_n(A))_{n \geq 1}$ given by
\[ w_n(A):= \mu \bigg( \bigcup_{k=0}^{n-1} T^{-k}A \bigg).\]
It follows from \cite[Theorem 2]{Zwe00} that for each of the maps $T_\alpha$ there exists a positive sequence $(w_n(T_\alpha))$ such that $w_n(T_\alpha) \uparrow \infty$ and $w_n(T_\alpha) \sim w_n(A)$ as $n \to \infty$ for all sets $A$ that have positive, finite measure and are bounded away from 1. The asymptotic equivalence class of $(w_n(T_\alpha))$ defines the {\em wandering rate} of $T_\alpha$. Using the machinery from \cite{Zwe00} and the explicit formula of the density we compute both the return sequence and the wandering rate of the maps $T_\alpha$.

\begin{prop}\label{prop:wander}
For all $\alpha\in(0,1)$ there is a constant $c_{\alpha} >0$ such that
\[w_n(T_\alpha) \sim c_{\alpha} \log n \quad \textit{and} \quad a_n(T_\alpha) \sim \frac{n}{c_{\alpha} \log n}. \]
If $\alpha\in(0,\frac{1}{2} \sqrt{2})$, then $c_{\alpha}=1$.
\end{prop}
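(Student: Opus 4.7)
The plan is to reduce both asymptotics to the local behaviour of $T_\alpha$ and its invariant density near the unique indifferent fixed point $x=1$. Since by Lemma~\ref{l:afn} each $T_\alpha$ is an AFN-map with a single indifferent fixed point, the framework of \cite{Zwe00} expresses both $w_n$ and $a_n$ in terms of this local data. I would first compute the expansion of $T_\alpha$ near $1$: on the branch containing $1$ (the rightmost branch of $D_\alpha$) one has $T_\alpha(x)=2-1/x$, so setting $u=1-x$ gives $1-T_\alpha(1-u)=u+u^2+O(u^3)$, meaning that $1$ is a parabolic fixed point of order $p=2$. A standard iteration of the telescoping identity $\tfrac{1}{u_k}-\tfrac{1}{u_{k-1}}\to 1$ then shows that, along this branch, the $k$-th pre-image of a point bounded away from $1$ satisfies $1-x_{-k}\sim 1/k$ as $k\to\infty$.

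For $\alpha\in(0,\tfrac12\sqrt 2)$ I would then read off the asymptotic behaviour of $f_\alpha$ as $x\uparrow 1$ from \eqref{q:density} and Table~\ref{t:densities}. In each of the four sub-cases a short algebraic check gives $f_\alpha(x)=\tfrac{1}{1-x}+O(1)$: both $\tfrac{2}{1-x^2}$ and $\tfrac{x^2+1}{x(1-x^2)}$ have leading coefficient $1$ at $x=1$, and in the range $[\tfrac{2}{3},\tfrac12\sqrt{2})$ the four auxiliary summands $\tfrac{1}{x+1}$, $\tfrac{1}{x+1/g}$, $\tfrac{1}{x}$, $\tfrac{1}{x+1/(g+1)}$ are all continuous at $1$, so only the $\tfrac{1}{1-x}$ term is singular. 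Combining $f_\alpha(x)\sim 1/(1-x)$ with $1-x_{-k}\sim 1/k$, for any $A\in\mathcal{B}_\alpha$ of positive finite measure bounded away from $1$ the complement of $\bigcup_{k=0}^{n-1}T_\alpha^{-k}A$ in $I_\alpha$ lies, up to a set of bounded measure, inside $(1-c/n,\,1)$, so that
\[
w_n(A)\sim\int_{\min\{\alpha,1-\alpha\}}^{1-c/n}\frac{dx}{1-x}\sim\log n.
\]
By \cite[Theorem 2]{Zwe00} this gives $w_n(T_\alpha)\sim\log n$, i.e.\ $c_\alpha=1$.

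For general $\alpha\in(0,1)$ no explicit density is available, but Thaler-type asymptotics for AFN-maps (as imported into \cite{Zwe00}) force $f_\alpha(x)\sim c_\alpha/(1-x)$ for some $c_\alpha>0$ as $x\uparrow 1$: the density is a fixed point of the transfer operator $\mathbf P_\alpha$, and the order-$2$ quadratic tangency at $1$ uniquely dictates this blow-up rate (up to the scalar $c_\alpha$). The same integral estimate as above then yields $w_n(T_\alpha)\sim c_\alpha\log n$.

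Finally, to pass from the wandering rate to the return sequence I would invoke the Darling--Kac theorem in its AFN-map incarnation \cite[Theorem 2]{Zwe00}: for a pointwise dual ergodic interval map with slowly varying wandering rate and a single order-$2$ indifferent fixed point one has $a_n\sim n/w_n$. Combined with the previous step this gives $a_n(T_\alpha)\sim n/(c_\alpha\log n)$. The main non-routine step is identifying the leading coefficient of $f_\alpha$ at $1$: for $\alpha<\tfrac12\sqrt 2$ this is a direct algebraic inspection of Table~\ref{t:densities}, whereas for $\alpha\ge\tfrac12\sqrt 2$ only the existence (and not the value) of $c_\alpha$ can be extracted from the abstract asymptotic analysis, which is why the explicit value $c_\alpha=1$ is claimed only in the first range.
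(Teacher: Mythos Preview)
Your proposal is correct and follows essentially the same approach as the paper: both identify the order-$2$ tangency of $T_\alpha$ at $x=1$, extract the leading singularity $f_\alpha(x)\sim c_\alpha/(1-x)$ of the invariant density (explicitly for $\alpha<\tfrac12\sqrt2$, abstractly otherwise), and then feed this into Zweim\"uller's AFN-map machinery. The paper is terser---it cites \cite[Theorems~3 and~4]{Zwe00} directly rather than spelling out the pre-image asymptotics and the integral estimate as you do, and it records the density near $1$ in the form $f_\alpha(x)=\tfrac{x-2}{x-1}H(x)$ with $c_\alpha=\lim_{x\uparrow1}H(x)$---but the underlying argument is the same.
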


\begin{proof}
Using the Taylor expansion of the maps $T_\alpha$, one sees that for $x \rightarrow 1$ we have $T_\alpha(x)= x-(x-1)^2+ o ((x-1)^2)$. Hence, $T_\alpha$ admits what are called {\em nice expansions} in \cite{Zwe00}. For $x \in \big(\frac1{1+\alpha}, 1 \big]$ we can write $f_\alpha(x) = \frac{x-2}{x-1} H(x)$, where the function $x \mapsto \frac{x-2}{x-1}$ corresponds to the map called $G$ in \cite[Theorem A]{Zwe00}. It then follows by \cite[Theorems 3 and 4]{Zwe00} that the wandering rate is
\begin{equation}\label{e:wrate}
 w_n(T_\alpha) \sim c_{\alpha} \log n
\end{equation}
and the return sequence is
\begin{equation}\label{e:rseq}
a_n(T_\alpha) \sim \frac{n}{c_{\alpha }\log n},
\end{equation}
for $c_{\alpha}= \lim_{x \uparrow 1} H(x)$. For $\alpha \in \big(0, \frac12 \sqrt{2}\big)$ the explicit formula for the densities from \eqref{q:density} and Table~\ref{t:densities} gives $c_{\alpha}=1$.
\end{proof}
We have now established all parts of Theorem~\ref{t:main1}.

\begin{proof}[Proof of Theorem~\ref{t:main1}]
The densities are given by~(\ref{q:density}) and listed in Table~\ref{t:densities}. The entropy is given by Theorem~\ref{t:entropy} and the wandering rate and return sequence in Proposition~\ref{prop:wander}.
\end{proof}

\begin{nrem}{\rm
(i) As in Remark~\ref{r:krengel} we suspect that in fact $c_\alpha=1$ for all $\alpha \in (0,1)$.

\vskip .1cm
\noindent (ii) Since all the results from \cite{Zwe00} apply to our family, we can use these to get an even more detailed description of the ergodic behaviour of the maps $T_\alpha$. We briefly mention a few more results for $\alpha \in (0, 1/2 \sqrt{2}]$. Since the return sequence $(a_n(T_\alpha))_{n \ge 1}$ is regularly varying with index $1$, by \cite[Theorem 5]{Zwe00} and \cite[Corollary 3.7.3]{Aar97}, we have
\begin{equation}\label{q:wll}
 \frac{\log n}{n} \sum_{k=0}^{n-1} f \circ T_\alpha^k \xrightarrow{\mu_{\alpha}} \int_{I_\alpha} f \, d\mu_\alpha, \quad \text{ for } f \in L^1(I_\alpha, \mathcal B_{\alpha}, \mu_\alpha) \text{ and } \int_{I_\alpha} f \, d\mu_\alpha \neq 0.
 \end{equation}
In other words,  a weak law of large numbers holds for $T_{\alpha}$.

\vskip .2cm
In addition, we can obtain asymptotics for the excursion times to the interval $\big[\frac{1}{1+\alpha},1\big]$, corresponding to the rightmost branch of $T_{\alpha}$. Let $Y$ be a sweep-out set, $T_Y$ the induced map on $Y$ and $\varphi: x \mapsto \min \{n \geq 1: T^n(x) \in Y\}$ the first return map. Write $\varphi_n^Y:=\sum_{k=0}^{n-1} \varphi \circ T_Y^k$ and note that the asymptotic inverse of the sequence $(a_n(T_\alpha))_{n \ge 1}$ is $(n\log n)_{n \ge 1}$,  so that the statement from \eqref{q:wll} is equivalent to the following dual:
$$\frac{1}{n\log n} \varphi_n^Y \xrightarrow{\mu_\alpha} \frac{1}{\mu_\alpha(Y)}.$$
If we induce on $Y:=\big [\min \{\alpha, 1-\alpha\}, \frac{1}{1+\alpha}\big]$, then $\varphi_n^Y$ sums the lengths (increased by $n$) of the first $n$ blocks of consecutive digits $(\epsilon,d)=(-1,2)$, and we obtain
$$\frac{1}{n \log n} \varphi_n^Y - \frac{1}{\log n}\xrightarrow{\mu_{\alpha}} \frac{1}{\mu_{\alpha}(Y)}.$$
From Theorem~\ref{t:main1}, it follows that for $\alpha < \frac12$, $\mu_{\alpha}(Y)=\log(2+\alpha)$. Note that for $\alpha$ decreasing the right hand side is increasing, meaning we spend on average more time in $\Delta(-1,2)$. Intuitively, for a smaller $\alpha$, every time we enter $\Delta(-1,2)$ we are closer to the indifferent fixed point, and it takes longer before we manage to escape from it.
}\end{nrem}

Note that the Krengel entropy, return sequence and wandering rate we found do not display any dependence on $\alpha$. These quantities give isomorphism invariants for dynamical systems with infinite invariant measures. Two measure preserving dynamical systems $(X,\mathcal B, \mu, T)$ and $(Y, \mathcal C, \nu, S)$ on $\sigma$-finite measure spaces are called {\em $c$-isomorphic} for $c \in (0, \infty]$ if there are sets $N \in \mathcal B$, $M \in \mathcal C$ with $\mu(N)=0=\nu(M)$ and $T(X \setminus N) \subseteq X \setminus N$ and $S(Y \setminus M) \subseteq Y \setminus M$ and if there is a map $\phi: X \setminus N \to Y \setminus M$ that is invertible, bi-measurable and satisfies $\phi \circ T = S \circ \phi$ and $\mu \circ \phi^{-1} = c \cdot \nu$. Invariants for $c$-isomorphisms are the asymptotic proportionality classes of the return sequence (see \cite[Propositions 3.7.1 and 3.3.2]{Aar97} and~\cite[Remark 8]{Zwe00}) and the normalised wandering rates, which combine the Krengel entropy with the wandering rates (see e.g.~\cite{Tha83, Zwe00}). It follows from Theorem~\ref{t:main1} that all these quantities are equal for all $T_\alpha$, $\alpha \in (0, \frac12 \sqrt 2)$. Using the idea from \cite{Kal}, however, we find many pairs $\alpha$ and $\alpha'$ such that $T_{\alpha}$ and $T_{\alpha'}$ are not $c$-isomorphic for any $c \in (0, \infty]$. Consider for example any $\alpha \in \big( \sqrt 2-1, \frac12 \big)$, so that $\alpha \in \big( \frac1{2+\alpha}, \frac12 \big)$, and any $\alpha' \in \big( \frac13, \frac{3-\sqrt 5}{2} \big)$, so that $T_{\alpha'}(\alpha') > 1-\alpha'$, see Figure~\ref{f:cisom}. For a contradiction, suppose that there is a $c$-isomorphism $\phi: I_\alpha \to I_{\alpha'} $ for some $c \in (0, \infty]$. Let $J=[\alpha,  1-\alpha]$ and note that any $x \in J$ has precisely one pre-image under $T_{\alpha}$. Since $\phi \circ T_\alpha = T_{\alpha'} \circ \phi$ and $\phi$ is invertible, any element of the set $\phi(J)$ must also have precisely one pre-image. Since $T_{\alpha'}(\alpha') > 1-\alpha'$, there are no such points, so $\mu_{\alpha'} (\phi(J))=0$. On the other hand, since $J$ is bounded away from 1, it follows that $0 < \mu_\alpha(J) < \infty$. Hence, there can be no $c$, such that $\mu_{\alpha'} \circ \phi^{-1} = c \cdot \mu_\alpha$. Obviously a similar argument holds for many other combinations of $\alpha$ and $\alpha'$, even for $\alpha > \frac12$, and in case the argument does not work for $T_\alpha$ and $T_{\alpha'}$, one can also consider iterates $T_\alpha^n$, $T_{\alpha'}^n$. Hence, even though the above discussed isomorphism invariants are equal for all $\alpha \in \big( 0, \frac12 \big)$, it is not generally the case that any two maps $T_\alpha$ are $c$-isomorphic. We conjecture that for almost all pairs $(\alpha, \alpha')$, the maps $T_\alpha$ and $T_{\alpha'}$ are not $c$-isomorphic. 

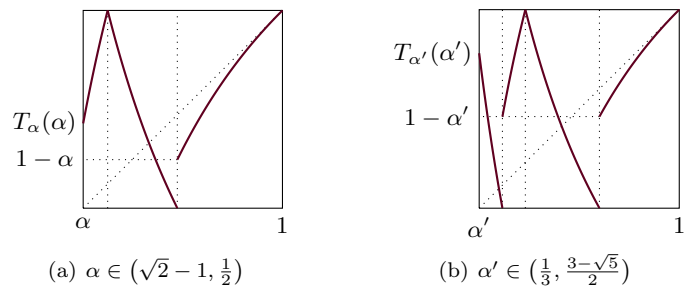
\begin{figure}[h]
\begin{center}
\subfigure[$\alpha \in \big( \sqrt 2-1, \frac12 \big)$]{
\begin{tikzpicture}[scale=4.6]
\draw(.43,.43)node[below]{\small $\alpha$}--(.6,.43)node[below]{\color{white}\small $\alpha'$}--(1,.43)node[below]{\small $1$}--(1,1)--(.43,1)--(.43,.43);
\draw[dotted] (.6993,.43)--(.6993,1)(.5,.43)--(.5,1)(.43,.43)--(1,1)(.6993,.57)--(.43,.57)node[left]{\small $1-\alpha$};
\draw[thick, purple!50!black, smooth, samples =20, domain=.6993:1] plot(\x,{2-1 / \x });
\draw[thick, purple!50!black, smooth, samples =20, domain=.5:.6993] plot(\x,{1 / \x -1});
\draw[thick, purple!50!black, smooth, samples =20, domain=.43:.5] plot(\x,{3-1 / \x });
\node at (.32,.6744) {\small $T_\alpha(\alpha)$};
\end{tikzpicture}}
\hspace{.8cm}
\subfigure[$\alpha' \in \big( \frac13, \frac{3-\sqrt 5}{2} \big)$]{
\begin{tikzpicture}[scale=4.05]
\draw(.35,.35)node[below]{\small $\alpha'$}--(1,.35)node[below]{\small $1$}--(1,1)--(.35,1)--(.35,.35);
\draw[dotted] (.7407,.35)--(.7407,1)(.5,.35)--(.5,1)(.35,.35)--(1,1)(.4255,.35)--(.4255,1)(.7407,.65)--(.35,.65)node[left]{\small $1-\alpha'$};
\draw[thick, purple!50!black, smooth, samples =20, domain=.7407:1] plot(\x,{2-1 / \x });
\draw[thick, purple!50!black, smooth, samples =20, domain=.5:.7407] plot(\x,{1 / \x -1});
\draw[thick, purple!50!black, smooth, samples =20, domain=.4255:.5] plot(\x,{3-1 / \x });
\draw[thick, purple!50!black, smooth, samples =20, domain=.35:.4255] plot(\x,{1 / \x -2});
\node at (.2,.857) {\small $T_{\alpha'}(\alpha')$};
\end{tikzpicture}}
\end{center}
\caption{Maps $T_\alpha$ and $T_{\alpha'}$ that are not $c$-isomorphic for any $c \in \mathbb (0, \infty]$.}
\label{f:cisom}
\end{figure}

\section{Acknowledgment}
The third author is supported by the NWO TOP-Grant No.~614.001.509.


\begin{thebibliography}{BSORG13}

\bibitem[Aar97]{Aar97}
J.~Aaronson.
\newblock {\em An introduction to infinite ergodic theory}, volume~50 of {\em
  Mathematical Surveys and Monographs}.
\newblock American Mathematical Society, Providence, RI, 1997.

\bibitem[AS13]{AS}
P.~Arnoux and T.~A. Schmidt.
\newblock Cross sections for geodesic flows and {$\alpha$}-continued fractions.
\newblock {\em Nonlinearity}, 26(3):711--726, 2013.

\bibitem[BCIT13]{BCIT13}
C.~Bonanno, C.~Carminati, S.~Isola, and G.~Tiozzo.
\newblock Dynamics of continued fractions and kneading sequences of unimodal
  maps.
\newblock {\em Discrete Contin. Dyn. Syst.}, 33(4):1313--1332, 2013.

\bibitem[BCK17]{BCK}
H.~Bruin, C.~Carminati, and C.~Kalle.
\newblock Matching for generalised {$\beta$}-transformations.
\newblock {\em Indag. Math. (N.S.)}, 28(1):55--73, 2017.

\bibitem[BCMP18]{BCMP}
H.~Bruin, C.~Carminati, S.~Marmi, and A.~Profeti.
\newblock Matching in a family of piecewise affine maps.
\newblock {\em Nonlinearity}, 32(1):172--208, 2018.

\bibitem[BSORG13]{BORG13}
V.~Botella-Soler, J.~A. Oteo, J.~Ros, and P.~Glendinning.
\newblock Lyapunov exponent and topological entropy plateaus in piecewise
  linear maps.
\newblock {\em J. Phys. A}, 46(12):125101, 26, 2013.

\bibitem[CIT18]{CIT18}
C.~Carminati, S.~Isola, and G.~Tiozzo.
\newblock Continued fractions with {$SL(2,Z)$}-branches: combinatorics and
  entropy.
\newblock {\em Trans. Amer. Math. Soc.}, 370(7):4927--4973, 2018.

\bibitem[CM18]{CM18}
D.~Cosper and M.~Misiurewicz.
\newblock Entropy locking.
\newblock {\em Fund. Math.}, 241(1):83--96, 2018.

\bibitem[CMPT10]{CMPT10}
C.~Carminati, S.~Marmi, A.~Profeti, and G.~Tiozzo.
\newblock The entropy of {$\alpha$}-continued fractions: numerical results.
\newblock {\em Nonlinearity}, 23(10):2429--2456, 2010.

\bibitem[CT12]{CT12}
C.~Carminati and G.~Tiozzo.
\newblock A canonical thickening of {$\Bbb Q$} and the entropy of
  {$\alpha$}-continued fraction transformations.
\newblock {\em Ergodic Theory Dynam. Systems}, 32(4):1249--1269, 2012.

\bibitem[CT13]{CT13}
C.~Carminati and G.~Tiozzo.
\newblock Tuning and plateaux for the entropy of {$\alpha$}-continued
  fractions.
\newblock {\em Nonlinearity}, 26(4):1049--1070, 2013.

\bibitem[DHKM12]{DHKM12}
K.~Dajani, D.~Hensley, C.~Kraaikamp, and V.~Masarotto.
\newblock Arithmetic and ergodic properties of `flipped' continued fraction
  algorithms.
\newblock {\em Acta Arith.}, 153(1):51--79, 2012.

\bibitem[DK00]{DK00}
K.~Dajani and C.~Kraaikamp.
\newblock {T}he mother of all continued fractions.
\newblock {\em Colloq. Math.}, 84/85(part 1):109--123, 2000.
\newblock Dedicated to the memory of Anzelm Iwanik.

\bibitem[DK17]{DK}
K.~Dajani and C.~Kalle.
\newblock Invariant measures, matching and the frequency of 0 for signed binary
  expansions.
\newblock arXiv: 1703.06335, 2017.

\bibitem[DKS09]{DKS09}
K.~Dajani, C.~Kraaikamp, and W.~Steiner.
\newblock Metrical theory for {$\alpha$}-{R}osen fractions.
\newblock {\em J. Eur. Math. Soc. (JEMS)}, 11(6):1259--1283, 2009.

\bibitem[Haa02]{Ha02}
Andrew Haas.
\newblock Invariant measures and natural extensions.
\newblock {\em Canad. Math. Bull.}, 45(1):97--108, 2002.

\bibitem[HK02]{HK02}
Y.~Hartono and C.~Kraaikamp.
\newblock On continued fractions with odd partial quotients.
\newblock {\em Rev. Roumaine Math. Pures Appl.}, 47(1):43--62 (2003), 2002.

\bibitem[Kal14]{Kal}
C.~Kalle.
\newblock Isomorphisms between positive and negative {$\beta$}-transformations.
\newblock {\em Ergodic Theory Dynam. Systems}, 34(1):153--170, 2014.

\bibitem[Kra91]{Kra91}
C.~Kraaikamp.
\newblock A new class of continued fraction expansions.
\newblock {\em Acta Arith.}, 57(1):1--39, 1991.

\bibitem[Kre67]{Kre67}
U.~Krengel.
\newblock Entropy of conservative transformations.
\newblock {\em Z. Wahrscheinlichkeitstheorie und Verw. Gebiete}, 7:161--181,
  1967.

\bibitem[KS12]{KS12}
C.~Kalle and W.~Steiner.
\newblock Beta-expansions, natural extensions and multiple tilings associated
  with {P}isot units.
\newblock {\em Trans. Amer. Math. Soc.}, 364(5):2281--2318, 2012.

\bibitem[KSS12]{KSS12}
C.~Kraaikamp, T.~Schmidt, and W.~Steiner.
\newblock Natural extensions and entropy of {$\alpha$}-continued fractions.
\newblock {\em Nonlinearity}, 25(8):2207--2243, 2012.

\bibitem[KU10]{KU10}
S.~Katok and I.~Ugarcovici.
\newblock Structure of attractors for {$(a,b)$}-continued fraction
  transformations.
\newblock {\em J. Mod. Dyn.}, 4(4):637--691, 2010.

\bibitem[Lew81]{Lew}
L.~Lewin.
\newblock {\em Polylogarithms and associated functions}.
\newblock North-Holland Publishing Co., New York-Amsterdam, 1981.

\bibitem[LM08]{LM08}
L.~Luzzi and S.~Marmi.
\newblock On the entropy of {J}apanese continued fractions.
\newblock {\em Discrete Contin. Dyn. Syst.}, 20(3):673--711, 2008.

\bibitem[MMY97]{MMY97}
S.~Marmi, P.~Moussa, and J.C. Yoccoz.
\newblock The {B}rjuno functions and their regularity properties.
\newblock {\em Comm. Math. Phys.}, 186(2):265--293, 1997.

\bibitem[Nak81]{Nak81}
H.~Nakada.
\newblock Metrical theory for a class of continued fraction transformations and
  their natural extensions.
\newblock {\em Tokyo J. Math.}, 4(2):399--426, 1981.

\bibitem[NN08]{NN08}
H.~Nakada and R.~Natsui.
\newblock The non-monotonicity of the entropy of {$\alpha$}-continued fraction
  transformations.
\newblock {\em Nonlinearity}, 21(6):1207--1225, 2008.

\bibitem[Per57]{Per13}
O.~Perron.
\newblock {\em Die {L}ehre von den {K}ettenbr\"uchen. {D}ritte, verbesserte und
  erweiterte {A}ufl. {B}d. {II}. {A}nalytisch-funktionentheoretische
  {K}ettenbr\"uche}.
\newblock B. G. Teubner Verlagsgesellschaft, Stuttgart, 1957.

\bibitem[Roh61]{Roh61}
V.~A. Rohlin.
\newblock Exact endomorphisms of a {L}ebesgue space.
\newblock {\em Izv. Akad. Nauk SSSR Ser. Mat.}, 25:499--530, 1961.

\bibitem[Sch04]{Sch04}
B.~Schratzberger.
\newblock {$S$}-expansions in dimension two.
\newblock {\em J. Th\'{e}or. Nombres Bordeaux}, 16(3):705--732, 2004.

\bibitem[Sil88]{Si88}
C.~E. Silva.
\newblock On {$\mu$}-recurrent nonsingular endomorphisms.
\newblock {\em Israel J. Math.}, 61(1):1--13, 1988.

\bibitem[ST91]{ST91}
C.~E. Silva and P.~Thieullen.
\newblock The subadditive ergodic theorem and recurrence properties of
  {M}arkovian transformations.
\newblock {\em J. Math. Anal. Appl.}, 154(1):83--99, 1991.

\bibitem[Tha83]{Tha83}
M.~Thaler.
\newblock Transformations on {$[0,\,1]$} with infinite invariant measures.
\newblock {\em Israel J. Math.}, 46(1-2):67--96, 1983.

\bibitem[Tio14]{T14}
G.~Tiozzo.
\newblock The entropy of {N}akada's {$\alpha$}-continued fractions: analytical
  results.
\newblock {\em Annali della Scuola Normale Superiore di Pisa},
  13(4):1009--1037, 2014.

\bibitem[Zwe00]{Zwe00}
R.~Zweim\"{u}ller.
\newblock Ergodic properties of infinite measure-preserving interval maps with
  indifferent fixed points.
\newblock {\em Ergodic Theory Dynam. Systems}, 20(5):1519--1549, 2000.

\end{thebibliography}
\end{document}